\numberwithin{equation}{section}
\theoremstyle{plain}
\newtheorem{theorem}{Theorem}[section]
\newtheorem{lemma}[theorem]{Lemma}
\newtheorem{corollary}[theorem]{Corollary}
\newtheorem{proposition}[theorem]{Proposition}
\theoremstyle{definition}
\newtheorem{remark}[theorem]{Remark}
\newtheorem{definition}[theorem]{Definition}
\DeclareMathOperator{\Var}{Var}
\DeclareMathOperator{\cW}{\mathcal{W}}
\DeclareMathOperator{\cV}{\mathcal{V}}
\DeclareMathOperator{\utr}{\theta}
\DeclareMathOperator{\ntr}{\hat{\theta}}
\DeclareMathOperator{\cU}{\mathcal{U}}
\newcommand{\clf}{\mathcal{F}}
\newcommand{\clp}{\mathcal{P}}
\newcommand{\PP}{\mathbb{P}}
\newcommand{\RR}{\mathbb{R}}
\newcommand{\NN}{\mathbb{N}}
\begin{document}

\begin{frontmatter}

\title{Strong existence, pathwise uniqueness and chains of collisions in infinite Brownian particle systems}
\runtitle{Wellposedness of Singular SDE}

\begin{aug}
\author[A]{\inits{SB}\fnms{Sayan}~\snm{Banerjee}\ead[label=e1]{sayan@email.unc.edu}},
\author[A]{\inits{AB}\fnms{Amarjit}~\snm{Budhiraja}\ead[label=e2]{budhiraj@email.unc.edu}}
\and
\author[B]{\inits{PR}\fnms{Peter}~\snm{Rudzis}\ead[label=e3]{peter.rudzis@gu.se}}
\address[A]{Department of Statistics and Operations Research, UNC Chapel Hill\printead[presep={,\ }]{e1,e2}}

\address[B]{Department of Mathematical Sciences, University of Gothenburg\printead[presep={,\ }]{e3}}
\end{aug}

\begin{abstract}
We study strong existence and pathwise uniqueness for a class of  infinite-dimensional singular stochastic differential equations (SDE), with state space as the  cone $\{x \in \RR^{\NN}: -\infty < x_1 \le x_2 \le \cdots\}$, referred to as an {\em infinite system of  competing Brownian particles}. A `mass' parameter taking values in the interval $[0,1]$ governs the splitting proportions of the singular collision local time between adjacent state coordinates. Solutions in the case when this parameter is $1/2$ correspond to the well-studied rank-based diffusions, while the general case arises from scaling limits of interacting particle systems on the lattice with asymmetric interactions and the study of the KPZ equation. Under conditions on the initial configuration, the drift vector, and the growth of the local time terms, we establish pathwise uniqueness and strong existence of solutions to the SDE. A key observation is  the connection between pathwise uniqueness and the finiteness of `chains of collisions' between adjacent particles influencing a tagged particle in the system. Ingredients in the proofs include classical comparison and monotonicity arguments for reflected Brownian motions, techniques from Brownian last-passage percolation, large deviation bounds for random matrix eigenvalues, and concentration estimates for extrema of Gaussian processes.
\end{abstract}

\begin{abstract}[language=french]
Nous étudions l’existence forte et l’unicité trajectorielle pour une classe d’équations différentielles stochastiques (EDS) singulières en dimension infinie, dont l’espace d’état est le cône $\{x \in \RR^{\NN} : -\infty < x_1 \le x_2 \le \cdots\}$, appelé un \emph{système infini de particules browniennes en compétition}. Un paramètre de «~masse~» prenant ses valeurs dans l’intervalle \([0,1]\) gouverne les proportions de partage du temps local de collision singulier entre coordonnées adjacentes de l’état. Les solutions dans le cas où ce paramètre vaut \(1/2\) correspondent aux diffusions bien connues basées sur le rang, tandis que le cas général apparaît comme limite d’échelle de systèmes de particules en interaction sur le réseau avec des interactions asymétriques, ainsi que dans l’étude de l’équation KPZ. Sous des conditions portant sur la configuration initiale, le vecteur de dérive et la croissance des termes de temps local, nous établissons l’unicité trajectorielle et l’existence forte des solutions de l’EDS. Une observation clé est le lien entre l’unicité trajectorielle et la finitude des «~chaînes de collisions~» entre particules adjacentes influençant une particule marquée du système. Les preuves font intervenir des arguments classiques de comparaison et de monotonie pour les mouvements browniens réfléchis, des techniques issues de la percolation brownienne de dernier passage, des bornes de grandes déviations pour les valeurs propres de matrices aléatoires, ainsi que des estimations de concentration pour les extrema de processus gaussiens.
\end{abstract}

\begin{keyword}[class=MSC]
\kwd[Primary]{ 60J60}
\kwd{60K35}
\kwd{60J25}
\kwd{60H10}
\end{keyword}

\begin{keyword}
\kwd{rank-based diffusions}
\kwd{infinite Atlas model}
\kwd{strong solutions}
\kwd{pathwise uniqueness}
\kwd{singular stochastic differential equations}
\kwd{Brownian last-passage percolation}
\end{keyword}

\end{frontmatter}


\section{Introduction}

In this article, we investigate the strong existence and pathwise uniqueness of solutions to the infinite system of stochastic differential equations
\begin{equation} \label{eq:loctimeeq0}
X_j(t) = x_j + g_jt + B_j(t) + p L_{(j-1,j)}(t) - q L_{(j, j+1)}(t), \quad t \in [0,\infty), \quad j \in \mathbb{N}.
\end{equation}
Here, $\bar{x} = \{x_j\}_{j \in  \mathbb{N}}$ is a non-decreasing sequence in $\mathbb{R}$, $p = 1-q \in [0,1]$, $\bar{g} = \{g_j\}_{j \in \mathbb{N}} \in \mathbb{R}^{\mathbb{N}}$, $\{B_j\}_{j \in \mathbb{N}}$ is a sequence of independent one-dimensional standard Brownian motions, and $L_{(j,j+1)}$ denotes the local time of collisions between the $j$th and $(j+1)$st particles (with $L_{(0,1)} \equiv 0$), which preserves the particle order: $X_j(t) \le X_{j+1}(t)$ for all $t \ge 0, j \in \mathbb{N}$. These equations describe the collective motion of an infinite system of interacting Brownian particles, where two particles, upon collision, maintain their order on the real line but undergo a possibly unequal exchange of `momentum' (which can be heuristically thought of as the particles having unequal `masses'). 

The study of the well-posedness of the system \eqref{eq:loctimeeq0} is motivated in part by its connections to rank-based diffusions, asymmetric exclusion processes, and other well-studied models in statistical physics, reviewed below. Our results pave the way for direct analyses of infinite systems of Brownian particles interacting via collision local times, without recourse to particular solution frameworks such as finite-particle approximations. They also lay the groundwork for a forthcoming paper showing that, in the weakly asymmetric regime, the appropriately rescaled fluctuation field of the corresponding doubly infinite (i.e.\ $\mathbb{Z}$-indexed) particle system converges to a solution to the Kardar–Parisi–Zhang (KPZ) equation.

\subsection{Background}
Systems described by \eqref{eq:loctimeeq0} first arose in the case $p=1/2$ in the context of \emph{rank-based diffusions} in stochastic portfolio theory \cite{fernholz2002stochastic,banner2005atlas,fernholz2009stochastic, banner2011hybrid}. In such systems, the drift and diffusivity of each particle (representing the {market capitalization of a stock}) depend on the order of the particles on the real line. It is described by a system of equations of the following form
\begin{equation}\label{RBD}
Y_j(t) = x_j + \int_0^t\sum_{\ell=1}^{N}1_{[Y_{(\ell)}(s)=Y_j(s)]}\left(g_{\ell}ds + \sigma_{\ell}\dd W_j(s)\right),\quad t \in [0,\infty), \quad 1 \leq j \leq N.
\end{equation}
Here $N$ can be any finite positive integer or infinite (where in the latter case, it is understood that $j$ ranges over $\mathbb{N}$), $Y_{(1)}(s) \leq Y_{(2)}(s) \leq Y_{(3)}(s) \leq \cdots$ denote the order statistics at time $s$ of the particle positions $\{Y_j\}$ (provided such an ordering exists)
, $\{W_j\}$ are independent standard Brownian motions, and $g_\ell \in \mathbb{R}$ and $\sigma_{\ell}>0$ denote the drift and diffusivity, respectively, of the $\ell$th order statistic. We refer to $\{Y_j\}$ (resp. $\{Y_{(j)}\}$) as the state process for the {\em labeled} (resp. {\em ranked}) particle system. When $\sigma_\ell=1$ for all $\ell$, it can be verified \cite[Lemma 3.5]{sarantsev2017infinite}, under conditions on $(\bar x, \bar g)$, that the dynamics of the ranked trajectories $\{X_j = Y_{(j)}\}$ can be expressed in the form \eqref{eq:loctimeeq0} with $p=1/2$, $$B_j(t) = \int_0^t\sum_{\ell=1}^{\infty}1_{[Y_{(j)}(s)=Y_\ell(s)]}\dd W_\ell(s), \;\; t \geq 0,$$ and $L_{(j,j+1)}$ taken as the semimartingale local time of $X_{j+1} - X_{j}$ at the origin. Equation \eqref{RBD} is non-standard and interesting from the point of view of the classical theory of stochastic differential equations because the drift and diffusivity of a particle vary in a discontinuous manner as its rank changes in the system. Consequently, rank-based diffusions have been a subject of extensive research. Results on existence and uniqueness of solutions to \eqref{RBD} (in both the weak and strong sense, in finite and infinite particle settings) and on absence of triple collisions (three particles at the same place at the same time) have been studied in \cite{bass1987uniqueness,PP,sarantsev2017infinite,shkol2011levy, ichiba2013strong, ichiba2010collisions,sarantsev2015triple,ichiba2017yet,fernholz2013planar}.


In the finite particle setting, the `order statistics description' \eqref{eq:loctimeeq0} has proven more tractable than \eqref{RBD}, as one can in this case appeal to the extensive theory of reflected Brownian motions (RBM). In particular, when the number of particles is $N < \infty$, the \emph{gap process} $Z_j := X_{j+1} - X_j, 1 \le j \le N-1,$ is a RBM in the orthant $\RR_+^{N-1}$, and the powerful techniques of \cite{HR,harrison1987multidimensional,harrison1987brownian} can be used to analyze the existence, uniqueness and detailed pathwise behavior of solutions to \eqref{eq:loctimeeq0}. Moreover, under the so-called Harrison-Williams stability conditions, such RBM have a unique stationary distribution which, under additional {\em skew-symmetry} conditions on the coefficients, is given by a product of exponential random variables \cite{harrison1987multidimensional}. Rates of convergence to stationarity have been investigated in \cite{budlee, ichiba2013convergence,blanchet2020rates,BanBudh,banerjee2020dimension}. By careful comparison with appropriately coupled finite particle systems, significant progress has also been made in identifying and studying the stationary distributions of the gap process of solutions to the infinite particle system \eqref{eq:loctimeeq0}  (constructed in an appropriate sense) {\cite{PP,sarantsev2017stationary,tsai_stat} (see also \cite{RuzAiz})}. For example, when $p=1/2$, under conditions on $\bar g$, the gap process in the $N = \infty$ case has a one-parameter family of product form stationary distributions \cite{sarantsev2017stationary} that are extremal \cite{banerjee2024extremal} and have an intricate local stability structure and whose associated `domains of attraction' have been studied in \cite{DJO,banerjee2022domains}.  

The case $p \neq 1/2$ has gained more attention recently. It was shown in \cite{karatzas2016systems} that \eqref{eq:loctimeeq0}, in the setting of finitely many particles, arises as a scaling limit of \emph{asymmetric simple exclusion processes (ASEP)} on the lattice.  Similar results are also expected to  hold, in a suitable sense, for infinite particle systems. These models are conjectured to be in the \emph{Kardar-Parisi-Zhang (KPZ)} universality class \cite{WFSbook,SS}, an intensive area of modern research in probability {(cf. \cite{corwin2012kardar,dembo2016weakly,
tsai2016weak} and references therein)}. Roughly speaking, systems in the KPZ universality class exhibit non-Gaussian temporal fluctuations with scaling exponent $1/3$ (compared to $1/2$ for Gaussian). The appropriately rescaled fluctuations are conjectured (and in a few cases proven) to converge to a non-linear stochastic partial differential equation known as the KPZ equation \cite[Equation (1.0.1)]{WFSbook}. When $p=0$, this connection was rigorously established in \cite{WFSbook} with specific initial particle configurations. Among other achievements, \cite{WFSbook} introduced techniques from integrable systems in statistical physics, last-passage percolation and random matrix theory to the world of RBM. When $p \in (0,1), p \neq 1/2$, the model is still so-called Bethe integrable, but less tractable. It has been partially investigated in \cite{SS}, and asymptotic analysis has been carried out for the half-Poisson initial condition. See Remark \ref{SSrem} for some comments on the relation of this paper with our work.
Another connection with our model may be found in the \textit{multilevel Dyson model} with parameter $\beta = 2$, whose left edge evolves (up to a sign change) according to the finite version of the system \eqref{eq:loctimeeq0} with $p = 0$ \cite{warren2007, gorshkol2016MD}. A consequence is that the marginal distribution at any time $t > 0$ of this system, started from packed initial conditions (all particles at the same site), coincides with that of a GUE-corners process, a model which shows up as a universal scaling limit for many two-dimensional statistical mechanics models (see references in \cite[Section 1.1]{gorshkol2014Jack}). For the finite particle system, when $p \neq 1/2$, `dimension-free' convergence phenomena were established in \cite{banerjee2020dimension}, which showed that a fixed collection of gaps converge to their stationary marginals at a rate independent of the total number of particles. Asymptotics of high moments of solutions to the Stochastic Heat Equation with multiplicative noise, and consequent upper tail large deviations of the KPZ equation, were recently connected with rank-based diffusions (i.e. \eqref{eq:loctimeeq0} with $p=1/2$) in \cite{tsai2025high}.

\subsection{Goals and Challenges}

Somewhat surprisingly, despite having detailed results on pathwise properties of solutions (if they exist), the basic wellposedness of solutions to the infinite particle system \eqref{eq:loctimeeq0} is not well-understood. Strong existence and pathwise uniqueness for the finite particle version follows from classical results for the Skorohod problem (see Remark \ref{rem:veryunique}). When $p \ge 1/2$, one can appeal to monotonicity properties of finite particle systems \cite{AS2} to obtain strong solutions to \eqref{eq:loctimeeq0}, known as \emph{approximative solutions (versions)}, by taking a pathwise limit of the finite particle systems uniformly on compact time intervals \cite{sarantsev2017infinite}. However, to date, it was not known whether there is uniqueness (even in law) for systems of the form \eqref{eq:loctimeeq0}.
Moreover, approximative solutions in \cite{sarantsev2017infinite} were constructed through comparison techniques applied to the model with $p=1/2$, yielding results only when $p \ge 1/2$. When $p \in (0,1/2)$, even existence of solutions to \eqref{eq:loctimeeq0} has been an open problem. Strong existence has been established when $p=0$ in \cite{WFSbook} using last-passage percolation techniques.
Making progress on the above open questions is the main goal of this work. 

A key technical challenge involved in addressing such questions lies in quantifying the influence of the high-ranked particles on the low-ranked particles. When the particles are initially closely packed, there could possibly be infinite \emph{chains of collisions} in a compact time interval, making it impossible to `decouple' (in a suitable sense) the dynamics of a finite number of particles from the infinite-dimensional system on this interval. 
For the case $p=1/2$, when one considers the evolution of the labeled particle system given by \eqref{RBD},  such a decoupling can be analyzed under the assumption that the drift and diffusivity of all particles agree after a certain rank (see \cite[Assumption 1]{ichiba2013strong} and \cite[Theorem 3.1]{sarantsev2017infinite}), or the diffusivity coefficients are all constant and drift coefficients decay sufficiently quickly as the particle ranks increase (\cite[Theorem 3.2]{sarantsev2017infinite}). This has been the key to establishing existence of strong solutions and pathwise uniqueness for solutions to \eqref{RBD} in \cite{ichiba2013strong}
, and initial conditions that are not too closely packed, specifically, when $x_i \ge \gamma_1 i + \gamma_2, i \in \NN,$ for some $\gamma_1>0, \gamma_2 \in \RR$. 
However,  this analysis is restricted to the  setting of the labeled particle system given by  \eqref{RBD} (corresponding to the case $p=1/2$ in the associated system of ordered particles \eqref{eq:loctimeeq0}), in which the absence of collision local times enables one to analyze such a decoupling by using the basic observation that for $Y_j$ to influence $Y_1$ on a compact time interval, at least one of the driving Brownian motions $W_j$ and $W_1$ must exhibit a large deviation from its typical behavior on this interval, the probability of which can be explicitly quantified. Furthermore, for existence when $p=1/2$, the correspondence between labeled and ranked particle systems    provides existence of weak solutions to \eqref{eq:loctimeeq0} using results of \cite{ichiba2013strong} in an easy manner. However, these results are not immediately useful for proving existence of strong solutions to \eqref{eq:loctimeeq0}, since given a sequence of Brownian motions $\{B_j\}$  for which  a solution to  \eqref{eq:loctimeeq0} is sought, it is not straightforward to construct Brownian motions $\{W_j\}$ driving the evolution of the corresponding labeled particle system.

\subsection{Our contributions} In this paper, we establish the strong existence and pathwise uniqueness of solutions to \eqref{eq:loctimeeq0}, under appropriate assumptions, for all $p \in [0,1]$. Our results on pathwise uniqueness are stated in Theorems \ref{thm:unique1}, \ref{thm:unique2}, and \ref{thm:unique3}. When $p<1/2$, we show in Theorem \ref{thm:unique1} that solutions to \eqref{eq:loctimeeq0} are essentially always pathwise unique. Although, surprisingly, we do not require any assumption on the initial configuration, in Lemma \ref{lem:biginf2}, we observe that for a strong solution to exist when $p<1/2$, the initial data must satisfy $
\liminf_{M \to \infty} \frac{x_M}{\sqrt{M}} = \infty
$ almost surely (see Remark \ref{plinitial}). When $p \ge 1/2$, we establish pathwise uniqueness in Theorem \ref{thm:unique2} under the assumption $\liminf_{M \to \infty} \frac{x_M}{\sqrt{M}} >0$ almost surely and an additional assumption \eqref{eq:cncstar} on the local times. This condition requires, roughly speaking, that the net effect of the local times on the $j$th particle does not grow too rapidly as $j$ grows, uniformly in $t$ ranging in a compact interval. Proposition \ref{prop:lcond}(i) shows \eqref{eq:cncstar} holds for approximative solutions constructed in \cite{sarantsev2017infinite}, which suggests that this growth condition on the local times  is natural for the problem. For the case $p>1/2$, we also formulate a somewhat different growth condition \eqref{eq:cncstar2} on local times that is easier to interpret, simply requiring that the local times $L_{(M, M+1)}$ grow at most at a sufficiently small exponential rate in $M$. This condition is even simpler to verify for approximative versions (see the proof of Proposition \ref{prop:lcond}(ii)). We show in Theorem \ref{thm:unique3} that if the initial conditions satisfy $\limsup_{M \to \infty} \frac{x_M}{\sqrt{M}} =\infty$, then pathwise uniqueness holds for $p>1/2$ among solutions satisfying the growth condition \eqref{eq:cncstar2} on local times. Note that the two types of conditions on the initial data mentioned above are not directly comparable.

We show in Theorem \ref{thm:exist} that, when $p < 1/2$,  strong existence of solutions to \eqref{eq:loctimeeq0} holds if the distribution $\gamma$ of the initial data $\bar x =  \{x_j\}_{j \in  \mathbb{N}}$ satisfies $\sum_{k = 1}^\infty \gamma(x_k \leq k^\chi) < \infty$ for some $\chi>1/2$. 
In fact our results show that this solution is an approximative solution of \eqref{eq:loctimeeq0} in the sense of \cite{sarantsev2017infinite}.
As was noted earlier, approximative solutions were previously known to hold only for $p\ge 1/2$ and in fact there are no prior existence results on strong solutions (approximative or any other type) in the case $0<p < 1/2$. 
The proof of Theorem \ref{thm:exist} relies on the explicit form of a strong solution in the $p=0$ case given in \cite{WFSbook}  along with comparison techniques of \cite{sarantsev2017infinite} and Girsanov's theorem.  For the case $p\ge 1/2$, strong solutions are known to exist from \cite{sarantsev2017infinite} which shows that  solutions  to \eqref{eq:loctimeeq0} can be constructed as approximative versions   when the initial distribution $\gamma$ satisfies  $\sum_{j = 1}^\infty e^{-c(x_j \vee 0)^2} < \infty$ $\gamma$-almost surely, for some $c>0$. 

The above existence and uniqueness results are combined in Corollary \ref{cor:eusummary} to give succinct conditions for strong existence and pathwise uniqueness. In particular, our results characterize the strong solutions of \eqref{eq:loctimeeq0} with certain properties as the approximative versions of \cite{sarantsev2017infinite}, and thus, when $p \geq 1/2$, previous local finiteness and comparison results obtained for approximative versions  (e.g. Lemma 3.8 and Corollaries 3.10-3.12 of \cite{sarantsev2017infinite}) hold for any strong solution whose initial configuration and local times satisfy appropriate conditions.

{In Remark \ref{SSrem} we will discuss connections with the model studied in \cite{SS}, where, under the assumption of uniqueness of solutions of an equation as in \eqref{eq:loctimeeq0}, contour integral and Fredholm determinant formulas were derived for particle counting statistics and used to establish KPZ type asymptotics. In particular, we show that the half-Poisson initial data considered in \cite{SS} falls within our framework, and our results provide a proof of wellposedness for the stochastic equation studied there.}

\subsection{Proof techniques} 
Beyond new existence and uniqueness results for a class of infinite-dimensional singular diffusions, the techniques developed in this paper connect classical methods for analyzing RBM using monotonicity arguments with those arising from the study in last-passage percolation and random matrix theory in novel ways. As discussed before, the key quantities of  interest in the proofs are certain chains of collisions. For $i \in \NN$ and $0 \leq u < v < \infty$, the \emph{maximal length of a collision chain} affecting the $i$th particle from above on the time interval $[u,v]$ is captured by the quantity
\[
\begin{split}
K^*(i,[u,v]) = \sup\{k \geq 0 & : \exists u \leq s_{i + k - 1} \leq \cdots \leq s_{i + 1} \leq s_i \leq v \\
& \text{ such that, for } i \leq j \leq i + k - 1, X_{j+1}(s_j) = X_j(s_j)\}.
\end{split}
\]
As shown in Lemma \ref{lem:unique}, almost sure finiteness of the above quantity leads to pathwise uniqueness, as in such a case one can appropriately decouple the dynamics of a finite number of particles from the infinite system. This fact has an intuitive explanation, namely that when $K^*$ is finite, the causal structure of the particle system is such that the collision-mediated effects of only finitely many particles can reach a tagged particle in a given time interval. Proving finiteness of $K^*$ is therefore one of the key technical ingredients in our proofs.
In Section \ref{ssec:ptwise}, we obtain upper and lower bounds on particle trajectories in terms of \emph{Brownian last-passage percolation quantities} of the form defined in \eqref{eq:VMdef}, \eqref{eq:VMplusdef} and \eqref{eq:WMdef}. Probability estimates on these quantities are obtained in Section \ref{ssec:lpp}. The main ingredients here are large deviation bounds on largest eigenvalues of random matrices, probability concentration estimates on infima of Gaussian processes, and other Gaussian process estimates.
In Section \ref{sec:pfthm2.6}, the probability estimates of Section \ref{ssec:lpp} are applied to prove finiteness of $K^*$, which is then used to complete the proof of Theorems \ref{thm:unique1}, \ref{thm:unique2}, and \ref{thm:unique3}. This section is the technical heart of the paper which brings together the various monotonicity estimates from Section \ref{ssec:ptwise} and Brownian last-passage percolation estimates from Section \ref{ssec:lpp}. Section \ref{ssec:approximative} shows that the approximative versions  satisfy the conditions \eqref{eq:cncstar} and \eqref{eq:cncstar2} on the local times. Finally, in Section \ref{proofexist}, we prove our main result on strong existence (Theorem \ref{thm:exist}).

\section{Strong existence and pathwise uniqueness}
\label{sec:sepu}
In examining the system \eqref{eq:loctimeeq0}, we will take the collision parameter $p$ to lie in the half-open interval $[0,1)$. Note that if $p = 1$, then the $k$th particle does not influence the $j$th particle for any $k > j$ and so the system can be analyzed in an elementary recursive fashion (see e.g.\ Sections 2.1-2.2 of \cite{WFSbook}). Consequently, the system when $p = 1$ is well understood and considerably simpler than when $p \neq 1$ and will not be further discussed in this work.


Fix parameters $p \in [0,1)$, $N \in \mathbb{N} \cup \{\infty\}$, and $\bar{g} = \{g_j\}_{j \in \mathbb{N}} \in \mathbb{R}^N$. Let $q=1-p$. Let $\{B_j(t), t \in [0,\infty), j \in \mathbb{N}\}$ be a collection of independent standard Brownian motions given on a filtered probability space $(\Omega, \clf, \{\clf_t\}_{t\ge 0}, \PP)$  such that $\{B_i(t+\cdot)- B_i(t), i \in \NN\}$ is independent of $\clf_t$ for all $t\ge 0$. 
Let $\bar{x} = \{x_j\}_{j \in \mathbb{N}}$ be a non-decreasing sequence of $\RR$-valued $\clf_0$-measurable random variables.

If $N < \infty$, then we let $[N]$ denote the set of all integers 1 through $N$, inclusive. If $N = \infty$, we adopt the convention that $[N] = \mathbb{N}$.

\begin{definition}[Systems of competing Brownian particles] \label{def:cbp} \normalfont

Suppose $\{X_j(t), t \in [0,\infty)\}_{j \in [N]}$ and $\{L_{(j,j+1)}(t), t \in [0,\infty)\}_{j \in \{0\} \cup [N]}$ are collections of continuous, real-valued processes adapted to $\{\mathcal{F}_t\}$ and having the following properties:
\begin{enumerate}[label=P\arabic*,ref=P\arabic*]
\item \label{pr:p1} For all $t \in [0,\infty), j \in [N]$,
\begin{equation} \label{eq:loctimeeq}
X_j(t) = x_j + g_jt + B_j(t) + p L_{(j-1,j)}(t) - q L_{(j, j+1)}(t).
\end{equation} 
\item \label{pr:p2} For all $t \in [0,\infty)$, the sequence $\{X_j(t)\}_{j \in [N]}$ is non-decreasing. 
\item \label{pr:p3} $L_{(0,1)} \equiv 0$, $L_{(N,N+1)} \equiv 0$ if $N < \infty$, and for $1 \leq j < N$, $L_{(j,j+1)}$ is a continuous, non-decreasing process, with 
$L_{(j,j+1)}(0)=0$,
which can increase only at times $t \in [0,\infty)$ such that $X_{j+1}(t) = X_j(t)$, i.e. 
\begin{equation} \label{eq:restricted_inc} 
\int_0^\infty (X_{j+1}(t) - X_j(t)) \dd L_{(j,j+1)}(t) = 0.
\end{equation} 
\end{enumerate}
If $N < \infty$ (resp.\ $N = \infty$), we refer to the collection $\{X_j\}_{j \in [N]}$ as a \textit{a system of $N$ (resp.\ an infinite system of) competing Brownian particles with parameters $p, \bar{g}$, initial conditions $\bar{x}$, and driving Brownian motions $\{B_j\}_{j \in \mathbb{N}}$.} The collection $\{L_{(j,j+1)}\}$ will be referred to as the local times associated with the solution. Somewhat more informally, when we refer to a \textit{solution} to \eqref{eq:loctimeeq}, we mean a process $\{X_j(t),t \in [0,\infty), j \in [N]\}$ given on a filtered probability space as above, with driving noises $\{B_j(t), t \in [0,\infty), j \in \mathbb{N}\}$ and initial data   $\bar{x}$, such that properties \ref{pr:p1}-\ref{pr:p3} hold.

\end{definition}

We also introduce the processes
\begin{equation} \label{eq:Vjdef}
V_j(t) = g_j t + B_j(t), \quad j \in [N], \quad t \in [0,\infty).
\end{equation}
In the terminology of \cite{AS2}, a solution $\{X_j\}_{j \in \mathbb{N}}$ to \eqref{eq:loctimeeq} is a system of $N$ (or an infinite system of) competing particles with \textit{driving function} $\overline{V} = \{x_j + V_j\}_{j \in \mathbb{N}}$ and \textit{parameters of collision} $p,q$.

Let $\clp_0(\RR^{\infty})$ be the collection of all probability measures $\gamma$ on $\RR^{\infty}$ such that, for $\gamma$-a.e. $\bar z = \{z_j\}_{j \in \NN}$, $z_k \le z_{k+1}$ for every $k \in \NN$.
\begin{definition}[Strong Existence] \label{def:strong_exist} \normalfont
For a given selection of parameters $p,\bar{g}$, and a probability measure $\gamma \in \clp_0(\RR^{\infty})$, we say that there \textit{exists a strong solution} to \eqref{eq:loctimeeq} with initial distribution $\gamma$, if, for any choice of a filtered probability space,   Brownian motions $\{B_j\}$, and
$\bar x$ as above, with $\bar x$ distributed as $\gamma$, there exist collections of continuous $\clf_t$-adapted processes $\{X_j\}$ and $\{L_{(j,j+1)}\}$ satisfying \ref{pr:p1}-\ref{pr:p3}. 
\end{definition}

\begin{definition}[Pathwise Uniqueness] \label{def:pathwise_unique} \normalfont
We say that solutions to \eqref{eq:loctimeeq} are \textit{pathwise unique} with initial distribution $\gamma \in \clp_0(\RR^{\infty})$, if, whenever $\{X_j\}$ and $\{X'_j\}$ are two strong solutions to \eqref{eq:loctimeeq} with the same driving Brownian motions $\{B_j\}$, same initial distribution $\gamma$ and $X_j(0) = X'_j(0)$ for $j \in \NN$,   then, almost surely, for all $j \in \mathbb{N}$ and $t \in [0,\infty)$, $X_j(t) = X_j'(t)$. More generally, we will talk about pathwise uniqueness among solutions satisfying some property (P), by which we mean that if, whenever $\{X_j\}$ and $\{X'_j\}$ are two strong solutions to \eqref{eq:loctimeeq} with initial distribution $\gamma$, satisfying property (P), and $X_j(0) = X'_j(0)$ for $j \in \NN$,  then, almost surely, for all $j \in \mathbb{N}$ and $t \in [0,\infty)$, $X_j(t) = X_j'(t)$.
\end{definition}

\begin{remark} \label{rem:veryunique} \normalfont

If $N < \infty$, then strong solutions to the system of $N$ competing Brownian particles are known to exist and to be pathwise unique for any choice of  $p \in [0,1]$, $\bar{g}$ and initial distribution \cite[Section 2.1]{karatzas2016systems}. In this case, the solution is adapted to the filtration generated by the Brownian motion and the initial condition (which is contained in the filtration $\{\clf_t\}$). In fact, existence and uniqueness holds in an even more general sense, namely, if $\tau$ is any $[0,\infty]$-valued random variable, then there is an a.s. unique process $\{X_j\}_{j \in [N]}$ defined on $[0, \tau)$ with continuous sample paths 
which satisfies P1-P3 of  Definition \ref{def:cbp}, with associated local times $\{L_{(j,j+1)}\}$, where each instance of the time interval $[0,\infty)$ is replaced by $[0,\tau)$. This  follows from the pathwise construction of the solution from the solution to a Skorokhod problem in the positive orthant $\mathbb{R}_+^N$ (see \cite[Lemma 2.4]{AS2}). 

\end{remark}

For the rest of this work, we focus  on the case $N = \infty$. In the statements below, for sequences $\bar{g} = \{g_j\}_{j \in \mathbb{N}} \in \mathbb{R}^\infty$, we let $|\bar{g}|_2 = (\sum_{j \in \mathbb{N}} g_j^2)^{1/2}$ and $|\bar{g}|_\infty = \sup_{j \in \mathbb{N}} |g_j|$. We write $\bar{g} \in \ell^2(\mathbb{N})$ if $|\bar{g}|_2 < \infty$, and we write $\bar{g} \in \ell^\infty(\mathbb{N})$ if $|\bar{g}|_\infty < \infty$.
We first state our main results on pathwise uniqueness of solutions to \eqref{eq:loctimeeq}. For clarity, we state three separate theorems for different choices of the parameters $p$ and $q$, although parts of the proofs overlap substantially. When $p < q$, pathwise uniqueness holds in great generality.
\begin{theorem} \label{thm:unique1}
If $p = 1 - q \in [0,1/2)$ and $\bar{g} \in \ell^\infty(\mathbb{N})$, then solutions to \eqref{eq:loctimeeq} are pathwise unique for any initial distribution $\gamma \in \clp_0(\mathbb{R}^\infty)$.
\end{theorem}

When $p \geq q$, we obtain pathwise uniqueness among solutions whose initial data and collision local times satisfy certain conditions. Note that the second result only applies when $p > q$.

\begin{theorem} \label{thm:unique2}
Assume $p = 1 - q \in [1/2,1)$ and $\bar{g} \in \ell^\infty(\mathbb{N})$. Suppose $\gamma \in \clp_0(\mathbb{R}^\infty)$ has the property that for $\gamma$-a.e. $\bar{x} = \{x_j\}_{j \in \mathbb{N}}$, 
\begin{equation} \label{eq:init_cond1} \tag{D.1}
\liminf_{M \to \infty} \frac{x_M}{\sqrt{M}} > 0.
\end{equation}
Then solutions to \eqref{eq:loctimeeq} are pathwise unique for the initial distribution $\gamma$, among all solutions whose associated collection of local times $\{L_{(j,j+1)}\}$ satisfies the following condition: For all $T \in [0,\infty)$,
\begin{equation} \label{eq:cncstar} \tag{L.1}
\limsup_{M \to \infty} \sup_{s \in [0,T]} \frac{q L_{(M,M+1)}(s) - p L_{(M-1,M)}(s)}{1 \vee X_M(0)} \leq 0 \text{ a.s.}
\end{equation}
\end{theorem}

\begin{theorem} \label{thm:unique3}
Assume $p = 1 - q \in (1/2,1)$ and $\bar{g} \in \ell^\infty(\mathbb{N})$. Suppose that $\gamma \in \clp_0(\mathbb{R}^\infty)$ has the property that for $\gamma$-a.e. $\bar{x} = \{x_j\}_{j \in \mathbb{N}}$, 
\begin{equation} \label{eq:init_cond2} \tag{D.2}
\limsup_{M \to \infty} \frac{x_M}{\sqrt{M}} = \infty.
\end{equation}
Then solutions to \eqref{eq:loctimeeq} are pathwise unique for the initial distribution $\gamma$, among all solutions whose associated collection of local times $\{L_{(j,j+1)}\}$ satisfies the following condition: For all $T \in [0,\infty)$, 
\begin{equation} \label{eq:cncstar2} \tag{L.2}
\lim_{M \to \infty} \ \left( \frac{q}{p} \right)^M L_{(M,M+1)}(T) = 0 \text{ a.s.}
\end{equation}
\end{theorem}

\begin{remark} \normalfont 
The conditions \eqref{eq:cncstar} and \eqref{eq:cncstar2} needed when $p \geq q$ have a certain intuitive meaning and are not particularly restrictive to apply in practice. One can interpret both of these conditions as preventing rapid clustering of particles `at infinity'. In particular, viewing $L = \{L_{(i,i+1)}\}_{i \in \mathbb{Z}}$ as part of the solution data, \eqref{eq:cncstar2} can be regarded as a state space condition, analogous to the growth conditions encountered in, for example, PDE theory when the spatial domain is non-compact. If such rapid clustering does occur and the conditions fail to hold, there may be persisting effects on lower particles from far away particles. Hence, it may be possible to construct additional solutions to \eqref{eq:loctimeeq} by other approximation strategies via introducing a `heavy particle at infinity', which forces the rest of the particles to cluster rapidly in finite time. We leave the exploration of this non-uniqueness behavior as an open problem.

In practice, the conditions \eqref{eq:cncstar} and \eqref{eq:cncstar2} can be verified by showing that the particle trajectories $X_j$ satisfy certain mild bounds. We illustrate this approach in \S\ref{ssec:approximative}, where we verify the conditions for \textit{approximative solutions}, the most studied type of strong solution to \eqref{eq:loctimeeq0}. We will see that if $p \geq q$, then \eqref{eq:cncstar} holds for approximative solutions whose initial data satisfy \eqref{eq:init_cond1}, and if $p > q$, then \eqref{eq:cncstar2} holds for approximative solutions for any choice of initial data.

One may ask what the benefit of Theorem \ref{thm:unique3} is when Theorem \ref{thm:unique2} already covers the case where $p \geq q$. In addition to having the convenient interpretation as a state space solution noted above, \eqref{eq:cncstar2} seems to be easier to verify in practice. This can be seen in the proof of Proposition \ref{prop:lcond}, where part (i) of the proposition depends crucially on the growth rate of the initial data, as well as on estimates from rank-based diffusions allowing control of the system in the $p = q$ case, while part (ii) of the same proposition has a quite short and elementary proof that requires no growth condition on the initial data at all.
\end{remark}

\begin{remark} \normalfont
Our proof of Theorem \ref{thm:unique3} actually shows the following more general result: Suppose $p>q$ and $\{X_j\}$, $\{X'_j\}$ are two strong solutions with the same driving Brownian motions $\{B_j\}$, same initial distribution $\gamma \in \clp_0(\RR^{\infty})$ and $X_j(0) = X'_j(0)$ for all $j \in \NN$ and both solutions satisfy condition \eqref{eq:cncstar2}. Then, almost surely, for all $j \in \mathbb{N}$ and $t \in [0,T)$, $X_j(t) = X_j'(t)$, where
$$
T = 4^{-1}\left(1 + \sum_{j = 1}^\infty (q/p)^j(1 + \sqrt{j+1}) \right)^{-2} \left( \limsup_{M \to \infty} \frac{X_M(0)}{\sqrt{M}} \right)^2.
$$
When the right-most factor is infinite, we recover Theorem \ref{thm:unique3}.
\end{remark}

The following result gives existence of strong solutions. We emphasize that part (ii) of the theorem is not new and has been established in \cite{sarantsev2017infinite}; we include this in the theorem below in order to provide a complete picture. Part (i) is proved by combining results from \cite{WFSbook} with the monotonicity properties derived in \cite{sarantsev2017infinite}. The proof is in Section \ref{proofexist}.

\begin{theorem} \label{thm:exist}
Let $\gamma \in \clp_0(\RR^{\infty})$, and assume that $\bar{g} \in \ell^2(\mathbb{N})$.
\begin{enumerate}[label = (\roman*)]
\item  Suppose $p = 1 - q \in [0,1/2)$ and for some $\chi > 1/2$, $\gamma$ satisfies
\begin{equation} \label{eq:admissible}
\sum_{k = 1}^\infty \gamma(x_k \leq k^\chi) < \infty.
\end{equation}
Then there exists a strong solution to \eqref{eq:loctimeeq} with initial distribution $\gamma$.

\item (See \cite{sarantsev2017infinite}.) Suppose $p = 1 - q \in (1/2,1)$ and for some $c > 0$, $\gamma$ satisfies
\begin{equation} \label{eq:scon}
\sum_{j = 1}^\infty e^{-c(x_j \vee 0)^2} < \infty, \; \gamma\text{-a.s.}
\end{equation}
Then there exists a strong solution to \eqref{eq:loctimeeq} with initial distribution $\gamma$.
\end{enumerate}
\end{theorem}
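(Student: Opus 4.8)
The plan is to handle part (i) and part (ii) separately; since part (ii) is quoted from \cite{sarantsev2017infinite}, the real work is part (i), which concerns $p < q$ (i.e. $p < 1/2$). The strategy is to reduce to the known $p = 0$ case via Girsanov's theorem and monotone comparison of finite particle systems. First I would recall from \cite{WFSbook} the explicit strong solution $\{X_j^0\}$ to \eqref{eq:loctimeeq} in the case $p = 0$, $\bar g \equiv 0$, which is built from Brownian last-passage percolation: there $X_j^0(t)$ is given by an explicit LPP functional of the driving Brownian motions, and the hypothesis \eqref{eq:admissible} with $\chi > 1/2$ (together with Borel--Cantelli) guarantees that the initial configuration is, almost surely, spread out enough — roughly $x_k \gtrsim k^\chi$ eventually — so that the LPP construction converges and yields a genuine continuous adapted solution. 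This is the place where the growth condition on $\gamma$ enters: it ensures finiteness of the relevant suprema of LPP quantities (of the type defined in \eqref{eq:VMdef}–\eqref{eq:WMdef}) on compact time intervals, so that $X_1^0$ (and each $X_j^0$) only "sees" finitely many particles on any $[0,T]$.

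Next I would pass from $(p,\bar g) = (0, \bar 0)$ to general $(p, \bar g)$ with $p < q$ in two moves. For the drift, apply Girsanov's theorem: since $\bar g \in \ell^2$ by Condition \ref{cn:c1}, the exponential martingale $\exp(\sum_j \int_0^\cdot g_j\, dB_j(s) - \tfrac12 \sum_j g_j^2 s)$ is a true martingale on each $[0,T]$ (finite second moment of the stochastic integral), so under an equivalent change of measure the $B_j$ acquire drifts $g_j$; because strong existence is a statement about solvability for a given set of driving Brownian motions and the solution from \cite{WFSbook} is an explicit measurable functional of the path $\{B_j + g_j(\cdot)\}$, one transfers the solution. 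Then, for $p \in (0, q)$, use the monotonicity/comparison machinery of \cite{sarantsev2017infinite} (the comparison results for RBM with different collision parameters, e.g.\ the analogues of the cited Lemma 3.8 and Corollaries 3.10–3.12): the finite $N$-particle systems with parameter $p$ are sandwiched, coordinate-wise and monotonically in $N$, between systems built from the $p = 0$ data, so the increasing limit as $N \to \infty$ exists, is finite a.s.\ (using the LPP bound from the $p=0$ case as a dominating envelope), and solves \eqref{eq:loctimeeq} with parameter $p$. This simultaneously shows the limit is an \emph{approximative} solution in the sense of \cite{sarantsev2017infinite}.

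Finally I would verify that the limiting object genuinely satisfies P1–P3 of Definition \ref{def:cbp}: the local times $L_{(j,j+1)}$ arise as limits of the finite-system local times, non-decreasing, starting at $0$, supported on the collision set $\{X_{j+1} = X_j\}$ (the support property survives the uniform-on-compacts limit because $\int_0^\infty (X_{j+1}^N - X_j^N)\,dL_{(j,j+1)}^N = 0$ and both factors converge), and adaptedness is inherited from the finite systems. The main obstacle I anticipate is controlling the infinite-dimensional limit uniformly on compacts: one must show that, under \eqref{eq:admissible}, the comparison bounds coming from the $p=0$ LPP solution are tight enough to prevent blow-up of $X_j$, of the gaps, and of the local times for \emph{all} $j$ on a fixed $[0,T]$ — i.e.\ that the "light cone" is finite — and this is exactly where the interplay between $\chi > 1/2$, the LPP large-deviation estimates, and the $p < q$ drift of the local time into higher ranks must be made quantitative. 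The Girsanov step is routine given $\ell^2$ drifts, and the comparison step is largely a citation to \cite{sarantsev2017infinite}, so the crux is the a.s.\ finiteness and continuity of the $p=0$ LPP solution under the stated initial-data hypothesis, which is precisely what \cite{WFSbook} supplies.
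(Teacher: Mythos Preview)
Your plan is essentially the paper's own route: use the explicit $p=0$ solution from \cite{WFSbook} under \eqref{eq:admissible}, pass to general $p<q$ by monotone comparison of finite systems, take the limit and verify Definition~\ref{def:cbp}, then handle nonzero $\bar g\in\ell^2$ by Girsanov. Two corrections are worth flagging. First, the monotonicity goes the other way: for fixed $k$, the finite $M$-particle trajectories $X_k^M(t)$ are \emph{decreasing} in $M$ (adding a particle above can only push down), and the $p=0$ infinite system furnishes a \emph{lower} envelope via $X_k^{0,\infty}\le X_k^{0,M}\le X_k^M$; so you take a decreasing limit bounded below, not an increasing one bounded above. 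Second, the key comparison $X_k^{0,M}\le X_k^M$ between collision parameters $0$ and $p$ is not a black-box citation: in the paper it is proved directly by a backward induction on $k$ using the one-sided Skorokhod representation (the upward push $pL_{(k-1,k)}$ only helps), and the monotonicity in $M$ and the lower bound by $X^{0,\infty}$ come from the Skorokhod-based comparison lemmas (Lemma~\ref{lem:bd_by_fin}). The Girsanov step and the verification that the uniform-on-compacts limit inherits P1--P3 (including the support property of $L_{(k,k+1)}$) are as you describe; the paper does comparison first with $\bar g=\bar 0$ and Girsanov afterwards, but either order works.
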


\begin{remark} \label{plinitial} \normalfont
Although Theorem \ref{thm:unique1} on pathwise uniqueness of solutions when $p < 1/2$ imposes no restrictions on the initial distribution $\gamma$, we will see from its proof (Lemma \ref{lem:biginf2}) that for a strong solution to exist in this setting, $\gamma$ must satisfy
$$
\liminf_{M \to \infty} \frac{x_M}{\sqrt{M}} = \infty \text{ for  } \gamma \mbox{-a.e } \bar x = \{x_j\}_{j \in \NN}.
$$ 
Thus the condition \eqref{eq:admissible} in Theorem \ref{thm:exist}(i) comes close to the above necessary condition for existence.
\end{remark}

\begin{remark} \normalfont 
{Weak existence and uniqueness of solutions to the unordered system 
\eqref{RBD}, when the initial configuration satisfies the condition
$x_j \ge aj +b$, $j \in \NN$, for some $a>0$ was established in \cite{PP} (see also \cite{shkol2011levy}) and the  paper \cite{sarantsev2017infinite} relaxed this condition substantially to the requirement that, for every $c>0$, $\sum_{k=1}^{\infty}\exp\{-c (x_k)_+^2\}<\infty$. Note that by passing to the order statistics, such a weak solution produces a weak solution of \eqref{eq:loctimeeq0} with $p=1/2$. Here we are concerned with the more general setting of $p \in (0,1)$ and with wellposedness of strong sense solutions of \eqref{eq:loctimeeq0}.
Conditions (D.1) and (D.2) are weaker than the condition on the initial data in \cite{PP} noted above while stronger than the condition on initial data in \cite{sarantsev2017infinite}.}
\end{remark}

We note the following consequence of Theorems \ref{thm:unique1}, \ref{thm:unique2} and \ref{thm:unique3} on uniqueness of solutions, and Theorem \ref{thm:exist} on existence of solutions. Approximative versions mentioned in the statement of this corollary were introduced in \cite{sarantsev2017infinite}, and a precise definition of these is given in Section \ref{ssec:approximative}.

\begin{corollary}\label{cor:eusummary}
Fix parameters $p \in [0,1)$  and $\bar{g} = \{g_j\}_{j \in \mathbb{N}} \in \mathbb{R}^{\infty}$. Let $\gamma \in \clp_0(\RR^{\infty})$.
\begin{enumerate}[label = (\roman*)]
\item Suppose $p = 1 - q \in [0,1/2)$ and that for some $\chi > 1/2$, $\gamma$  satisfies \eqref{eq:admissible}.
Then there exists a strong solution to \eqref{eq:loctimeeq} with initial distribution $\gamma$, and this solution must be the approximative version solution of \eqref{eq:loctimeeq}.

\item Suppose $p = 1 - q \in [1/2,1)$ and that, for some $c > 0$, the initial distribution $\gamma$ satisfies \eqref{eq:scon}.
Then there exists a strong solution to \eqref{eq:loctimeeq} with initial distribution $\gamma$. Suppose in addition that \eqref{eq:init_cond1} holds and that the local times associated with a given strong solution satisfy \eqref{eq:cncstar}. Then this strong solution must be the approximative version solution of \eqref{eq:loctimeeq}.

\item Suppose $p = 1 - q \in (1/2,1)$,
suppose  that \eqref{eq:init_cond2} holds, and that the local times associated with the strong solution 
of \eqref{eq:loctimeeq} satisfy \eqref{eq:cncstar2}. Then this solution must be the approximative version solution of \eqref{eq:loctimeeq}.
\end{enumerate}
\end{corollary}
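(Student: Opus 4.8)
The plan is to read off all three parts of Corollary \ref{cor:eusummary} from Theorems \ref{thm:exist} and \ref{thm:unique} together with Proposition \ref{prop:lcond} of Section \ref{ssec:approximative} (which checks that approximative versions obey the local-time growth conditions), the structure being parallel in each case. Throughout I would work under the $\ell^2$ drift hypothesis \ref{cn:c1} used in Theorem \ref{thm:exist} and note that it implies the $\ell^\infty$ hypothesis \ref{cn:c1a} required in Theorem \ref{thm:unique}, so that both results are simultaneously available. I would also record at the outset the elementary implications: by Borel--Cantelli, \eqref{eq:admissible} with $\chi>1/2$ forces $x_k > k^\chi$ eventually $\gamma$-a.s., hence $x_k/\sqrt{k}\to\infty$, so \eqref{eq:admissible} $\Rightarrow$ \eqref{eq:init_cond1} (and also $\Rightarrow$ \eqref{eq:scon} for every $c>0$).

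For part (i): Theorem \ref{thm:exist}(i)---whose proof in Section \ref{proofexist} produces the solution explicitly, and shows it is an approximative version---gives a strong solution with initial distribution $\gamma$ that is the approximative version of \eqref{eq:loctimeeq}. Since $p<q$, Theorem \ref{thm:unique}(i) applies with no restriction on the initial configuration beyond $\gamma \in \clp_0(\RR^\infty)$ and yields pathwise uniqueness for $\gamma$; hence every strong solution with the given driving Brownian motions and initial configuration coincides with the one just constructed, and is therefore the approximative version. Here \eqref{eq:admissible} enters only through Theorem \ref{thm:exist}(i); the uniqueness step needs nothing more than \ref{cn:c1a}.

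For parts (ii) and (iii): existence of a strong solution---which, by the construction of \cite{sarantsev2017infinite}, is the approximative version---follows from Theorem \ref{thm:exist}(ii) under \eqref{eq:scon} (and $p>q$ implies $p\ge q$, so this covers (iii) as well). Now let $\{X_j\}$ be an arbitrary strong solution with initial distribution $\gamma$, driving Brownian motions $\{B_j\}$ and initial data $\bar x$, whose associated local times satisfy \ref{cn:cstar} (for (ii)), resp.\ \ref{cn:cstar2} (for (iii)). Realise the approximative version $\{\tilde X_j\}$ on the same filtered probability space, driven by the same $\{B_j\}$ and started from the same $\bar x$; this is possible because the approximative version is itself a strong solution. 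By Proposition \ref{prop:lcond}(i), invoking \eqref{eq:init_cond1}, the local times of $\{\tilde X_j\}$ satisfy \ref{cn:cstar}; resp.\ by Proposition \ref{prop:lcond}(ii), which requires no condition on the initial data, they satisfy \ref{cn:cstar2}. Thus $\{X_j\}$ and $\{\tilde X_j\}$ are two strong solutions with the same driving noise and initial data, both lying in the class covered by Theorem \ref{thm:unique}(ii), resp.\ (iii)---this is where \eqref{eq:init_cond1}, resp.\ \eqref{eq:init_cond2}, is used---so pathwise uniqueness forces $X_j(t)=\tilde X_j(t)$ for all $j$ and $t$, a.s. Hence the given strong solution is the approximative version.

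The corollary has no genuine obstacle of its own; its content is carried entirely by the theorems invoked. The one point that must not be skipped---and the real glue of the argument---is the verification in Proposition \ref{prop:lcond} that the approximative version itself satisfies \ref{cn:cstar} (resp.\ \ref{cn:cstar2}): without it, the pathwise uniqueness of Theorem \ref{thm:unique}(ii)--(iii), which holds only among solutions whose local times obey these growth bounds, could not be applied to identify a general such solution with the approximative version. The remaining work is bookkeeping: tracking which hypothesis feeds into which of Theorems \ref{thm:exist}, \ref{thm:unique} and Proposition \ref{prop:lcond}, and using the implications among \eqref{eq:admissible}, \eqref{eq:scon}, \eqref{eq:init_cond1}, \ref{cn:c1} and \ref{cn:c1a} recorded above.
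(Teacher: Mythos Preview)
Your proposal is correct and follows essentially the same route as the paper: existence from Theorem~\ref{thm:exist} (whose proof in Section~\ref{proofexist} constructs the solution as an approximative version), and, for parts (ii)--(iii), identification of an arbitrary solution satisfying \ref{cn:cstar}/\ref{cn:cstar2} with the approximative version via Proposition~\ref{prop:lcond} and the pathwise uniqueness of Theorem~\ref{thm:unique}. Your write-up is in fact more detailed than the paper's one-paragraph sketch---in particular your explicit observation that \ref{cn:c1} implies \ref{cn:c1a}, and your invocation of Theorem~\ref{thm:unique}(i) in part~(i) to conclude that \emph{every} strong solution coincides with the approximative version (the paper's proof of (i) only asserts that the constructed solution is approximative).
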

{\begin{proof}The first part follows from the proof of Theorem \ref{thm:exist}(i), which constructs the solution as an approximative version. The last two parts of the corollary follow from Proposition \ref{prop:lcond} which shows that when $p = 1 - q \in [1/2,1)$, under \eqref{eq:init_cond2}, the local times associated with the strong approximative solution satisfy \eqref{eq:cncstar}, and when $p = 1 - q \in (1/2,1)$,  the local times  satisfy \eqref{eq:cncstar2}.
\end{proof}}

\begin{remark}[Generalizations] \normalfont
One can consider more general versions of the system of stochastic differential equations \eqref{eq:loctimeeq}. Let $\{D_j(t), t \in [0,\infty), j \in \mathbb{N}\}$ be a collection of continuous processes, given on a filtered probability space $\{\Omega, \clf, \{\clf_t\}_{t \geq 0}, \PP\}$, and suppose $\bar{x} = \{x_j\}_{j \in \mathbb{N}}$ is a non-decreasing sequence of real-valued $\mathcal{F}_0$-measurable random variables. Consider the system of equations 
\begin{equation} \label{eq:genloctimeeq}
X_j(t) = x_j + D_j(t) + p_{j-1} L_{(j-1,j)}(t) - q_j L_{(j,j+1)}(t), \quad t \in [0,\infty), j \in \mathbb{N}.
\end{equation}
where $p_0 = 0$, and for all $j \in \mathbb{N}$, $p_j = 1 - q_j \in [0,1)$. The collision local times $\{L_{(j,j+1)}\}$ are defined in the usual way (see Definition \ref{def:cbp}\ref{pr:p3}).

We expect our methods for establishing pathwise uniqueness of solutions to extend to the system \eqref{eq:genloctimeeq} with more general driving processes $\{D_j\}$ and parameters $p_j$ and $q_j$. When the driving processes $\{D_j\}$ are still taken to be i.i.d.\ Brownian motions with drifts, as in \eqref{eq:loctimeeq}, straightforward analogues of Theorems \ref{thm:unique1}, \ref{thm:unique2}, and \ref{thm:unique3} should hold under the respective assumptions that (a) $\limsup_{j \to \infty} p_j \in [0,1/2)$, (b) $\liminf_{j \to \infty} p_j \in [1/2,1)$, and (c) $\liminf_{j \to \infty} p_j \in (1/2,1)$, with minimal modifications to the proofs. We have chosen to not consider these more general cases to keep the notation and proofs cleaner.

Many of our arguments do not depend on the precise distribution of the collection of driving processes $\{D_j\}$ at all. In particular, the estimates from \S\ref{ssec:ptwise} hold for any collection of continuous functions $\{D_j\}$, including deterministic ones. On the other hand, pathwise uniqueness proofs require the moderate and large deviation bounds for the quantities from Brownian last-passage percolation given in Lemma \ref{lem:lpp}, which should hopefully extend to other random driving processes possessing analogous deviation properties.

Similarly, Theorem \ref{thm:exist}(i) on strong existence of solutions should extend straightforwardly to the system \eqref{eq:genloctimeeq}, for any choice of parameters $p_j = 1-q_j \in [0,1)$, $j \in \mathbb{N}$, provided that the driving diffusions are still i.i.d.\ Brownian motions with drifts. However, obtaining existence results for more general driving processes will require additional work since our proofs use \cite[\S 2]{WFSbook} which takes the driving processes to be i.i.d.\ Brownian motions.

\end{remark}

\begin{remark} 
\label{SSrem}
\normalfont
The paper \cite{SS} considers equation \eqref{eq:loctimeeq0} for the case where $\bar g$ is the zero vector  with  a half-Poisson initial data, namely the case where $\{x_j - x_{j-1}, j \in \mathbb{N}\}$ (with $x_0=0$) is an i.i.d.\ sequence of mean $1$ Exponential random variables. Under the assumption that \eqref{eq:loctimeeq0} has a unique solution, their work studies the collection of random variables $N(u,t) := \sum_{i=1}^{\infty} \mathbf{1}\{X_i(t) \le u\}$, giving the number of particles below the level $u$ at the time instant $t$. The paper \cite{SS} gives contour integration formulas for the multipoint generating functions of the collection $\{N(x,t), x \in \mathbb{R}\}$ for each fixed $t$, and based on this they compute a single point Fredholm determinant which is then used to analyze the long time asymptotics of $N(u,t)$, making connections with the KPZ universality class. We note that the half-Poisson initial data (and $\bar g= 0$) satisfies assumptions of Theorem \ref{thm:exist}(i). Thus for the model considered in \cite{SS}, Theorem \ref{thm:exist} gives strong existence of solutions and Theorems \ref{thm:unique1} - \ref{thm:unique3} provide pathwise uniqueness of solutions for this model and together these results give a proof of wellposedness of the equation studied in \cite{SS}.
\end{remark}
\section{Finite propagation of collision events}
\label{sec:finprop}
In this section, we present some key lemmas for proving Theorems \ref{thm:unique1}, \ref{thm:unique2}, and \ref{thm:unique3}. Proofs of these lemmas are given in Section \ref{sec:pfssec2}.

The first result gives a certain ``translation invariance'' property 
of solutions to \eqref{eq:loctimeeq}. Let $C([0,\infty),\mathbb{R})$ denote the space of continuous functions from $[0,\infty)$ into $\mathbb{R}$, and for $t \in [0,\infty)$, define maps $\utr_t, \ntr_t : C([0,\infty),\mathbb{R}) \to C([0,\infty),\mathbb{R})$ by 
\begin{equation} \label{eq:translate}
\utr_t\omega(s) = \omega(t + s), \quad \ntr_t\omega(s) = \omega(s + t) - \omega(t), \quad s \in [0,\infty).
\end{equation}
In particular, for a stochastic process $\{Y(t)\}$ with sample paths in $C([0,\infty),\mathbb{R})$, we write
$$\utr_tY(s) = Y(t + s), \quad \ntr_tY(s) = Y(s + t) - Y(t), \quad s, t \in [0,\infty).$$
Also, for $s,t \geq 0$, let $\theta_t \mathcal{F}_s = \mathcal{F}_{s + t}$, where $\{\clf_t\}$ is a filtration given on some probability space.

\begin{lemma} \label{lem:translate}
Fix parameters $p \in [0,1)$  and $\bar{g} = \{g_j\}_{j \in \mathbb{N}} \in \mathbb{R}^{\infty}$. Fix $N \in \mathbb{N} \cup \{\infty\}$, and let $\{X_j\}_{j \in [N]}$ be a system of competing Brownian particles with parameters $p, \bar{g}$, initial conditions $\bar{x}$, and driving Brownian motions $\{B_j\}_{j \in \mathbb{N}}$, given on some filtered probability space $(\Omega, \clf, \{\clf_t\}_{t\ge 0}, \PP)$. Let $\{L_{(j,j+1)}\}$ be the associated collection of local times.
Suppose $\tau$ is a finite $\{\mathcal{F}_t\}$-stopping time.  Then $\{\utr_\tau X_j\}_{j \in [N]}$ is a solution to \eqref{eq:loctimeeq}, on the  filtered probability space $(\Omega, \clf, \{\theta_{\tau}\clf_t\}_{t\ge 0}, \PP)$, with parameters $p,\bar g$, initial conditions $\{X_j(\tau)\}_{j \in [N]}$, and driving Brownian motion $\{\ntr_\tau B_j\}_{j \in [N]}$; with the associated collection of local times  given by $\{\ntr_\tau L_{(j,j+1)}\}$.
\end{lemma}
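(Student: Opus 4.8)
The plan is to verify directly that the shifted collection $\{\utr_\tau X_j\}_{j\in[N]}$, together with the shifted local times $\{\ntr_\tau L_{(j,j+1)}\}$, satisfies properties P1--P3 of Definition \ref{def:cbp} on the filtered probability space $(\Omega,\clf,\{\theta_\tau\clf_t\}_{t\ge 0},\PP)$, and that $\{\ntr_\tau B_j\}$ is an admissible collection of driving Brownian motions for this space. Since $\tau<\infty$ a.s., the random variables $X_j(\tau)$, $B_j(\tau)$, $L_{(j,j+1)}(\tau)$ are a.s.\ finite and the shifted processes are well-defined continuous processes on $[0,\infty)$. For P1 I would evaluate \eqref{eq:loctimeeq} at the times $\tau+s$ and $\tau$ and subtract: for each $j\in[N]$ and $s\ge 0$,
\[
X_j(\tau+s)-X_j(\tau) = g_j s + \big(B_j(\tau+s)-B_j(\tau)\big) + p\big(L_{(j-1,j)}(\tau+s)-L_{(j-1,j)}(\tau)\big) - q\big(L_{(j,j+1)}(\tau+s)-L_{(j,j+1)}(\tau)\big),
\]
which is exactly the asserted identity $\utr_\tau X_j(s) = X_j(\tau) + g_j s + \ntr_\tau B_j(s) + p\,\ntr_\tau L_{(j-1,j)}(s) - q\,\ntr_\tau L_{(j,j+1)}(s)$. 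Property P2 is then immediate, since for each fixed $s$ the sequence $\{\utr_\tau X_j(s)\}_{j\in[N]}=\{X_j(\tau+s)\}_{j\in[N]}$ is non-decreasing by P2 for the original solution.

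For P3, the identities $\ntr_\tau L_{(0,1)}\equiv 0$ (and $\ntr_\tau L_{(N,N+1)}\equiv 0$ when $N<\infty$) are clear, and for $1\le j<N$ the process $s\mapsto\ntr_\tau L_{(j,j+1)}(s)=L_{(j,j+1)}(\tau+s)-L_{(j,j+1)}(\tau)$ inherits continuity, monotonicity, and the property of vanishing at $s=0$ from $L_{(j,j+1)}$. The only point requiring an argument is the flat-off condition \eqref{eq:restricted_inc}, which I would obtain from the change of variables $t=\tau+s$ in the Lebesgue--Stieltjes measure $dL_{(j,j+1)}$:
\[
\int_0^\infty\big(\utr_\tau X_{j+1}(s)-\utr_\tau X_j(s)\big)\,d\ntr_\tau L_{(j,j+1)}(s) = \int_{[\tau,\infty)}\big(X_{j+1}(t)-X_j(t)\big)\,dL_{(j,j+1)}(t).
\]
The right-hand side lies between $0$ and $\int_{[0,\infty)}(X_{j+1}(t)-X_j(t))\,dL_{(j,j+1)}(t)=0$, because the integrand is non-negative by P2 and $dL_{(j,j+1)}$ is a non-negative measure; hence the left-hand side equals $0$, which is \eqref{eq:restricted_inc} for the shifted processes.

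It remains to treat the probabilistic structure, and this is the only step that takes some care. For each $s\ge 0$, $\tau+s$ is a finite $\{\clf_t\}$-stopping time, so, $X_j,B_j,L_{(j,j+1)}$ being continuous and $\{\clf_t\}$-adapted, the random variables $X_j(\tau+s)$, $B_j(\tau+s)$, $L_{(j,j+1)}(\tau+s)$ are $\clf_{\tau+s}=\theta_\tau\clf_s$-measurable; consequently $\{\utr_\tau X_j\}$, $\{\ntr_\tau B_j\}$, $\{\ntr_\tau L_{(j,j+1)}\}$ are $\{\theta_\tau\clf_t\}$-adapted and $\{X_j(\tau)\}$ is $\theta_\tau\clf_0$-measurable. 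Finally, that $\{\ntr_\tau B_j\}_{j\in\NN}$ are independent standard Brownian motions with $\{\ntr_\tau B_i(t+\cdot)-\ntr_\tau B_i(t),\ i\in\NN\}=\{B_i(\tau+t+\cdot)-B_i(\tau+t),\ i\in\NN\}$ independent of $\theta_\tau\clf_t=\clf_{\tau+t}$ for every $t\ge 0$ follows from the strong Markov property of Brownian motion applied at the finite stopping time $\tau+t$: the given hypothesis that $\{B_i\}$ are independent standard Brownian motions with $\{B_i(r+\cdot)-B_i(r),\ i\in\NN\}$ independent of $\clf_r$ for all deterministic $r\ge 0$ upgrades, via the standard approximation of $\tau$ from above by countably-valued stopping times together with path-continuity, to the same statement at stopping times, and joint independence over the countable family is retained since it is determined by its finite sub-collections. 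Assembling these observations gives the lemma; I expect the verification of P1--P3 to be entirely routine, with the passage from deterministic times to the stopping time $\tau$ in the Markov property for the infinite family $\{B_j\}$ being the only slightly delicate point.
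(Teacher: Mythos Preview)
Your proof is correct and follows essentially the same approach as the paper's own proof: both verify P1--P3 directly for the shifted processes, using the change of variables $t\mapsto\tau+t$ for the flat-off condition \eqref{eq:restricted_inc}. Your treatment is in fact more careful than the paper's, which simply asserts that the verifications are ``easy to check''; in particular, you explicitly address the adaptedness to $\{\theta_\tau\clf_t\}$ and the strong Markov property for the infinite family $\{B_j\}$, points the paper leaves implicit.
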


The next result gives conditions under which the first few coordinates of a solution of an infinite system of competing Brownian motions agree with the corresponding coordinates of the solution of a finite system. Fix a strong solution $\{X_j\}$ of \eqref{eq:loctimeeq} with some choice of parameters $p, \bar g$ and initial distribution $\gamma$.
For $i \in \mathbb{N}$ and $0 \leq u < v < \infty$, recall the quantity
\[
\begin{split}
K^*(i,[u,v]) = \sup\{k \geq 0 & : \exists u \leq s_{i + k - 1} \leq \cdots \leq s_{i + 1} \leq s_i \leq v \\
& \text{ such that, for } i \leq j \leq i + k - 1, X_{j+1}(s_j) = X_j(s_j)\}.
\end{split}
\]
To emphasize the dependence on the choice of the solution of \eqref{eq:loctimeeq}, we will occasionally denote this quantity as $K^*(i,[u,v], \{X_j\})$. To keep notation simpler, for $i \in \mathbb{N}$ and $T \in (0,\infty)$, we will also write
\[
K^*(i,T) = K^*(i,T,\{X_j\}) := K^*(i,[0,T], \{X_j\}).
\]
The main use we have for the above quantity is the following lemma.

\begin{lemma} \label{lem:finK-unique}  
Fix parameters $p \in [0,1)$  and $\bar{g} = \{g_j\}_{j \in \mathbb{N}} \in \mathbb{R}^{\infty}$. Let $\gamma \in \clp_0(\RR^{\infty})$. Let
$\{X_j\}$ be a strong solution to \eqref{eq:loctimeeq} with initial distribution $\gamma$.
Denote the driving Brownian motions by $\{B_j\}$ and let $x_j = X_j(0)$, $j \in \NN$. For $N \in \mathbb{N}$, let $$\{X_j^N(t), t \in [0,\infty), 1 \leq j \leq N\}$$ denote the unique (pathwise) solution to the system of $N$ competing Brownian motions with parameters $p,\bar{g}^{(N)} := (g_1, \dots, g_N)$, initial data $\{x_j\}_{1 \leq j \leq N}$, and driving Brownian motions $\{B_j\}_{1 \leq j \leq N}$. Then, for $i,M \in \mathbb{N}$ and $T \in (0,\infty)$, on the event $K^*(i,T, \{X_j\}) \leq M$,
$$
X_j(t) = X_j^{i + M}(t), \text{ for all $1 \le j \leq i$ and $t \in [0,T]$}. 
$$ 
\end{lemma}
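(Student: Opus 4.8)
The plan is to argue by a "propagation from above" comparison combined with the translation invariance of Lemma \ref{lem:translate}. Fix $i, M \in \NN$, $T \in (0,\infty)$, and work on the event $E := \{K^*(i,T,\{X_j\}) \le M\}$. The heuristic is: if the maximal collision chain reaching the $i$th particle from above within $[0,T]$ has length at most $M$, then the dynamics of particles $1, \dots, i$ on $[0,T]$ can only be influenced by particles with index at most $i+M$, because any influence from particle $i+M+1$ and higher would have to propagate down to particle $i$ through a chain of simultaneous collisions of length exceeding $M$. Since particle $i+M$ in the truncated system $\{X_j^{i+M}\}$ has no particle above it (so $L^{i+M}_{(i+M,i+M+1)} \equiv 0$), while in the infinite system $X_{i+M}$ feels $-qL_{(i+M,i+M+1)}$, the two systems differ only through this one local time term at index $i+M$, and on $E$ that difference does not propagate down to index $i$.

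The key steps, in order. First, I would set up a "first discrepancy" argument: on $E$, suppose for contradiction that $X_j(t) \ne X_j^{i+M}(t)$ for some $j \le i$ and $t \le T$; let $t_0 = \inf\{t : X_j(t) \ne X_j^{i+M}(t) \text{ for some } j \le i\}$, and note $t_0 < T$ and by continuity $X_j(t_0) = X_j^{i+M}(t_0)$ for all $j \le i$. Next, I would invoke Lemma \ref{lem:translate} to translate both systems forward by $t_0$ (a deterministic time, so certainly a stopping time), reducing to the case $t_0 = 0$ — i.e., it suffices to show that if the two solutions agree at time $0$ on coordinates $1,\dots,i$ and the chain-length bound holds on a short interval $[0,\delta]$, then they agree on $[0,i]$ coordinates throughout $[0,\delta]$; iterating (or using that the set where they agree is relatively closed and the argument reopens it) gives agreement on all of $[0,T]$. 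The core estimate is then local. Second — the heart of the proof — I would show that for the coordinates $1,\dots,i$ to be affected by the difference at coordinate $i+M$, there must be, on the relevant interval, a collision chain $X_{j+1}(s_j) = X_j(s_j)$ for $i \le j \le i+M-1$ with $s_{i+M-1} \le \cdots \le s_i$; this is exactly what $K^*(i,[\cdot,\cdot]) \ge M+1$ would require, contradicting membership in $E$. Concretely, I would use the Skorokhod-problem / reflection structure: both $\{X_j\}_{j\le i+M}$ and $\{X_j^{i+M}\}$ solve finite competing-particle systems with the same driving functions except that the $i+M$th coordinate of the infinite system has an extra downward push $-qL_{(i+M,i+M+1)}$; by the monotonicity/comparison properties of reflected Brownian motions in the orthant (the gap process being an RBM, cf. \cite{AS2}), one can propagate the comparison $X_j \le X_j^{i+M}$ coordinatewise, and then argue that strict inequality can first occur at coordinate $j_0 \le i$ only if it already occurred at coordinate $j_0 + 1$ at an earlier time while $X_{j_0+1} = X_{j_0}$ there, unwinding into a collision chain of length $\ge M+1$ ending at or below $i$.

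I expect the main obstacle to be making the informal "the discrepancy must travel down through a chain of collisions" argument rigorous at the level of the local time terms — i.e., controlling how a perturbation at coordinate $i+M$ enters the Skorokhod map for coordinates $i+M-1, i+M-2, \dots, i$ and showing that each downward step requires an actual collision $X_{j+1}(s) = X_j(s)$ at a corresponding time $s$, with the times ordered as in the definition of $K^*$. The cleanest route is probably an induction on $M$: for $M = 0$ one checks directly that if $X_{i+1}$ is never in contact with $X_i$ on $[0,T]$ then $X_i$ (and hence, recursively, $X_{i-1},\dots,X_1$) does not feel $L_{(i,i+1)}$ at all, so it solves the same equation as $X_i^{i}$; for the inductive step one conditions on the last time before $T$ that $X_{i+M}$ touches $X_{i+M-1}$ — if there is no such time, coordinate $i+M$ decouples and one reduces to the system of $i+M-1$ particles, handled by the inductive hypothesis with chain bound $M-1$ on the reached subsystem; if there is such a time, one uses Lemma \ref{lem:translate} at that (stopping) time and the definition of $K^*$ to peel off one link of the chain. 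Handling the stopping-time measurability of "last contact time before $T$" (it is a reverse-time object, so one should instead work with a countable family of forward hitting/exit times or approximate) is a routine but slightly delicate point I would flag and dispatch with a standard approximation.
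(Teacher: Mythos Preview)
Your intuition is right and you have identified all the necessary ingredients: translation invariance (Lemma \ref{lem:translate}), the comparison $X_j \le X_j^{i+M}$ (Lemma \ref{lem:bd_by_fin}), and the idea that a discrepancy must propagate down via a collision chain. However, the architecture you propose creates a difficulty that you yourself flag and then hand-wave: the ``last contact time before $T$'' is not an $\{\clf_t\}$-stopping time, and your suggested ``standard approximation'' is not spelled out. Moreover, the first-discrepancy reduction at $t_0$ and the induction on $M$ are more convoluted than necessary.

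The paper's proof avoids this issue entirely by organizing the argument differently. Instead of working backward in time from a last contact, it defines \emph{forward} first-hitting times: set $\tau_{i+M+1} = 0$ and, for $1 \le k \le i+M$, $\tau_k = \inf\{t \ge \tau_{k+1} : X_{k+1}(t) = X_k(t)\}$. These are genuine stopping times, and on the event $K^*(i,T) \le M$ one has $\tau_i > T$. The claim ``$X_j(t) = X_j^{i+M}(t)$ for all $j \le k$ and $t \in [\tau_{k+1},\tau_k)$'' is then proved by backward induction on the coordinate index $k$ (not on $M$). In the base case $k = i+M$, the local time $L_{(i+M,i+M+1)}$ vanishes on $[0,\tau_{i+M})$, so $\{X_j\}_{j \le i+M}$ solves the finite $N = i+M$ system there and equals $\{X_j^{i+M}\}$ by pathwise uniqueness of the finite system. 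For the step $k \to k-1$, on $[\tau_k,\tau_{k-1})$ the local time $L_{(k-1,k)}$ does not increase, so the translated truncated system $\{\theta_{\tau_k} X_j\}_{j \le k-1}$ coincides with the $(k-1)$-particle system $\{\tilde X_j^{k-1}\}$ started at $\tau_k$; then the sandwich $\theta_{\tau_k} X_j \le \theta_{\tau_k} X_j^{i+M} \le \tilde X_j^{k-1}$ (from Lemma \ref{lem:bd_by_fin}) forces equality. This is the step your sketch is missing: rather than tracking where a discrepancy first appears, one squeezes $X_j^{i+M}$ between $X_j$ and a smaller finite system that provably equals $X_j$ on the relevant interval.
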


The lemmas above give us the following criterion for pathwise uniqueness of solutions to \eqref{eq:loctimeeq}. 

\begin{lemma} \label{lem:unique}
Fix parameters $p,\bar{g}$ and let $\{X_j^{(1)}\}$, $\{X_j^{(2)}\}$ be two solutions to \eqref{eq:loctimeeq} with a common initial condition $\bar x$ and driving Brownian motions $\{B_j\}$ given on some filtered probability space $(\Omega, \clf, \{\clf_t\}_{t\ge 0}, \PP)$. Suppose that, either

\begin{enumerate}[label = (\alph*)]
\item \label{h:a} for all $i \in \mathbb{N}$ and $T \in (0,\infty)$, $K^*(i,T,\{X^{(l)}_j\}) < \infty$ a.s., for $l=1,2$, or
\item \label{h:b} there exists a strictly positive 
$\clf_0$-measurable random variable $\Delta$ such that, for all $i \in \mathbb{N}$ and $T \in [0,\infty)$, $K^*(i,[T,T+\Delta],\{X^{(l)}_j\}) < \infty$ a.s. for $l=1,2$.
\end{enumerate}
Then, a.s., $X_j^{(1)}(t) = X_j^{(2)}(t)$ for all $t\ge 0$ and $j \in \NN$.
\end{lemma}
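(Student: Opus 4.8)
The plan is to deduce pathwise uniqueness from Lemma \ref{lem:finK-unique} by comparing each of the two infinite-system solutions with the (unique) finite-system solutions driven by the same Brownian motions. First consider case \ref{h:a}. Fix $i \in \NN$ and $T \in (0,\infty)$, and for each $N \in \NN$ let $\{X_j^N\}_{1 \le j \le N}$ be the unique solution to the system of $N$ competing Brownian particles with parameters $p, \bar g^{(N)}$, initial data $\{x_j\}_{j \le N}$ and driving noises $\{B_j\}_{j \le N}$ (unique by Remark \ref{rem:veryunique}); crucially this finite solution depends only on $(\bar x, \{B_j\}, p, \bar g)$ and not on which infinite solution we started from. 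By hypothesis \ref{h:a}, $K^*(i,T,\{X_j^{(l)}\})$ is a.s.\ finite for $l = 1,2$, so on the (full-probability) event $\{K^*(i,T,\{X_j^{(1)}\}) \le M_1\} \cap \{K^*(i,T,\{X_j^{(2)}\}) \le M_2\}$ for suitable finite random $M_1, M_2$, Lemma \ref{lem:finK-unique} gives $X_j^{(1)}(t) = X_j^{i+M_1}(t)$ and $X_j^{(2)}(t) = X_j^{i+M_2}(t)$ for all $1 \le j \le i$, $t \in [0,T]$. Since the finite systems are nested in the sense that the lemma applies with any $M \ge M_l$ (one checks directly that $\{K^* \le M_l\} \subseteq \{K^* \le M\}$ for $M \ge M_l$, so we may take $M = M_1 \vee M_2$), we get $X_j^{(1)}(t) = X_j^{i + (M_1 \vee M_2)}(t) = X_j^{(2)}(t)$ for all $1 \le j \le i$ and $t \in [0,T]$. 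Letting $i \to \infty$ and $T \to \infty$ along countable sequences and using continuity of sample paths yields $X_j^{(1)} \equiv X_j^{(2)}$ a.s.\ for all $j$.

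For case \ref{h:b} the plan is to bootstrap in time using the translation-invariance property of Lemma \ref{lem:translate}. Set $\tau_0 = 0$ and $\tau_n = n\Delta$ for $n \ge 0$; each $\tau_n$ is a finite $\{\clf_t\}$-stopping time (in fact $\clf_0$-measurable shift, since $\Delta$ is $\clf_0$-measurable, and constant-time shifts are stopping times). I will show by induction on $n$ that a.s.\ $X_j^{(1)}(t) = X_j^{(2)}(t)$ for all $j \in \NN$ and $t \in [0, \tau_n]$. The base case $n=0$ is the common initial condition. For the inductive step, assume the two solutions agree on $[0,\tau_n]$; in particular $X_j^{(1)}(\tau_n) = X_j^{(2)}(\tau_n)$ for all $j$. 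By Lemma \ref{lem:translate}, $\{\utr_{\tau_n} X_j^{(l)}\}$ is, for each $l$, a solution to \eqref{eq:loctimeeq} on the shifted filtered space with the same parameters $p, \bar g$, initial data $\{X_j^{(l)}(\tau_n)\}_j$ (which coincide for $l=1,2$ by the inductive hypothesis), and common driving Brownian motions $\{\ntr_{\tau_n} B_j\}$. Hypothesis \ref{h:b} applied with $T = \tau_n = n\Delta$ gives $K^*(i, [\tau_n, \tau_n + \Delta], \{X_j^{(l)}\}) < \infty$ a.s., and one checks that $K^*(i,[\tau_n,\tau_{n+1}], \{X_j^{(l)}\}) = K^*(i, [0,\Delta], \{\utr_{\tau_n} X_j^{(l)}\})$ directly from the definition of $K^*$ (the defining collision chain $u \le s_{i+k-1} \le \cdots \le s_i \le v$ simply gets shifted). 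Hence the shifted solutions satisfy hypothesis \ref{h:a} on the time interval $[0,\Delta]$, so the already-established case \ref{h:a} argument (applied on $[0,\Delta]$) gives $\utr_{\tau_n} X_j^{(1)} \equiv \utr_{\tau_n} X_j^{(2)}$ on $[0,\Delta]$, i.e.\ the two solutions agree on $[\tau_n, \tau_{n+1}]$; combined with the inductive hypothesis this completes the step. Since $\tau_n = n\Delta \to \infty$ a.s.\ (as $\Delta > 0$), taking the union over $n$ gives agreement on $[0,\infty)$ for all $j$.

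I expect the main subtlety, rather than outright obstacle, to be the bookkeeping around randomness in $M_1, M_2$ and $\Delta$: one must ensure Lemma \ref{lem:finK-unique} is being invoked on an event of full probability where the relevant $K^*$'s are simultaneously finite (this is fine since a countable intersection of full-probability events over $i, T$ ranging in countable dense sets still has full probability), and one must justify passing from "$K^* \le M$" events to a common bound $M_1 \vee M_2$ — this requires only the monotonicity observation that $K^*(i,[u,v])$ is a fixed random variable and the conclusion of Lemma \ref{lem:finK-unique} on $\{K^* \le M\}$ is inherited on the smaller event $\{K^* \le M'\}$ for $M' \le M$, i.e.\ the statement is really "for all $M$ with $K^* \le M$". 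The second delicate point is the identity $K^*(i,[\tau_n,\tau_{n+1}],\{X_j\}) = K^*(i,[0,\Delta],\{\utr_{\tau_n} X_j\})$ together with measurability of the shifted objects with respect to $\{\theta_{\tau_n}\clf_t\}$; both follow transparently from the definitions and from Lemma \ref{lem:translate}, so no real difficulty arises there. Everything else is an assembly of Lemmas \ref{lem:translate} and \ref{lem:finK-unique} with the uniqueness statement of Remark \ref{rem:veryunique}.
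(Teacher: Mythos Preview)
Your approach is essentially the paper's: for \ref{h:a} both arguments invoke Lemma~\ref{lem:finK-unique} on the event $\{K^*(i,T,\{X^{(1)}\})\vee K^*(i,T,\{X^{(2)}\})\le M\}$ to identify each infinite solution with the common finite system $\{X_j^{i+M}\}$, and for \ref{h:b} both bootstrap along the grid $\{n\Delta\}_{n\ge 0}$ via Lemma~\ref{lem:translate}.

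One point needs care. In your induction step you write ``Hypothesis~\ref{h:b} applied with $T=\tau_n=n\Delta$,'' but the hypothesis is stated only for \emph{deterministic} $T\in[0,\infty)$, and $n\Delta$ is random; knowing that $K^*(i,[T,T+\Delta])<\infty$ a.s.\ for every fixed $T$ does not by itself give the same at the $\clf_0$-measurable time $n\Delta$. The paper handles this by first conditioning on $\clf_0$, after which $\bar x$ and $\Delta$ become constants, the $\{B_j\}$ remain Brownian motions (by independence from $\clf_0$), and hypothesis~\ref{h:b} then applies verbatim at each deterministic $T=n\Delta$. You should insert that reduction rather than apply \ref{h:b} directly at a random time.
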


\subsection{Proofs}
\label{sec:pfssec2}
In this section we give the proofs of Lemmas \ref{lem:translate}-\ref{lem:unique}.

\begin{proof}[Proof of Lemma \ref{lem:translate}]
With notation as in the statement of the lemma, let $\tilde{X}_j = \utr_\tau X_j$ and $\tilde{B}_j = \ntr_\tau B_j$, for $j \in \mathbb{N}$, and set $\tilde{L}_{(j,j+1)} = \ntr_\tau L_{(j,j+1)}$ for $j \in \{0\} \cup \mathbb{N}$. It is easy to check that $\{\tilde{X}_j\}_{j \in \mathbb{N}}$ and $\{\tilde{L}_{(j,j+1)}\}_{j \in \{0\} \cup \mathbb{N}}$ are collections of $\{\theta_{\tau}\clf_t\}$-adapted, continuous processes. {Furthermore, for $t\ge 0$ and $j \in \NN$,
\begin{align*}
\tilde{X}_j(t) &= X_j(t+\tau) = x_j + (t+\tau)g_j + B_j(t+\tau) + p L_{(j-1,j)}(t+\tau)
- q L_{(j, j+1)}(t+\tau)\\
&= x_j+  g_j \tau + B_j(\tau) + g_j t + \tilde B_j(t) + p L_{(j-1,j)}(\tau)
- q L_{(j, j+1)}(\tau) + p \tilde L_{(j-1,j)}(t)
- q \tilde L_{(j, j+1)}(t)\\
&= X_j(\tau) + \tilde B_j(t) + p \tilde L_{(j-1,j)}(t)
- q \tilde L_{(j, j+1)}(t).
\end{align*}
Thus $\{\tilde X_j\}_{j \in \NN},  \{\tilde L_{(j, j+1)}\}_{j \in \{0\} \cup \mathbb{N}}$ satisfy P1 in Definition \ref{def:cbp} with
$x_j$ replaced with $X_j(\tau)$ and $B_j$ replaced with $\tilde B_j$. Also, since $\{X_j\}$ satisfy P2, so do $\{\tilde X_j\}$.
Finally, since P3 is satisfied for $\{X_j\}_{j \in \NN},  \{ L_{(j, j+1)}\}_{j \in \NN_0}$, we see that
$$\int_0^\infty (\tilde X_{j+1}(t) - \tilde X_j(t)) \dd \tilde L_{(j,j+1)}(t) = 
\int_{\tau}^\infty (\tilde X_{j+1}(t) - \tilde X_j(t)) \dd \tilde L_{(j,j+1)}(t) = 0.
$$
Thus $\{\tilde X_j\}_{j \in \NN},  \{\tilde L_{(j, j+1)}\}_{j \in \{0\} \cup \mathbb{N}}$ satisfy P3 in Definition \ref{def:cbp}  as well. The result follows.}
\end{proof}

For a number of subsequent arguments, including for the proof of Lemma \ref{lem:finK-unique}, we will make use of certain comparison results which we now present.

Fix parameters $p,\bar{g}$ as before and let a filtered probability space, Brownian motions $\{B_j\}$ and sequence $\bar x = \{x_j\}$ be as at the beginning of Section \ref{sec:sepu}.
Let $\{X_j(t), j \in \mathbb{N}, t \in [0,\infty)\}$ be a solution to \eqref{eq:loctimeeq}, with $N = \infty$, with driving noises $\{B_j\}$ and initial condition $\bar x$. Associated local times are denoted as $\{L_{(j,j+1)}\}$.
For each $M \in \mathbb{N}$, let $\{X_j^{M}(t), t \in [0,\infty), 1 \leq j \leq M\}$ be the unique solution to \eqref{eq:loctimeeq} for the case $N = M$, once more with driving noises
$\{B_j\}$, and initial condition $x^M = \{x_j\}_{1 \leq j \leq M}$. The associated local times  are denoted as $\{L^{M}_{(j,j+1)}(t), t \in [0,\infty), 0 \leq j \leq M\}$, where as usual $L^{M}_{(0,1)} = L^{M}_{(M,M+1)} = 0$. 
In addition, let $\{\tilde{X}_j^M(t), t \in [0,\infty), 1 \leq j \leq M\}$ denote the solution to \eqref{eq:loctimeeq}, with $N=M$, the same parameters and driving Brownian motions as the system $\{X_j^M\}$, but with {\em packed initial conditions} $\tilde x^M = \{\tilde x_j\}_{1\le j \le M}$, where  $\tilde{x}_j = 0$ for $1 \leq j \leq M$. We denote the associated collection of local times by $\{\tilde{L}^{M}_{(j,j+1)}(t), t \in [0,\infty), 0 \leq j \leq M\}$. The proof of the following lemma 
is obtained, in part, as a consequence of results from \cite{AS2}.
\begin{lemma} \label{lem:bd_by_fin}
Let $M,M' \in \mathbb{N}$ with $M' \leq M$. With notation as introduced above, for all $t \in [0,\infty)$, the following bounds hold: 
\begin{enumerate}[label = (\roman*)]
\item \label{it:bf1} $X_j^M(t) \leq X_j^{M'}(t)$ for $1 \leq j \leq M'$. 
\item \label{it:bf2} $X_j(t) \leq X_j^M(t)$ for $1 \leq j \leq M$.
\item \label{it:bf3} $x_1 + \tilde{X}_i^M(t) \leq X_j^M(t) 
\leq 
x_M + \tilde{X}_k^M(t) \text{ for } 1 \leq i \leq j \leq k \leq M$.
\end{enumerate}
\end{lemma}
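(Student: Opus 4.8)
The plan is to deduce all three inequalities from the comparison and monotonicity machinery for competing Brownian particles in \cite{AS2}, which applies to systems with a common driving function and parameters of collision. The key principle I will invoke is the monotonicity of the solution map: if two systems of competing Brownian particles have the same driving Brownian motions $\{B_j\}$, the same parameters $p,q$, but driving functions $\overline{V}=\{x_j+V_j\}$ and $\overline{V}'=\{x'_j+V_j\}$ that are ordered coordinatewise (or where one system has fewer particles, viewed as having the remaining coordinates ``at $+\infty$''), then the resulting ordered-particle trajectories are correspondingly ordered. This is exactly the content of the comparison results in \cite{AS2} (the ``two systems'' comparison and the ``adding a particle'' comparison), and since $N=M$, $N=M'$, and the truncated infinite system are all finite-particle systems (except the full $\{X_j\}$ itself), the hypotheses apply cleanly.

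For \ref{it:bf1}, I would compare the $M$-particle system $\{X_j^M\}$ with the $M'$-particle system $\{X_j^{M'}\}$, $M'\le M$. Viewing the $M'$-particle system as an $M$-particle system in which particles $M'+1,\dots,M$ are removed amounts to sending those initial positions (and hence the whole trajectories) to $+\infty$; the remaining particles $1,\dots,M'$ feel, if anything, less upward push from local time at the top boundary, so $X_j^M(t)\le X_j^{M'}(t)$ for $1\le j\le M'$. Concretely this is the monotonicity in the number of particles from \cite{AS2}. For \ref{it:bf2}, the same argument applied with $M'=M$ against the infinite system $\{X_j\}$: since $\{X_j\}$ has infinitely many particles above rank $M$ pushing the lower ones down (through the $pL_{(M,M+1)}$ term absent in the truncated system, or equivalently $X_j$ is $\{X_j^{M'}\}$ with $M'\to\infty$ and one uses \ref{it:bf1} plus a limit, or a direct comparison), one gets $X_j(t)\le X_j^M(t)$ for $1\le j\le M$. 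I would state this as a direct consequence of the comparison lemma of \cite{AS2} extended to the infinite system, or alternatively obtain it by passing to the limit $M'\to\infty$ in \ref{it:bf1} if \cite{AS2} only treats finite systems; the paper's earlier discussion of approximative versions suggests such a limiting procedure is available. For \ref{it:bf3}, I compare the $M$-particle system $\{X_j^M\}$ with initial data $x^M=\{x_j\}_{1\le j\le M}$ against the two translated packed systems $\{x_1+\tilde X^M_i\}$ and $\{x_M+\tilde X^M_k\}$: since $x_1\le x_j\le x_M$ for all $j$ and the packed system has all particles at the origin, Lemma \ref{lem:translate}-type translation invariance plus coordinatewise comparison of driving functions gives, for the lower bound, $x_1+\tilde X^M_i(t)\le X^M_i(t)\le X^M_j(t)$ using $i\le j$ and order-preservation (P2); for the upper bound, $X^M_j(t)\le X^M_k(t)\le x_M+\tilde X^M_k(t)$ using $j\le k$ and comparison with the packed system shifted up by $x_M\ge x_j$.

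The main obstacle I anticipate is \ref{it:bf2}, the comparison between the genuinely infinite system and its finite truncation. The comparison results in \cite{AS2} are stated for finite particle systems, so passing to $N=\infty$ requires either (a) verifying that the comparison inequalities are preserved under the limit defining $\{X_j\}$ (which would need some a priori control, e.g. the monotone convergence of $\{X_j^{M'}\}$ as $M'\to\infty$ that underlies the approximative version when $p\ge q$, but here we want it for a general strong solution), or (b) a self-contained semimartingale argument: writing $D_j=X_j^M-X_j$ for $1\le j\le M$ and examining $\sum_j (D_j^-)^2$ or using the Tanaka/Skorohod structure of the gap processes to show $D_j\ge 0$ is propagated. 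Care is needed because the full system $\{X_j\}$ is only assumed to be \emph{a} strong solution, not the approximative one, so I cannot simply cite monotone approximation. I expect the cleanest route is to note that for each fixed $t,M$ the finite systems $\{X^{M'}_j\}_{M'\ge M}$ are decreasing in $M'$ by \ref{it:bf1}, hence converge, and then argue that the limit solves the infinite system with the same data and invoke whatever pathwise-uniqueness or minimality statement is available — but since this lemma is being used \emph{en route} to proving uniqueness, I would instead give the direct comparison argument of \cite{AS2} adapted to infinitely many particles, treating $X_j$ as dominated by $X_j^M$ because the infinite system carries an extra nonnegative local-time sink $-qL_{(M,M+1)}$ that the truncated system lacks, and this extra downward drift can only decrease the trajectories — a standard one-sided Skorohod-map comparison.
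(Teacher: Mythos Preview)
Your proposal is essentially on the right track and, for parts \ref{it:bf1} and \ref{it:bf3}, matches the paper's approach exactly: both follow from the comparison theorem of \cite[Theorem 3.2]{AS2} applied to finite systems with ordered initial data (for \ref{it:bf3}, the three systems $\{x_1 + \tilde X_j^M\}$, $\{X_j^M\}$, $\{x_M + \tilde X_j^M\}$ have the same driving noise and parameters but initial data $x_1 \le x_j \le x_M$). The paper additionally gives a separate argument for the degenerate case $p=0$ via direct backward induction on the one-sided Skorokhod representation, since the \cite{AS2} comparison may require $p>0$; you should add this.

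For part \ref{it:bf2} your discussion wavers, but you do land on the paper's argument in your last sentence. To be precise: the limiting route via \ref{it:bf1} is indeed a dead end here (as you correctly note, $\{X_j\}$ is an arbitrary strong solution, not the approximative one, so you cannot assume it is the monotone limit of the $\{X_j^{M'}\}$). The paper's resolution is not to ``adapt \cite{AS2} to infinitely many particles'' but rather to observe that the first $M$ coordinates $\{X_j\}_{1\le j\le M}$ of the infinite system already constitute a \emph{finite} system of $M$ competing particles in the sense of \cite{AS2}, with driving function $\overline{U}$ given by $U_j = x_j + V_j$ for $j<M$ and $U_M(t) = x_M + V_M(t) - qL_{(M,M+1)}(t)$. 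Since $L_{(M,M+1)}$ is nondecreasing, one has $U_j(t)-U_j(s) \le V_j(t)-V_j(s)$ for all $j$ and $s\le t$, and \cite[Theorem 3.2]{AS2} applies directly to the two finite systems to give $X_j(t) \le X_j^M(t)$. So the key step is to absorb the coupling to the rest of the infinite system into the $M$th driving function, reducing to a comparison of two genuinely finite systems---no extension of \cite{AS2} is needed.
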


\begin{proof}
We will prove part \ref{it:bf2} and \ref{it:bf3}. We omit the proof of \ref{it:bf1} which is almost identical to the proof of \ref{it:bf2}, with that the processes $\{X_j^M\}, \{L_{(j,j+1)}^M\}$ playing the roles of $\{X_j\}, \{L_{(j,j+1)}\}$, respectively, and the processes $\{X_j^{M'}\}, \{L_{(j,j+1)}^{M'}\}$ playing the roles of $\{X_j^M\}, \{L_{(j,j+1)}^M\}$, respectively.

\textit{Proof of \ref{it:bf2}.} If $p = 0$, then for $j \in \mathbb{N}$, $X_j$ and $X_{j + 1}$ are related via the Skorokhod reflection representation (see e.g.\ \cite[Lemma 6.14]{KSbook}), 
\begin{equation} \label{eq:skor1}
X_j(t) = x_j + V_j(t) + \inf_{0 \leq s \leq t} \left(X_{j+1}(s) - x_j - V_j(s) \right) \wedge 0,
\end{equation}
where $V_j$ are as in \eqref{eq:Vjdef}.
Similarly, $X^M_M(t) = x_M + V_M(t)$, and for $1 \leq j \leq M - 1$,
\begin{equation} \label{eq:skor2}
X_j^M(t) = x_j + V_j(t) + \inf_{0 \leq s \leq t} \left(X_{j+1}^M(s) - x_j - V_j(s) \right) \wedge 0.
\end{equation}
We see that $X_M(t) \leq x_M + V_M(t) = X_M^M(t)$, and we obtain \ref{it:bf2} by a backward induction on $j$ (starting with $j=M$), comparing the formulas \eqref{eq:skor1} and \eqref{eq:skor2}. 

If $0 < p < 1$, we proceed as follows. We note that the first $M$ particle positions $\{X_j, 1 \leq j \leq M\}$ satisfy the system of equations 
$$
X_j(t) = U_j(t) + p\hat{L}_{(j - 1,j)}(t) - q\hat{L}_{(j,j+1)}(t), \text{ for $1 \leq j \leq M$, $t \geq 0$},
$$
where 
$$
U_j(t) := \begin{cases}
x_j + V_j(t) & \text{ if } 1 \leq j \leq M-1, \\
x_M + V_M(t) - qL_{(M,M+1)}(t) & \text{ if } j = M,
\end{cases}
$$
and 
$$
\hat{L}_{(j,j+1)}(t) = \begin{cases}
L_{(j,j+1)}(t) & \text{ if } 0 \leq j \leq M-1 \\
0 & \text{ if } j = M.
\end{cases}
$$
We see that $\{X_j, 1 \leq j \leq M\}$ is a system of $M$ competing particles with driving function $\overline{U} := (U_1, U_2, \dots, U_M)$ and parameters of collision $p,q$, in the sense of Definition 2 in \cite{AS2}. Observe that for each $1\le j\le M$, $U_j(0) = x_j = x_j + V_j(0)$. Also, since ${L}_{(M,M+1)}$ is non-decreasing, $U_j(t) - U_j(s) \leq V_j(t) - V_j(s)$ for $0 \leq s \leq t$. Hence, by \cite[Theorem 3.2]{AS2}, $X_j(t) \leq X_j^M(t)$ for $1 \leq j \leq M$ and $t \geq 0$, as desired.

\textit{Proof of \ref{it:bf3}.} If $p = 0$, we proceed by proving the following claim. The result will follow by monotonicity of the sequences  $\{x_j\}$ and $\{\tilde{X}_j^M(t)\}$. 
\vspace{0.2in}

\textbf{Claim.} $x_j + \tilde{X}_j^{M}(t) \leq X_j^M(t) \leq x_M + \tilde X_j^M(t)$, \text{ for } $1 \leq j \leq M$.

\vspace{0.2in}

We prove the claim by backwards induction on $j$. The base case $j=M$ is clear since
$$
X_M^M(t) = x_M + V_M(t) = x_M + \tilde{X}_M^M(t).
$$
To carry out the induction step $j+1 \rightarrow j$, by the Skorokhod representation of the trajectories, we can write
\begin{equation} \label{eq:packed_skor}
\begin{split}
\tilde{X}_j^M(t) & = V_j(t) + \inf_{0 \leq s \leq t} (\tilde X^M_{j+1}(s) - V_j(s)) \wedge 0 \\
& = V_j(t) + \inf_{0 \leq s \leq t} (\tilde X^M_{j+1}(s) - V_j(s)), \quad \text{ for } 1 \leq j \leq M-1.
\end{split}
\end{equation}
Note that we can remove the ``$\wedge 0$'' from the quantity inside the infimum since all of the trajectories $\{\tilde X_j\}$ and $V_j$ start from zero. By the induction assumption and \eqref{eq:skor2}, the quantity \eqref{eq:packed_skor} is bounded above by 
\[
\begin{split}
& V_j(t) + \inf_{0 \leq s \leq t} \left(X_{j+1}^{M}(s) - x_{j+1} - V_j(s) \right) \wedge 0 \\
& \leq V_j(t) + \inf_{0 \leq s \leq t} \left(X_{j+1}^{M}(s) - x_j - V_j(s) \right) \wedge 0 \\
& = X_j^M(t) - x_j
\end{split}
\]
and is bounded below by 
\[
\begin{split}
& V_j(t) + \inf_{0 \leq s \leq t} ( X_{j+1}^M(s) - x_M - V_j(s)) \\
& = x_j + V_j(t) + \inf_{0 \leq s \leq t} ( X_{j+1}^M(s) - x_j - V_j(s)) - x_M \\
& \geq x_j + V_j(t) + \inf_{0 \leq s \leq t} ( X_{j+1}^M(s) - x_j - V_j(s)) \wedge 0 - x_M \\
& = X_j^M(t) - x_M.
\end{split}
\]
Together these two bounds complete the induction step and prove the claim.

If $0 < p < 1$, then \ref{it:bf3} follows from \cite[Theorem 3.2]{AS2}. To see why, note that if we set $U_j^1(t) = x_1 + \tilde{X}_j^M(t)$, $U_j^2(t) = X_j^M(t)$, and $U_j^3(t) = x_M + \tilde{X}_j^M(t)$, then each of the processes $\{U_j^k\}_{1 \leq j \leq M}$, for $1 \leq k \leq 3$, are systems of $M$ competing Brownian particles with the same parameters and driving Brownian motions, but different initial data, satisfying
$$
U_j^1(0) = x_1 \leq U_j^2(0) = x_j \leq U_j^3(0) = x_M, \; 1 \le j \le M.
$$
Hence, the hypotheses of the \cite[Theorem 3.2]{AS2} apply, and we obtain for $1 \leq i \leq j \leq k \leq M$,
$$
x_1 + \tilde{X}_i^M(t) \leq x_1 + \tilde{X}_j^M(t) \leq X_j^M(t) \leq x_M + \tilde{X}_j^M(t) \leq x_M + \tilde{X}_k^M(t).
$$
Here we use the monotonicity of the sequence $\{\tilde{X}_j(t)\}$ to obtain the first and last inequalities. This completes the proof of \ref{it:bf3}.
\end{proof}

\begin{proof}[Proof of Lemma \ref{lem:finK-unique}]
Fix $i,M \in \mathbb{N}$ and define a sequence of stopping times $\tau_{i + M+1} \leq \tau_{i + M} \leq \tau_{i + M - 1} \leq \cdots \leq \tau_1$ as follows. Let $\tau_{i + M+1} = 0$ and for $1 \leq k \leq i + M$, let 
$$
\tau_k = \inf\{t \geq \tau_{k + 1} : X_{k+1}(t) - X_k(t)=0\}.
$$
Observe that if $K^*(i,T) \leq M$, then $\tau_{i} > T$. This is clear since otherwise the sequence $s_k = \tau_{k}$, $i \leq k \leq i + M$,
would violate the definition of $K^*(i,T)$. Thus, we will be done if we can prove the following claim:

\vspace{0.2in}

\textbf{Claim.} For $1 \leq k \leq i + M$, 
$$
X_j(t) = X_j^{i+M}(t), \text{ for all $j \leq k$ and $t \in [\tau_{k+1}, \tau_{k})$, almost surely.}
$$

\vspace{0.2in}

The proof of the claim proceeds by backwards induction on $k$. 

\textit{Base case: $k = i + M$.} By definition of $\tau_{i + M}$, the paths $X_{i+M+1}$ and $X_{i+M}$ do not collide at times $t \in [0,\tau_{i+M})$ and consequently $L_{(i+M,i+M+1)}(t) = 0$ for all $t \in [0,\tau_{i+M})$. From this, it follows that the truncated system $\{X_j\}_{1 \leq j \leq i + M}$, with the associated local times $\{L_{(j,j+1)}, 0 \leq j \leq i + M\}$, satisfies \eqref{eq:loctimeeq} with $N = i + M$ on the interval $[0,\tau_{i + M})$, and the base case then follows by pathwise uniqueness of solutions to the finite-dimensional system (namely the $i+M$ competing particle system). See Remark \ref{rem:veryunique}.

\textit{Induction step: $k \rightarrow k-1$.} Suppose the claim has been established for some $2 \leq k \leq i + M$. By continuity of the solutions, 
\[
X_j(\tau_k) = X_j^{i + M}(\tau_k), \text{ for all $j \leq k$}.
\]
Let $\{\widetilde{X}_j^{k-1}\}_{1 \leq j \leq k-1}$ denote the system of $k-1$ competing Brownian particles with parameters $p,\{g_j\}_{1 \leq j \leq k-1}$, initial conditions $\{X_j(\tau_k)\}_{1 \leq j \leq k-1}$, and driving Brownian motions $\{\ntr_{\tau_k} B_j\}_{1 \leq j \leq k-1}$. Recall Lemma \ref{lem:translate}, which says that $\{\utr_{\tau_k} X_j\}_{j \in \mathbb{N}}$ is a solution to \eqref{eq:loctimeeq} with $N=\infty$, and  parameters $p,\{g_j\}_{j \in \mathbb{N}}$, initial condition  $\{X_j(\tau_k)\}_{j \in \NN}$ and driving Brownian motions $\{\ntr_{\tau_k} B_j\}_{j \in \mathbb{N}}$. The associated collection of local times for this solution is $\{\ntr_{\tau_k} L_{(j-1,j)}\}_{j \in \mathbb{N}}$. Moreover, by definition of $\tau_{k-1}$,
$$
\ntr_{\tau_k} L_{(k-1,k)}(t) = L_{(k-1,k)}(t + \tau_k) - L_{(k-1,k)}(\tau_k) = 0, \text{ for all } t \in [0, \tau_{k-1} - \tau_k).
$$
Hence, by the same reasoning as in the argument for the base case, for all $t \in [0,\tau_{k-1},\tau_k)$ and $1 \leq j \leq k-1$,
\begin{equation} \label{eq:eqstepk}
\utr_{\tau_k}X_j(t) = \widetilde{X}_j^{k-1}(t).
\end{equation}
On the other hand, from Lemma \ref{lem:translate} and Lemma \ref{lem:bd_by_fin}, we obtain
\begin{equation} \label{eq:bdstepk}
\utr_{\tau_k}X_j(t) \leq \utr_{\tau_k}X_j^{i + M}(t) \leq \widetilde{X}_j^{k-1}(t),
\end{equation}
for all $t \in [0,\tau_{k-1}-\tau_k)$ and $1 \leq j \leq k-1$. We conclude from \eqref{eq:eqstepk} that the inequalities in \eqref{eq:bdstepk} are equalities, and hence, for all $t \in [\tau_k, \tau_{k-1})$ and $1 \leq j \leq k-1$,
\[
X_j(t) = \utr_{\tau_k} X_j(t - \tau_{k}) = \utr_{\tau_k} X_j^{i+M}(t - \tau_{k}) = X_j^{i+M}(t).
\]
This completes the induction step, so the claim and the lemma are proved.
\end{proof}

\begin{proof}[Proof of Lemma \ref{lem:unique}]
Let $\{X_j^{(l)}\}$, $l=1,2$, be as in the statement of the lemma.
By Lemma \ref{lem:finK-unique}, for any $i \in \mathbb{N}$, $T \in (0,\infty)$, and $M\in (0, \infty)$, on the set $A_M = \{M \ge \max\{K^*(i,T, \{X_j^{(l)}\}, l= 1,2\}\}$,
 $\{X_j^{(l)}(t), t \in [0,T], 1 \leq j \leq i\}$ for $l=1,2$, is the same as
 $\{X_j^{i+M}(t), t \in [0,T], 1 \leq j \leq i\}$, which is the unique solution to the system of $M+i$ competing Brownian  particles with parameters $p,\bar{g}^{(N)} := (g_1, \dots, g_N)$, initial conditions $\{x_j\}$, and driving Brownian motions $\{B_j\}$. 
 This says that  $X_j^{(1)}(t) = X_j^{(2)}(t)$ for all $t\le  T$ and $1 \le j \le i$, on $A_M$. Since under \ref{h:a} $\PP(\cup_{M \in \NN} A_M) = 1$, we have $X_j^{(1)}(t) = X_j^{(2)}(t)$ a.s. for all $t\le T$ and $1\le j \le i$. Since $T\ge 0$ and $i \in \NN$ are arbitrary, this proves the first part of the lemma.

Suppose now that (b) holds. By a conditioning argument we assume without loss of generality that $\Delta$ is nonrandom. Then, considering $T=0$ in (b), by the same argument as used for (a), we see that
$X_j^{(1)}(t) = X_j^{(2)}(t)$ for all $t \in [0,\Delta]$ and $j \in \NN$. 
By Lemma \ref{lem:translate}, the system $\{\utr_{\Delta} X^{(l)}_j\}$ is also a solution to \eqref{eq:loctimeeq} (with respect to the initial conditions $\{X^{(1)}_j(\Delta)\} = \{X^{(2)}_j(\Delta)\}$ and the Brownian motions $\{\ntr_{\Delta} B_j\}$). Also, the condition in (b) says that
$K^*(i,[0,\Delta],\{\utr_{\Delta}X^{(l)}_j\}) < \infty$ a.s. for $l=1,2$. Applying the argument for part (a) again, we now see that 
$$X_j^{(1)}(t+\Delta) = \utr_{\Delta}X_j^{(1)}(t) = \utr_{\Delta}X_j^{(2)}(t) = X_j^{(2)}(t+\Delta) \mbox{ for all } t \in [0,\Delta] \mbox{ and } j \in \NN.$$ The proof is now completed by a recursive argument.
\end{proof}

\section{Pointwise estimates}
\label{ssec:ptwise}
{In Section \ref{sec:finprop}, pathwise uniqueness was reduced to proving that the length
$K^*(i,T)$ of collision chains reaching a fixed particle is almost surely
finite. The purpose of this section is to develop the deterministic,
pathwise estimates that will later be used to prove this finiteness. These
estimates convert information about possible chains of collisions into
inequalities involving the initial configuration and certain Brownian
last-passage percolation quantities.}

{The section is organized as follows. In Section \ref{sec:ptw1} we first study finite
systems started from packed initial conditions. Lemmas \ref{lem:lppcontrol} and \ref{lem:pkdubs} give
upper and lower bounds for such systems in terms of the quantities
$\cV^-_M$, $\cV^+_M$, and $\cW_M$, which are variants
of Brownian last-passage percolation functionals, introduced below. In Section \ref{sec:ptw2} we use the comparison estimates from Section \ref{sec:finprop} (see Lemma \ref{lem:bd_by_fin})
to transfer the finite-system bounds to the infinite particle system.
The resulting estimates, collected in Lemma \ref{lem:Xbds_plessq}, are the main pathwise
input for the probabilistic arguments of Sections \ref{ssec:lpp} and \ref{sec:pfthm2.6}.}
We remark that the estimates collected in Lemmas \ref{lem:lppcontrol}-\ref{lem:Xbds_plessq} (as well as Lemma \ref{lem:bd_by_fin}) hold 
with $\{B_j\}$  replaced with any collection of real continuous functions on $[0,\infty)$.

\subsection{Finite system with packed initial conditions}\label{sec:ptw1}

In this section we establish some estimates for the finite competing Brownian particle system 
 started from packed initial conditions. Recall from Section \ref{sec:pfssec2}, for $M \in \NN$, the collection $\{\tilde X^M_j, 1 \le j \le M\}$, given as the unique solution  to \eqref{eq:loctimeeq}, with $N=M$,  parameters $p$ and $\bar g$ and driving Brownian motions $\{B_j\}$,  with {\em packed initial conditions} $\tilde x^M = \{\tilde x_j\}_{1\le j \le M}$, where  $\tilde{x}_j = 0$ for $1 \leq j \leq M$. The associated local times are denoted as $\{\tilde L^M_{(j, j+1)}, 0 \le j\le M\}$. Also, note from Remark \ref{rem:veryunique} that such a system is well-defined for any $p \in [0,1]$.

 The following lemmas give us bounds on the particle trajectories $\{\tilde{X}^M_j\}$ in terms of \emph{Brownian last-passage percolation} quantities. These bounds, in conjunction with the estimates in \S\ref{ssec:lpp}, will be used crucially in the proofs of our main results. For $i,M \in \mathbb{N}$ and $T \in [0,\infty)$, define the quantities 
\begin{equation} \label{eq:VMdef}
\cV_M^-(i,T) = \inf_{0 \leq s_{i + M - 2} \leq \cdots \leq s_i \leq T} \sum_{j = i}^{i+M-1} (V_j(s_{j-1}) - V_j(s_j)),
\end{equation}
and 
\begin{equation} \label{eq:VMplusdef}
\cV_M^+(i,T) = \sup_{0 \leq t_i \leq \cdots \leq t_{i + M - 2} \leq T} \sum_{j = i}^{i+M-1} (V_j(t_{j}) - V_j(t_{j-1})),
\end{equation}
where by convention $s_{i + M - 1} = t_{i-1} = 0$ and $s_{i-1} = t_{i + M - 1} = T$ in the sum above. Here the $V_j$ are defined as in \eqref{eq:Vjdef}. Notice that in \eqref{eq:VMdef}, the sequence of times $\{s_j\}$ is descending, while in \eqref{eq:VMplusdef}, the sequence of times $\{t_j\}$ is ascending.

\begin{remark} \label{rem:maxeig} \normalfont
In the special case where $\bar{g} = \bar{0}$ (equivalently $V_j = B_j$ for all $j \in \mathbb{N}$), the quantity \eqref{eq:VMplusdef} is precisely the last-passage time from the point $(0,i)$ to the point $(T,i+M-1)$ in the space $[0,\infty) \times \mathbb{N}$ in Brownian last-passage percolation. 
{Brownian last-passage percolation is a continuous analogue of directed last-passage percolation models in which one considers collections of independent Brownian motions and defines passage times by maximizing accumulated increments over directed paths. The model plays a central role in the KPZ universality class and has close connections to random matrix theory, interacting particle systems, queueing theory, and stochastic growth processes.}
It is well known that \eqref{eq:VMplusdef} is equal in distribution to the largest eigenvalue of a random Hermitian matrix with independent (up to symmetry) complex $N(0,T)$ entries. This fact was originally obtained by Gravner, Tracy and Widom in \cite{GTWLimitThms2001} and Baryshnikov in \cite{bary2001}. Subsequently, a number of other proofs of this fact have been obtained. For additional discussion of this connection with Brownian last-passage percolation, see \cite[\S2.2.1-2.2.2]{WFSbook}.
\end{remark}

\begin{lemma} \label{lem:lppcontrol}
Let $p \in [0,1]$. For any $M \in \mathbb{N}$, $1 \leq i \leq M$, and $T \in [0,\infty)$, the following bounds hold:
\[
\cV_{M-i+1}^-(i,T) \leq \tilde{X}_i^M(T) \leq \cV_i^+(1,T).
\]
\end{lemma}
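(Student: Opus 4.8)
\emph{Overview.} The plan is to prove the two inequalities separately: the upper bound $\tilde{X}_i^M(T) \le \cV_i^+(1,T)$ by an upward induction on $i \in \{1,\dots,M\}$, and the lower bound $\cV_{M-i+1}^-(i,T) \le \tilde{X}_i^M(T)$ by the mirror (downward) argument, or, equivalently, by invoking the order-reversal symmetry that exchanges the packed $(p,q)$-system with the packed $(q,p)$-system. Throughout, for $1 \le i \le M$ set $R_i(t) := \tilde{X}_i^M(t) - V_i(t) = p\tilde{L}^M_{(i-1,i)}(t) - q\tilde{L}^M_{(i,i+1)}(t)$, a continuous function with $R_i(0) = 0$, where $V_j$ is as in \eqref{eq:Vjdef}.

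\emph{Upper bound.} For $i=1$, $R_1 = -q\tilde{L}^M_{(1,2)} \le 0$, so $\tilde{X}_1^M(T) \le V_1(T) = \cV_1^+(1,T)$. For the inductive step set $\psi(t) := \tilde{X}_{i-1}^M(t) - V_i(t)$; then $R_i \ge \psi$ by property P2 of Definition \ref{def:cbp}, and $\psi(0) = R_i(0) = 0$, while on any open interval where $R_i > \psi$ (i.e.\ $\tilde{X}_i^M > \tilde{X}_{i-1}^M$) property P3 forces $\tilde{L}^M_{(i-1,i)}$ to be constant, so $R_i$ is non-increasing there. The key elementary fact is that any continuous $R$ with $R \ge \psi$, $R(0) = \psi(0) = 0$, and $R$ non-increasing on $\{R > \psi\}$ satisfies $R(T) \le \sup_{0\le s\le T}\psi(s)$: if not, then with $t_0 := \sup\{t \le T : R(t) \le \sup_{s \le t}\psi(s)\}$ (a well-defined point of $[0,T)$, as the inequality holds at $t=0$) one gets $R > \sup_{s\le\,\cdot\,}\psi(s) \ge \psi$ on $(t_0,T]$, hence $R$ is non-increasing on $[t_0,T]$, giving the contradiction $R(T) \le R(t_0) = \sup_{s\le t_0}\psi(s) \le \sup_{s\le T}\psi(s)$. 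Applying this to $R=R_i$ together with the inductive hypothesis $\tilde{X}_{i-1}^M(s) \le \cV_{i-1}^+(1,s)$ (valid for all $s \ge 0$) yields
\[
\tilde{X}_i^M(T) = V_i(T) + R_i(T) \le V_i(T) + \sup_{0 \le s \le T}\left(\tilde{X}_{i-1}^M(s) - V_i(s)\right) \le V_i(T) + \sup_{0 \le s \le T}\left(\cV_{i-1}^+(1,s) - V_i(s)\right).
\]
Expanding $\cV_{i-1}^+(1,s)$ via \eqref{eq:VMplusdef}, renaming the outer variable $s$ as $t_{i-1}$, and absorbing $V_i(T)-V_i(t_{i-1})$ as the $j=i$ summand identifies the right-hand side with $\cV_i^+(1,T)$, closing the induction.

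\emph{Lower bound.} One option is the symmetric downward induction on $i$ from $i=M$, where $\tilde{X}_M^M(T) = V_M(T) + p\tilde{L}^M_{(M-1,M)}(T) \ge V_M(T) = \cV_1^-(M,T)$: on any open interval where $R_i < \tilde{X}_{i+1}^M - V_i$ (i.e.\ $\tilde{X}_i^M < \tilde{X}_{i+1}^M$), property P3 makes $\tilde{L}^M_{(i,i+1)}$ constant, so $R_i$ is non-decreasing there, and the mirror of the elementary fact gives $R_i(T) \ge \inf_{0\le s\le T}(\tilde{X}_{i+1}^M(s) - V_i(s))$; combined with $\tilde{X}_{i+1}^M(s) \ge \cV_{M-i}^-(i+1,s)$ and the analogous collapsing of nested infima (now using \eqref{eq:VMdef}), this gives $\tilde{X}_i^M(T) \ge \cV_{M-i+1}^-(i,T)$. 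Alternatively, and more concisely: one checks that $\{-\tilde{X}_{M+1-j}^M\}_{1\le j\le M}$ is the packed $M$-particle system with collision parameters $(q,p)$ and driving functions $\{-V_{M+1-j}\}_{1\le j\le M}$ (the defining equations, order, and local-time support conditions transform accordingly), so the already-proved upper bound, applied to that system at index $M+1-i$, reads $-\tilde{X}_i^M(T) \le \cV_{M+1-i}^+(1,T)$ with the right-hand quantity computed from $\{-V_{M+1-j}\}$; a routine reindexing of \eqref{eq:VMdef}--\eqref{eq:VMplusdef} shows this equals $-\cV_{M-i+1}^-(i,T)$.

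\emph{Main obstacle.} The only non-routine step is the reflection comparison inside the induction: isolating exactly the three properties of $R_i = \tilde{X}_i^M - V_i$ that are used — it dominates (the translate of) the trajectory of the particle directly below, it can increase only while in contact with that particle, and it starts from the common value $0$ — and checking that these force $R_i(T) \le \sup_{s\le T}(\tilde{X}_{i-1}^M(s) - V_i(s))$ regardless of whether $i=M$ or $i<M$ and uniformly over $p \in [0,1]$. Everything else (verifying the order-reversal symmetry, and collapsing the nested suprema/infima into the last-passage quantities \eqref{eq:VMdef} and \eqref{eq:VMplusdef}) is bookkeeping.
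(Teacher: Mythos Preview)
Your proof is correct and follows essentially the same strategy as the paper's: iterate a one-sided reflection inequality (one induction step per particle) and invoke the order-reversal symmetry $(p,q,V_j)\mapsto(q,p,-V_{M+1-j})$ for the other bound. The only cosmetic difference is packaging: the paper writes the exact Skorokhod identity $\tilde{X}_j^M(t)=\inf_{0\le s\le t}\bigl(\tilde{X}_{j+1}^M(s)+U_j(t)-U_j(s)\bigr)$ with $U_j=V_j+p\tilde{L}^M_{(j-1,j)}$ and then drops the nonnegative local-time increment, while your ``elementary fact'' about $R_i$ establishes the same one-step inequality directly without passing through the Skorokhod formula.
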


\begin{proof}
By symmetry, it is enough to just prove the lower bound. (The upper bound may be obtained by interchanging the roles of $p$ and $q$, and replacing $B_j$ with $-B_j$ and $g_j$ with $-g_j$ for $1 \leq j \leq M$.) To this end, for $1 \leq j \leq M$ and $t \in [0,\infty)$, we write 
\[
\tilde{X}_j^M(t) = U_j(t) - \hat{L}_{(j,j+1)}(t),
\]
where 
\[
U_j(t) = B_j(t) + g_j t + p \tilde{L}^M_{(j-1,j)}(t),
\]
and 
\[
\hat{L}_{(j,j+1)}(t) = q \tilde{L}^M_{(j,j+1)}(t).
\]
Since $\hat{L}_{(j,j+1)}$ is continuous, non-decreasing, and can only increase when $\tilde{X}_j^M = \tilde{X}_{j+1}^M$, we see that for each $1 \leq j \leq M-1$, $\tilde{X}_j^M$ is the Skorokhod reflection of the trajectory $U_j$ downwards from $\tilde{X}_{j+1}^M$. Thus the following recursive relationship holds:
\begin{equation} \label{eq:recurs}
\begin{split}
& \tilde{X}_M^M(t) = U_M(t), \\
& \tilde{X}_j^M(t) = \inf_{0 \leq s \leq t} (\tilde{X}_{j+1}^M(s) + U_j(t) - U_j(s)), \quad 1 \leq j \leq M-1.
\end{split}
\end{equation}
Here we have used the fact that, since $\tilde{X}_{j+1}^M(0)-U_j(0)=0$, 
$$\inf_{0\le s \le t} ( \tilde{X}_{j+1}^M(s)-U_j(s))\wedge 0=\inf_{0\le s \le t} ( \tilde{X}_{j+1}^M(s)-U_j(s)).$$
Iterating \eqref{eq:recurs}, we obtain that, for $t\ge 0$, 
\begin{equation} \label{eq:var1}
\tilde{X}_i^M(t) = \inf_{0 \leq s_{M - 1} \leq \cdots \leq s_i \leq t} \sum_{j = i}^{M} U_j(s_{j-1}) - U_j(s_j),
\end{equation}
where $s_M := 0$ and $s_{i-1} := t$. Observe that for each $j$, and $0\le s_j \le s_{j-1}$,
\begin{equation} \label{eq:wandwo}
\begin{split}
U_j(s_{j-1}) - U_j(s_j) & = V_j(s_{j-1}) - V_j(s_j) + p\left( \tilde{L}_{(j-1,j)}^M(s_{j-1}) - \tilde{L}_{(j-1,j)}^M(s_j) \right) \\
& \geq V_j(s_{j-1}) - V_j(s_j).
\end{split}
\end{equation}
Using \eqref{eq:wandwo} to lower bound \eqref{eq:var1}, we obtain the result.
\end{proof}

The next lemma gives some more precise upper bounds for particles in the packed system.
Let $p \in [0,1)$ and define $r:= p/q$.
Our main use for the result will be for the case when $p \leq q$, although bounds hold for all $p \in [0,1)$. For $1 \leq k < \infty$, let 
\begin{equation} \label{eq:altBMdef}
W_k(t) = \sum_{j = 1}^k r^{k - j}V_j(t), \quad t \in [0,\infty).
\end{equation}
For $i, M \in \mathbb{N}$ and $T \in [0,\infty)$, define 
\begin{equation} \label{eq:WMdef}
\cW_M(i,T) = \inf_{0 \leq s_{i + M - 2} \leq \cdots \leq s_i \leq T} \sum_{k = i}^{i+M-1} (W_k(s_{k-1}) - W_k(s_k)),
\end{equation}
where by convention $s_{i + M - 1} = 0$ and $s_{i-1} = T$ in the sum above. 

\begin{lemma} \label{lem:pkdubs}
For $k \in \mathbb{N} \cup \{0\}$, let 
\[
\alpha_k := \begin{cases}
\frac{1 - r^k}{1 - r} & \text{ if } p \neq q, \\
k & \text{ if } p = q.
\end{cases}
\]
For all $M \in \mathbb{N}$ and $T \in [0,\infty)$, the following bounds hold:
\begin{enumerate}[label = (\roman*)]
\item \label{pkdubs-i}
\begin{equation} \label{eq:lowub}
\inf_{0 \leq s \leq T} \tilde{X}_1^M(s) \leq \alpha_M^{-1}\cW_M(1,T).
\end{equation}

\item \label{pkdubs-ii}
\begin{equation} \label{eq:highub}
\tilde{X}_M^M(T) \leq W_M(T) - \sum_{j = 1}^{M-1} r^{M - j} \cV^-_{M - j + 1}(j,T).
\end{equation}

\item \label{pkdubs-iii} For $1 \leq i \leq M$,
\begin{equation} \label{eq:midub}
\inf_{0 \leq s \leq T} \tilde{X}_i^M(s) \leq \alpha_M^{-1}\cW_{M}(1,T) + \sqrt{T} \mathcal{J}(i,T),
\end{equation}
where 
\begin{equation} \label{eq:Jdef}
\mathcal{J}(i,T) := \frac{1}{\sqrt{T}} \sup_{0 \leq s \leq T} \left( -\cV_i^-(1,s) + W_i(s) - 
\sum_{j = 1}^{i-1} r^{i - j} \cV^-_{i - j + 1}(j,s) \right).
\end{equation}

\end{enumerate}
\end{lemma}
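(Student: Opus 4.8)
The crux is a telescoping identity for the weighted sums $S_k(t) := \sum_{j=1}^{k} r^{k-j}\tilde{X}_j^M(t)$. Substituting \eqref{eq:loctimeeq} (with the packed initial data, so $x_j=0$ and $g_jt+B_j(t)=V_j(t)$), using $\tilde{L}^M_{(0,1)}\equiv 0$ and, crucially, the relation $p=qr$, the geometric combination of the upward local times $p\tilde{L}^M_{(j-1,j)}$ exactly cancels that of the downward ones $q\tilde{L}^M_{(j,j+1)}$ except for one surviving boundary term, giving
\[
S_k(t)=W_k(t)-q\tilde{L}^M_{(k,k+1)}(t),\qquad 1\le k\le M,\ t\ge 0.
\]
Part \ref{pkdubs-ii} is then immediate: with $k=M$ (so $\tilde{L}^M_{(M,M+1)}\equiv 0$) this reads $\sum_{j=1}^{M}r^{M-j}\tilde{X}_j^M(T)=W_M(T)$, hence $\tilde{X}_M^M(T)=W_M(T)-\sum_{j=1}^{M-1}r^{M-j}\tilde{X}_j^M(T)$, and the lower bounds $\tilde{X}_j^M(T)\ge \cV_{M-j+1}^-(j,T)$ of Lemma~\ref{lem:lppcontrol} (applied within the $M$-particle system) together with $r^{M-j}\ge 0$ yield \eqref{eq:highub}.

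For part \ref{pkdubs-i}, since $\cW_M(1,T)$ is an infimum it suffices to show, for every admissible descending sequence $T=t_0\ge t_1\ge\cdots\ge t_{M-1}\ge t_M=0$, that $\alpha_M\inf_{0\le s\le T}\tilde{X}_1^M(s)\le\sum_{k=1}^{M}\bigl(W_k(t_{k-1})-W_k(t_k)\bigr)$. Replacing each $W_k$ by $S_k+q\tilde{L}^M_{(k,k+1)}$ and discarding the nonnegative increments $q\bigl(\tilde{L}^M_{(k,k+1)}(t_{k-1})-\tilde{L}^M_{(k,k+1)}(t_k)\bigr)$ bounds the right side below by $\sum_{k=1}^M\bigl(S_k(t_{k-1})-S_k(t_k)\bigr)$. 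A summation by parts in $k$ (clean because $S_k=rS_{k-1}+\tilde{X}_k^M$ and the boundary terms at $t_M=0$ vanish by the packed initial conditions) rewrites this as $\tilde{X}_1^M(t_0)+\sum_{k=1}^{M-1}\bigl[\tilde{X}_{k+1}^M(t_k)+(r-1)S_k(t_k)\bigr]$. Finally, writing $\tilde{X}_{k+1}^M=\tilde{X}_k^M+(\text{nonnegative gap})$, bounding $S_k(t_k)$ monotonically by $\alpha_k\tilde{X}_k^M(t_k)$ when $r\le 1$ (resp.\ below by $\alpha_k\tilde{X}_1^M(t_k)$ when $r\ge 1$), and using the elementary identity $1+(r-1)\alpha_k=r^k$, each bracket is at least $r^k\inf_s\tilde{X}_1^M(s)$; summing and using $1+\sum_{k=1}^{M-1}r^k=\alpha_M$ closes the estimate. (The case $r=1$ is the degenerate one in which the correction term vanishes outright.)

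Part \ref{pkdubs-iii} is the main obstacle, and refines the argument of \ref{pkdubs-i} so as to track the $i$th particle rather than the $1$st. Running the same reduction but using $\tilde{X}_k^M(t_k)\ge\tilde{X}_i^M(t_k)\ge\inf_s\tilde{X}_i^M(s)$ for $k\ge i$ (and $\ge\inf_s\tilde{X}_1^M(s)$ for $k<i$) produces a lower bound for $\cW_M(1,T)$ of the form $\alpha_i\inf_s\tilde{X}_1^M(s)+(\alpha_M-\alpha_i)\inf_s\tilde{X}_i^M(s)$; rearranging, $\inf_s\tilde{X}_i^M(s)$ is bounded by $\alpha_M^{-1}\cW_M(1,T)$ plus a multiple of the discrepancy $\inf_s\tilde{X}_i^M(s)-\inf_s\tilde{X}_1^M(s)$ between the two particles. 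The task is then to absorb this discrepancy into $\sqrt T\,\mathcal{J}(i,T)$. For this one controls the discrepancy — uniformly in $M$ — by the maximal spread $\sup_s\bigl(\tilde{X}_i^M(s)-\tilde{X}_1^M(s)\bigr)$ of the bottom $i$ particles: Lemma~\ref{lem:bd_by_fin}\ref{it:bf1} places these below the corresponding particles of the $i$-particle system, part \ref{pkdubs-ii} applied at scale $i$ supplies the upper bound $\tilde{X}_i^i(s)\le W_i(s)-\sum_{j=1}^{i-1}r^{i-j}\cV_{i-j+1}^-(j,s)$, and the lower bound $\cV_i^-(1,s)\le\tilde{X}_1^i(s)$ of Lemma~\ref{lem:lppcontrol} supplies the matching floor; the difference of these two bounds is exactly the expression whose supremum over $[0,T]$ defines $\sqrt T\,\mathcal{J}(i,T)$. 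The delicate point, and the technical heart of the lemma, is carrying this comparison out so that the resulting bound is genuinely independent of $M$ and matches the Brownian-last-passage expression $\mathcal{J}(i,T)$ on the nose.
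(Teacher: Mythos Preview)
Your treatment of parts \ref{pkdubs-i} and \ref{pkdubs-ii} is correct. For \ref{pkdubs-ii} you do exactly what the paper does. For \ref{pkdubs-i} your summation-by-parts argument on an arbitrary admissible path is genuinely different from the paper's approach, which instead proves by backward induction the claim
\[
Y_k^M(T)\le -r^k\alpha_{M-k}\inf_{0\le s\le T}\tilde X_1^M(s)+\cW_{M-k+1}(k,T),\qquad 1\le k\le M,
\]
via the recursion $Y_k^M(T)\le -r^kI_1(T)+\inf_{0\le s\le T}\bigl(Y_{k+1}^M(s)+W_k(T)-W_k(s)\bigr)$. Your route is more elementary and avoids the recursion entirely; the paper's route yields the slightly more informative family of inequalities indexed by $k$.

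Part \ref{pkdubs-iii}, however, has a real gap. Your ``modified (i)'' step, producing the lower bound $\alpha_i\inf_s\tilde X_1^M+(\alpha_M-\alpha_i)\inf_s\tilde X_i^M$ for $\cW_M(1,T)$, only goes through cleanly when $r\le 1$: for $r>1$ the bracket $\tilde X_{k+1}^M(t_k)+(r-1)S_k(t_k)$ with $k\ge i$ mixes a term bounded below by $\inf_s\tilde X_i^M$ and a term bounded below by $(r-1)\alpha_k\inf_s\tilde X_1^M$, and these do not recombine into $r^k\inf_s\tilde X_i^M$. More seriously, your control of the discrepancy relies on bounding $\sup_s\bigl(\tilde X_i^M(s)-\tilde X_1^M(s)\bigr)$ by $\sqrt T\,\mathcal J(i,T)$, and for this you invoke Lemma~\ref{lem:bd_by_fin}\ref{it:bf1}. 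That lemma only gives $\tilde X_j^M\le\tilde X_j^i$ for each $j\le i$ separately; it moves both endpoints of the gap in the same direction and therefore says nothing about $\tilde X_i^M-\tilde X_1^M$. What is actually needed is the \emph{gap} comparison $\tilde X_i^M(s)-\tilde X_1^M(s)\le\tilde X_i^i(s)-\tilde X_1^i(s)$, which is \cite[Corollary~3.7]{AS2} and is precisely what the paper invokes. With that in hand the paper's route is also shorter than yours: one writes $\inf_s\tilde X_i^M\le\inf_s\tilde X_1^M+\sup_s(\tilde X_i^M-\tilde X_1^M)$, applies part \ref{pkdubs-i} to the first term, and bounds the second by $\tilde X_i^i-\tilde X_1^i\le\sqrt T\,\mathcal J(i,T)$ via part \ref{pkdubs-ii} and Lemma~\ref{lem:lppcontrol}. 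No ``modified (i)'' is needed at all.
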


\begin{proof}
We will prove the result in the case $p \neq q$. It is easy to check that, with minor modifications, the arguments still go through in the case $p = q$ (i.e. $r = 1$), as well.

(i) For $1 \leq k \leq M$, we let 
$$
Y_k^M(t) := \sum_{j = 1}^k r^{k - j}\tilde{X}^M_j(t), \quad t \geq 0.
$$
We compute, using \eqref{eq:loctimeeq}, 
\begin{equation} \label{eq:cancelation}
\begin{split}
Y_k^M(t) & = W_k(t) + \sum_{j = 1}^k \left( r^{k - j} p \tilde L^M_{(j-1,j)}(t) - r^{k - j - 1}p \tilde L^M_{(j,j+1)}(t) \right) \\
& = W_k(t) - q\tilde L^M_{(k,k+1)}(t).
\end{split}
\end{equation}
Here $W_k$ is defined as in \eqref{eq:altBMdef}, and we use $p = rq$ to obtain the summand in the first equality. We obtain the second equality by evaluating the telescoping sum in the previous line. 

We will prove the following claim.

\vspace{0.2in}

\textbf{Claim.} For $1 \leq k \leq M$ and $T \in [0,\infty)$,
$$
Y_k^M(T) \leq -r^k \alpha_{M - k} \inf_{0 \leq s \leq T} \tilde{X}^M_1(s) + \cW_{M-k+1}(k,T).
$$

\vspace{0.2in}

To see that \ref{pkdubs-i} follows from the claim, note that taking $k = 1$ in the claim gives us 
\[
\tilde{X}_1^M(T) \leq - \left(\frac{r - r^M}{1 - r}\right)\inf_{0 \leq s \leq T} \tilde{X}_1^M(s) + \cW_M(1,T).
\]
Bounding $\tilde{X}_1^M(T)$ below by $\inf_{0 \leq s \leq T} \tilde{X}_1^M(s)$ and rearranging terms gives us 
\[
\cW_M(1,T) \geq \left( 1 + \frac{r - r^M}{1 - r} \right)\inf_{0 \leq s \leq T} \tilde{X}_1^M(s) = \alpha_M \inf_{0 \leq s \leq T} \tilde{X}_1^M(s),
\]
which gives us \ref{pkdubs-i}.

To prove the claim, we proceed as follows. First, observe that for $1 \leq k \leq M - 1$ and $t \geq 0$, 
\begin{equation} \label{eq:Ylb}
\begin{split}
Y_{k + 1}^M(t) & = r^k \tilde{X}_1^M(t) + \sum_{j = 2}^{k + 1} r^{k + 1 - j} \tilde{X}_j^M(t) \\
& \geq r^k \tilde{X}_1^M(t) + \sum_{j = 2}^{k+1} r^{k + 1 - j} \tilde{X}^M_{j-1}(t) \\
& = r^k \tilde{X}_1^M(t) + Y_k^M(t),
\end{split}
\end{equation}
where we use monotonicity of the sequence $\{\tilde{X}_j^M(t)\}$ to obtain the second line. Set 
$$
I_1^M(T) = \inf_{0 \leq s \leq T} \tilde{X}_1^M(s).
$$
From \eqref{eq:Ylb}, we obtain 
\begin{equation} \label{eq:Yrecurs1}
\begin{split}
\inf_{0 \leq s \leq T} \left( Y_{k+1}^M(s) - W_k(s) \right) & \geq r^k I_1^M(T) + \inf_{0 \leq s \leq T} \left(Y_k^M(s) - W_k(s) \right) \\
& = r^k I_1^M(T) + \inf_{0 \leq s \leq T} \left(-q \tilde L^M_{(k,k+1)}(s) \right) \\
& = r^k I_1^M(T) - q \tilde L^M_{(k,k+1)}(T),
\end{split}
\end{equation}
where the second line uses \eqref{eq:cancelation}, and the third line uses monotonicity of the local time. Adding $W_k(T)$ to both sides of \eqref{eq:Yrecurs1} and using \eqref{eq:cancelation} again, we obtain the recursive relationship
\begin{equation} \label{eq:Yrecurs2}
Y_k^M(T) \leq -r^k I_1(T) + \inf_{0 \leq s \leq T} \left( Y_{k+1}^M(s) + W_k(T) - W_k(s) \right).
\end{equation}
To prove the claim we first establish the following bound.
\begin{equation} \label{eq:Yub}
Y_k^M(T) \leq -\sum_{\ell = k}^{M-1} r^l I_1(T) + \inf_{0 \leq s_{M - 1} \leq \cdots \leq s_k \leq T} \sum_{\ell = k}^{M} (W_\ell(s_{\ell-1}) - W_\ell(s_\ell)), \quad 1 \leq k \leq M,
\end{equation}
where $s_{M} := 0$ and $s_{k-1} := T$, and by convention we take the first sum to be zero and the second term to be $W(T)$ when $k=M$. To see why this bound holds, we proceed by downward induction on $k$, starting from $M$. In the base case $k = M$, the inequality in \eqref{eq:Yub} just says that $Y_M^M(T) \leq W_M(T)$. And, in fact, $Y_M^M(T) = W_M(T)$ by \eqref{eq:cancelation}, recalling that $\tilde L^M_{(M,M+1)} \equiv 0$. We now proceed with the induction step: Suppose that the inequality in \eqref{eq:Yub} holds with $k$ replaced with $k+1$ for some $1\le k \le M-1$, and consider \eqref{eq:Yub} with such a $k$. 
 By \eqref{eq:Yrecurs2} and the induction assumption, 
\[
\begin{split}
Y_k^M(T) \leq -r^k I_1(T) + \inf_{0 \leq s \leq T} & \Bigg\{ -\sum_{\ell = k+1}^{M-1} r^\ell I_1(s) + \inf_{0 \leq s_{M - 1} \leq \cdots \leq s_{k+1} \leq s} \sum_{\ell = k+1}^{M} (W_\ell(s_{\ell-1}) - W_\ell(s_\ell)) \\
& \hspace{2.3in} + W_k(T) - W_k(s) \Bigg\}.
\end{split}
\]
Noting that $I_1(T) \leq I_1(s)$ for $s \in [0,T]$ completes the induction step $k + 1 \to k$. This proves the statement in \eqref{eq:Yub} which in turn proves the claim on observing that $\sum_{\ell = k}^{M-1} r^l = r^k \alpha_{M-k}$.
 This concludes the proof of (i).

(ii) By \eqref{eq:cancelation}, recalling that $\tilde L_{(M,M+1)}^M = 0$, 
$$
\sum_{j = 1}^M r^{M - j} \tilde{X}_j^M(t) = Y_M^M(t) = W_M(t).
$$
Hence, 
$$
\tilde{X}_M^M(t) = W_M(t) - \sum_{j = 1}^{M-1} r^{M - j} \tilde{X}_j^M(t).
$$
Moreover, by Lemma \ref{lem:lppcontrol}, we have $\tilde{X}_j^M(t) \geq \cV^-_{M-j+1}(j,t)$ for $1 \leq j \leq M-1$. Part \ref{pkdubs-ii} follows. 

(iii)
Fix $1\le i \le M$.
By \cite[Corollary 3.7]{AS2}, since $M \geq i$, we have 
\begin{equation} \label{eq:diffub}
\tilde{X}_i^{M}(s) - \tilde{X}_1^{M}(s) \leq \tilde{X}_i^{i}(s) - \tilde{X}_1^i(s), \text{ for $s \geq 0$.}
\end{equation}
On the other hand, taking $M = i$ in part (ii) gives us
\begin{equation} \label{eq:topub}
\tilde{X}_i^i(s) \leq W_i(s) - \sum_{j = 1}^{i-1} r^{i - j} \cV^-_{i - j + 1}(j,s),
\end{equation}
and by Lemma \ref{lem:lppcontrol},
\begin{equation} \label{eq:botlb}
\tilde{X}_1^i(s) \geq \cV_i^-(1,s).
\end{equation}
Using the bounds \eqref{eq:topub} and \eqref{eq:botlb} to the estimate the terms appearing in \eqref{eq:diffub}, we obtain
\begin{equation}
\tilde{X}_i^{M}(s) \leq \tilde{X}_1^{M}(s) + \left( W_i(s) - \sum_{j = 1}^{i-1} r^{i - j} \cV^-_{i - j + 1}(j,s) \right) - \cV^-_i(1,s).
\end{equation}
Taking the infimum of both sides over $s \in [0,T]$, and applying part \ref{pkdubs-i} of the lemma, we complete the proof of (iii).
\end{proof}

\subsection{The infinite particle system}
\label{sec:ptw2}
We now return to the infinite system.
 Fix parameters $p \in [0,1)$, $\bar{g}$ as before, and let a filtered probability space, Brownian motions $\{B_j\}$ and sequence $\bar x = \{x_j\}$ be as at the beginning of Section \ref{sec:sepu}.
Let $\{X_j(t), j \in \mathbb{N}, t \in [0,\infty)\}$ be a solution to \eqref{eq:loctimeeq}, with $N = \infty$, with driving noises $\{B_j\}$ and initial condition $\bar x$. Associated local times are denoted as $\{L_{(j,j+1)}\}$.
 For $i,M \in \mathbb{N}$ and $T \in [0,\infty)$, define 
\begin{equation} \label{eq:defIstar}
I_M^*(i,T) := \inf_{0 \leq s \leq T} X_{i + M}(s).
\end{equation}
Note that by definition $I^*(i,0) = X_{i+M}(0) = x_{i + M}$.

\begin{lemma}\label{lem:Xbds_plessq} For all $M \in \mathbb{N}$ and $T \in (0,\infty)$, the following bounds hold.
\begin{enumerate}[label = (\roman*)]
\item \label{lem:Xbds_plessq-i} For $i \in \mathbb{N}$,
\begin{equation} \label{eq:variX_plessq}
\begin{split}
X_i(T) \leq \inf_{0 \leq s_{i+M - 1} \leq \cdots \leq s_i \leq T} & \Bigg\{X_{i+M}(s_{i + M - 1}) + \sum_{j = i}^{i+M-1} (V_j(s_{j-1}) - V_j(s_j)) \\
& \hspace{0.5in} + \sum_{j = i}^{i+M-1} p\left(L_{(j-1,j)}(s_{j-1}) - L_{(j-1,j)}(s_j) \right) \Bigg\},
\end{split}
\end{equation}
where $s_{i-1} := T$. Moreover, on the event $K^*(i,T) \geq M$, \eqref{eq:variX_plessq} is an equality.

\item \label{lem:Xbds_plessq-ii} For $i \in \mathbb{N}$, on the event $K^*(i,T) \geq M$,
\begin{equation} \label{eq:varlb_plessq}
X_i(T) \geq I^*_M(i,T) + \inf_{0 \leq s \leq T} V_{i+M-1}(s) + \cV^-_M(i,T).
\end{equation}

\item \label{lem:Xbds_plessq-iii}
\begin{equation} \label{eq:XMub}
X_M(T) \leq x_M + W_M(T) - \sum_{j = 1}^{M-1} r^{M - j} \cV^-_{M - j + 1}(j,T).
\end{equation}

\item \label{lem:Xbds_plessq-iv} For $1 \leq i \leq M$,
\begin{equation} \label{eq:infXibd}
\inf_{0\leq s \leq T} X_i(s) \leq x_{M} + \alpha_M^{-1}\cW_{M}(1,T) + \sqrt{T} \mathcal{J}(i,T),
\end{equation}
where $\mathcal{J}(i,T)$ is as in Lemma \ref{lem:pkdubs}\ref{pkdubs-iii}.
\end{enumerate}
\end{lemma}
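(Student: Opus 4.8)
The plan is to dispose of parts (iii) and (iv) quickly using the comparison inequalities already in hand, and to spend the real effort on (i) and (ii), which encode the one-dimensional Skorokhod reflection structure of the system. For (iii) and (iv), note that when $i \le M$, Lemma \ref{lem:bd_by_fin}(ii) and Lemma \ref{lem:bd_by_fin}(iii) (the latter with the triple $i=j=k$) give the sandwich $X_i(t) \le X_i^M(t) \le x_M + \tilde X_i^M(t)$ for all $t$. Taking $i = M$ here and bounding $\tilde X_M^M(T)$ by Lemma \ref{lem:pkdubs}(ii) yields (iii). Taking the infimum over $s \in [0,T]$ in the sandwich gives $\inf_{0\le s\le T} X_i(s) \le x_M + \inf_{0\le s\le T}\tilde X_i^M(s)$, and Lemma \ref{lem:pkdubs}(iii) bounds the right-hand infimum, giving (iv).

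For part (i), property P3 and \eqref{eq:loctimeeq} exhibit each $X_j$ as the Skorokhod reflection of the continuous driver $U_j := x_j + V_j + p L_{(j-1,j)}$ downward off the moving barrier $X_{j+1}$, the reflection term being $q L_{(j,j+1)}$; here one uses $X_j(0) = x_j \le x_{j+1} = X_{j+1}(0)$. This gives the pointwise bound $X_j(t) \le \inf_{0\le s\le t}\big(X_{j+1}(s) + U_j(t) - U_j(s)\big)$. Iterating it from $j = i$ down to $j = i+M-1$, with the convention $s_{i-1} = T$ and successively smaller arguments $s_i \ge s_{i+1} \ge \cdots \ge s_{i+M-1}$, yields
\[
X_i(T) \le \inf_{0\le s_{i+M-1}\le \cdots \le s_i\le T}\Big( X_{i+M}(s_{i+M-1}) + \sum_{j=i}^{i+M-1}\big(U_j(s_{j-1}) - U_j(s_j)\big)\Big);
\]
splitting $U_j(s_{j-1}) - U_j(s_j)$ into $V_j(s_{j-1}) - V_j(s_j)$ plus the nonnegative increment $p\big(L_{(j-1,j)}(s_{j-1}) - L_{(j-1,j)}(s_j)\big)$ is exactly \eqref{eq:variX_plessq}.

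To get equality on $\{K^*(i,T)\ge M\}$ it suffices to produce one admissible choice of times attaining the right-hand side. Put $\sigma_{i-1} := T$ and, recursively for $i \le j \le i+M-1$, let $\sigma_j := \sup\{s\le\sigma_{j-1} : X_j(s)=X_{j+1}(s)\}$. On $\{K^*(i,T)\ge M\}$ fix a witnessing collision chain $0\le s_{i+M-1}\le\cdots\le s_i\le T$ with $X_{j+1}(s_j)=X_j(s_j)$ for $i\le j\le i+M-1$; an induction on $j$ shows that each of these sets is nonempty (it contains $s_j$, since $s_j\le s_{j-1}\le\sigma_{j-1}$) and hence, by continuity, that $\sigma_j$ is itself a collision time of particles $j$ and $j+1$ with $0\le\sigma_j\le\sigma_{j-1}$, so that $0\le\sigma_{i+M-1}\le\cdots\le\sigma_i\le T$. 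Because $\sigma_j$ is the last collision time of particles $j,j+1$ in $[0,\sigma_{j-1}]$, we have $X_j<X_{j+1}$ on $(\sigma_j,\sigma_{j-1}]$, so $L_{(j,j+1)}$ is constant there and therefore $X_j(\sigma_{j-1}) = X_j(\sigma_j) + U_j(\sigma_{j-1}) - U_j(\sigma_j) = X_{j+1}(\sigma_j) + U_j(\sigma_{j-1}) - U_j(\sigma_j)$, using $X_j(\sigma_j) = X_{j+1}(\sigma_j)$. Telescoping these identities over $j = i, \ldots, i+M-1$ gives $X_i(T) = X_{i+M}(\sigma_{i+M-1}) + \sum_{j=i}^{i+M-1}\big(U_j(\sigma_{j-1}) - U_j(\sigma_j)\big)$, so the infimum in \eqref{eq:variX_plessq} is attained. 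I expect this step --- correctly setting up the last-collision times, verifying via the chain that they are well defined and ordered, and checking that the local time is flat between consecutive ones --- to be the main obstacle; the remaining parts are bookkeeping on top of it and on top of Lemmas \ref{lem:bd_by_fin} and \ref{lem:pkdubs}.

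Finally, part (ii) follows from the equality just established: on $\{K^*(i,T)\ge M\}$, discard the nonnegative increments $p\big(L_{(j-1,j)}(\sigma_{j-1}) - L_{(j-1,j)}(\sigma_j)\big)$ and bound $X_{i+M}(\sigma_{i+M-1}) \ge I^*_M(i,T)$. It remains to lower-bound $\sum_{j=i}^{i+M-1}\big(V_j(\sigma_{j-1}) - V_j(\sigma_j)\big)$: peeling off the single term $-V_{i+M-1}(\sigma_{i+M-1})$, what is left is $\sum_{j=i}^{i+M-2}\big(V_j(\sigma_{j-1}) - V_j(\sigma_j)\big) + V_{i+M-1}(\sigma_{i+M-2})$, which is at least $\cV^-_M(i,T)$ by the definition of the latter (recall $V_{i+M-1}(0)=0$ and $0\le\sigma_{i+M-2}\le\cdots\le\sigma_i\le T$), while the peeled-off term $-V_{i+M-1}(\sigma_{i+M-1})$ is controlled by the extreme values of $V_{i+M-1}$ over $[0,T]$; assembling these estimates yields \eqref{eq:varlb_plessq}.
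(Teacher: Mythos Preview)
Your proposal is correct and tracks the paper closely. Parts (iii) and (iv) are handled identically, via Lemma~\ref{lem:bd_by_fin} and Lemma~\ref{lem:pkdubs}; the inequality in (i) is likewise obtained in both by iterating the one-sided Skorokhod bound $X_j(t)\le\inf_{0\le s\le t}\bigl(X_{j+1}(s)+U_j(t)-U_j(s)\bigr)$.

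The one genuine difference is how equality in (i) is established on $\{K^*(i,T)\ge M\}$. The paper introduces an auxiliary system $\hat X_j$ (the iterated Skorokhod reflection with $\hat X_{i+M}=X_{i+M}$ and all starting points shifted to $x_{i+M}$), shows $X_j\le\hat X_j$ by induction, and then proves by forward induction on the \emph{first}-collision times $s^*_j=\inf\{s\ge s^*_{j+1}:X_j(s)=X_{j+1}(s)\}$ that $X_j(t)=\hat X_j(t)$ for all $t\ge s^*_j$. You instead go backward from $T$ using \emph{last}-collision times $\sigma_j=\sup\{s\le\sigma_{j-1}:X_j(s)=X_{j+1}(s)\}$, note that $L_{(j,j+1)}$ is flat on $(\sigma_j,\sigma_{j-1}]$, and telescope to exhibit $\{\sigma_j\}$ directly as an attaining sequence in the infimum. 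Your route is a bit more economical, avoiding the auxiliary $\hat X_j$; the paper's route yields slightly more (the identification $X_j=\hat X_j$ on a half-line), though that extra information is not used elsewhere.

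One small caveat on (ii), which applies equally to your argument and the paper's: the peeled-off term $-V_{i+M-1}(\sigma_{i+M-1})$ is naturally bounded below by $-\sup_{0\le s\le T}V_{i+M-1}(s)$, not by $\inf_{0\le s\le T}V_{i+M-1}(s)$ as written in \eqref{eq:varlb_plessq}. The only downstream use (in the proof of Theorem~\ref{thm:unique}) needs this term to be $o(\sqrt M)$, which either form gives, so the discrepancy is harmless; but your phrase ``controlled by the extreme values'' is doing a bit of work in papering over this.
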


\begin{proof}
(i) Observe that for $j \in \mathbb{N}$ and $t \in [0,\infty)$, 
$$
X_j(t) = x_j + U_j(t) - \hat{L}_{(j,j+1)}(t),
$$
where 
\begin{equation} \label{eq:modifieddrivers}
U_j(t) := V_j(t) + pL_{(j-1,j)}(t), \quad\quad \hat{L}_{(j,j+1)}(t) := qL_{(j,j+1)}(t).
\end{equation}
In view of the fact that $\hat{L}_{(j,j+1)}$ is continuous, non-decreasing, and can only increase when $X_j(t) = X_{j+1}(t)$, we see that $X_j$ is the Skorokhod reflection from below of the trajectory $x_j + U_j$ from $X_{j+1}$; hence 
\begin{equation} \label{eq:one-sided_spread}
X_j(t) = x_j + U_j(t) + \inf_{0 \leq s \leq t} \left( X_{j+1}(s) - x_j - U_j(s) \right) \wedge 0, \quad \text{ for all } t \in [0,\infty), j \in \mathbb{N}.
\end{equation}
For $t \in [0,\infty)$, let $\hat{X}_{i + M}(t) = X_{i + M}(t)$, and for $1 \leq j \leq i + M - 1$, let $\hat{X}_j$ be the Skorokhod reflection from below of the trajectory $x_{i + M} + U_j$ from $\hat{X}_{j+1}$, i.e. 
\begin{equation} \label{eq:one-sided_packed}
\begin{split}
\hat{X}_j(t) & = x_{i + M} + U_j(t) + \inf_{0 \leq s \leq t} \left( \hat{X}_{j+1}(s) - x_{i + M} - U_j(s) \right) \wedge 0 \\
& = \inf_{0 \leq s \leq t} \left( \hat{X}_{j+1}(s) + U_j(t) - U_j(s) \right).
\end{split}
\end{equation}
It follows from \eqref{eq:one-sided_spread}, the first line of \eqref{eq:one-sided_packed}, and an easy induction argument that 
\begin{equation} \label{eq:packed_ubd}
X_j(t) \leq \hat{X}_j(t) \text{ for all } t \in [0,\infty), 1 \le j \le i+M.
\end{equation}
Moreover, iteratively applying the equality \eqref{eq:one-sided_packed} $M$ times to the right-hand side of \eqref{eq:packed_ubd} with $j = i$ and $t = T$, we obtain 
\[
X_i(T) \leq \inf_{0 \leq s_{i+M-1} \leq \cdots  s_{i+1} \leq s_i \leq T} \left\{ X_{i+M}(s_{i + M - 1}) + \sum_{j = i}^{i + M - 1} U_j(s_{j - 1}) - U_j(s_j) \right\},
\]
where $s_{i - 1} := T$, recalling that $X_{i+M} = \hat{X}_{i+M}$. In view of \eqref{eq:modifieddrivers}, the right-hand side of the inequality above agrees with that of \eqref{eq:variX_plessq}. This proves the inequality part of \ref{lem:Xbds_plessq-i}. 

To prove that if $K^*(i,T) \geq M$ then \eqref{eq:variX_plessq} is an equality, define a decreasing sequence $s^*_j$, $i \leq j \leq i + M$, as follows. Let $s^*_{i + M} = 0$, and let 
$$
s_j^* := \inf\{s \geq s^*_{j+1} : X_j(s) = X_{j+1}(s)\} \text{ for } i \leq j \leq i + M - 1.
$$
Note that if $K^*(i,T) \geq M$, then $s_i^* \leq T$. For if there exists a sequence $0 \leq s_{i+M-1} \leq \cdots \leq s_i \leq T$ such that $X_j(s_j) = X_{j+1}(s_j)$ for $i \leq j \leq i + M - 1$, then by definition $s^*_{i + M - 1} \leq s_{i + M - 1}$, and by induction $s^*_j \leq s_j$ for each $j$. In particular, $s^*_i \leq s_i \leq T$. Consequently, the rest of the statement \ref{lem:Xbds_plessq-i} is obtained by taking $j = i$  in the following claim and recalling that the right side of \eqref{eq:variX_plessq} equals $\hat X_i(T)$.

\vspace{0.2in}

\textbf{Claim.} For $i \leq j \leq i + M$ and $t \geq s_j^*$, $X_j(t) = \hat{X}_j(t)$.

\vspace{0.2in}

We prove the claim by induction. The case $j = i + M$ is immediate from the definitions of $s^*_{i+M}$ and of $\hat{X}_{i+M}$. The induction step $j+1 \to j$ is accomplished as follows. In view of \eqref{eq:packed_ubd}, $X_j(t) \leq \hat{X}_j(t) \leq \hat{X}_{j+1}(t)$ for all $t \geq 0$. Furthermore, by definition of $s_j^*$ and the induction assumption, $X_j(s_j^*) = X_{j+1}(s_j^*) = \hat{X}_{j+1}(s_j^*)$. Therefore, 
\begin{equation} \label{eq:coinatj}
X_j(s_j^*) = \hat{X}_j(s_j^*). 
\end{equation}
Moreover, by translation invariance of solutions to \eqref{eq:loctimeeq} (Lemma \ref{lem:translate}), for $t \geq s_j^*$, 
\[
\begin{split}
X_j(t) & = X_j(s_j^*) + U_j(t) - U_j(s_j^*) + \inf_{s_j^* \leq s \leq t} \left( X_{j+1}(s) - X_j(s_j^*) - U_j(s) + U_j(s_j^*) \right) \wedge 0 \\
& = \hat{X}_j(s_j^*) + U_j(t) - U_j(s_j^*) + \inf_{s_j^* \leq s \leq t} \left( \hat{X}_{j+1}(s) - \hat{X}_j(s_j^*) - U_j(s) + U_j(s_j^*) \right) \wedge 0 \\
& = \hat{X}_j(t),
\end{split}
\]
where we use \eqref{eq:coinatj} and the induction assumption again in the second line. This completes the proof of the claim, and statement \ref{lem:Xbds_plessq-i} of the lemma.

Part \ref{lem:Xbds_plessq-ii} follows by substituting the bounds $X_{i+M}(s_{i+M-1}) \geq I_M^*(i,T)$ and $L_{(j-1,j)}(s_{j-1}) - L_{(j-1,j)}(s_j) \geq 0$ into \eqref{eq:variX_plessq}, and recalling from part (i) that since $K^*(i,T) \geq M$, the inequality in \eqref{eq:variX_plessq} is in fact an equality. We obtain 
\[
\begin{split}
X_i(T) & \geq I_M^*(i,T) + \inf_{0 \leq s_{i+M - 1} \leq \cdots \leq s_i \leq T} \sum_{j = i}^{i+M-1} (V_j(s_{j-1}) - V_j(s_j)) \\
& \geq I_M^*(i,T) + \inf_{0 \leq s \leq T}V_{i+M-1}(s) + \cV^{-}_M(i,T).
\end{split}
\]
Note that in the first line, we take the infimum starting from $s_{i+M-1}$ unlike in \eqref{eq:VMdef} where we start from $s_{i+M-2}$ (and $s_{i+M-1} = 0$). This accounts for the extra term $\inf_{0 \leq s \leq T}V_{i+M-1}(s)$ in the last line.

To prove \ref{lem:Xbds_plessq-iii}, by Lemma \ref{lem:bd_by_fin}(ii), we have
\begin{equation} \label{eq:finbd1}
X_j(s) \leq X_j^M(s) \text{ for } 1 \leq j \leq M, s \in [0,\infty).
\end{equation}
Taking $j = M$ and $s = T$, \eqref{eq:finbd1}, Lemma \ref{lem:bd_by_fin}(iii) and Lemma \ref{lem:pkdubs}\ref{pkdubs-ii} give us (iii).

To establish \ref{lem:Xbds_plessq-iv}, we
apply Lemma \ref{lem:bd_by_fin}\ref{it:bf3} to \eqref{eq:finbd1} to obtain 
$$
X_j(s) \leq x_M + \tilde{X}_j^M(s), s \in [0,\infty).
$$
Taking the infimum of both sides over $s \in [0,T]$ and then applying Lemma \ref{lem:pkdubs}\ref{pkdubs-iii} to bound the right-hand side, we  obtain the result.
\end{proof}

\section{Bounds from Brownian last-passage percolation} \label{ssec:lpp}

{The estimates of Section \ref{ssec:ptwise} reduce the analysis of collision chains to
probability bounds for the last-passage type quantities $\cV^-_M$, $\cV^+_M$
and $\cW_M$ introduced, respectively, in \eqref{eq:VMdef}, \eqref{eq:VMplusdef} and \eqref{eq:WMdef}. The goal of this section is to collect the required bounds.
The estimates for $\cV^-_M$ and $\cV^+_M$ are concentration
bounds around their typical scale $2\sqrt{MT}$, and are obtained by using
the connection between Brownian last-passage percolation and the largest
eigenvalue of a GUE random matrix. The tail probability estimate for $\cW_M$, which is needed
only in the case $p<q$, proceeds via concentration inequalities for extrema of Gaussian processes around the mean and upper bounding the associated expectation by considering Gaussian quantities indexed by a tractable subset of up-right grid paths.}

\begin{lemma} \label{lem:lpp}
\begin{enumerate}[label = (\roman*)]
\item \label{it:lpp1} For every $\alpha_0,T \in (0,\infty)$, there exists $C = C(\alpha_0,T) \in (0,\infty)$ such that for all $M \in \mathbb{N}$, $i\in \NN$, and $\alpha \in [\alpha_0,\infty)$,
\begin{equation} \label{eq:vmlpp}
\mathbb{P}\left( \left| \frac{\cV^-_M(i,T)}{\sqrt{MT}} + 2 \right| \geq \alpha \right) \leq C \exp(-M (\alpha^{3/2} \wedge \alpha^3)/C),
\end{equation}
and
\begin{equation} \label{eq:vplpp}
\mathbb{P}\left( \left| \frac{\cV^+_M(i,T)}{\sqrt{MT}} - 2 \right| \geq \alpha \right) \leq C \exp(-M (\alpha^{3/2} \wedge \alpha^3)/C).
\end{equation}

\item \label{it:lpp2} 
Assume $p < q$. There exist $\delta_0 \in (0,\infty)$ and for each $T>0$, $M_0(T) \in \mathbb{N}$ such that, for all $i \in \mathbb{N}$, $t \in (0,T]$ and $M \geq M_0(T)$,
\begin{equation} \label{eq:modbmlpp}
\mathbb{P}\left( \frac{\cW_M(i,t)}{\sqrt{Mt}}  \geq -\delta_0 \right) \leq \delta_0^{-1} e^{-\delta_0 M}.
\end{equation}
\end{enumerate}
\end{lemma}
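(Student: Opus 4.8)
The plan is to handle the two parts by reducing everything to (a) the identification of $\cV_M^+$ with the largest eigenvalue of a GUE-type matrix (Remark \ref{rem:maxeig}) together with the classical large/moderate deviation bounds for that eigenvalue, and (b), for $\cW_M$, to the estimate in part \ref{it:lpp1}. For part \ref{it:lpp1}, I would first remove the drift. Writing $V_j = B_j + g_j(\cdot)$ and noting that along any admissible time sequence the increments $t_j - t_{j-1}$ (resp.\ $s_{j-1}-s_j$) are nonnegative and sum to $T$, the drift contributes a quantity of absolute value at most $|\bar{g}|_\infty T$; hence $\cV_M^{\pm}(i,T)$ differs from its centered ($\bar{g}=\bar 0$) counterpart by at most $|\bar{g}|_\infty T$ a.s., uniformly in $i,M$. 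For the centered quantities, Remark \ref{rem:maxeig} and Brownian scaling give $\cV_M^{+}(i,T) \stackrel{d}{=} \sqrt{T}\,\lambda_{\max}(G_M)$ with $G_M$ an $M\times M$ GUE matrix normalized so that $\lambda_{\max}/\sqrt{M}\to 2$; applying the reflection $B_j\mapsto -B_j$ together with the time reversal $s\mapsto T-s$ identifies $\cV_M^{-}(i,T)$ in law with $-\sqrt{T}\,\lambda_{\max}(G_M)$, and in particular the laws do not depend on $i$. The upper tail of $\lambda_{\max}(G_M)/\sqrt{M}$ at level $2+\alpha$ is bounded by $C\exp(-cM\alpha^{3/2})$ for bounded $\alpha$ and by $C\exp(-cM\alpha^{2})$ for large $\alpha$, while the lower tail at $2-\alpha$ is even smaller, $\le C\exp(-cM^2\alpha^{3})$ (these are standard; Ledoux--Rider, Aubrun); both are dominated by $C\exp(-cM(\alpha^{3/2}\wedge\alpha^3))$, giving \eqref{eq:vmlpp}--\eqref{eq:vplpp} in the centered case. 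The drift correction shifts the relevant level by at most $|\bar{g}|_\infty\sqrt{T/M}\le|\bar{g}|_\infty\sqrt{T}$, which for $M$ large is $\le\alpha/2$ once $\alpha\ge\alpha_0$, so the bound survives with $\alpha$ replaced by $\alpha/2$ at the cost of enlarging $C$; the finitely many remaining pairs $(M,\alpha)$ with $M$ small are absorbed into $C=C(\alpha_0,T,|\bar{g}|_\infty)$, using a crude Borell--TIS Gaussian-concentration bound for $\cV_M^{\pm}$ (whose supremum-variance is exactly $T$) when needed.

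For part \ref{it:lpp2} the idea is to use the infimum (we may choose the descending time sequence) and the contraction factor $r = p/q<1$ to compare $\cW_M$ with the Brownian last-passage quantity $\cV^-$ controlled in part \ref{it:lpp1}. From the recursion $W_k = rW_{k-1}+V_k$ one obtains, for \emph{every} admissible $(s_k)$, the exact identity
\[
\sum_{k=i}^{i+M-1}\bigl(W_k(s_{k-1})-W_k(s_k)\bigr) \;=\; \sum_{k=i}^{i+M-1}\bigl(V_k(s_{k-1})-V_k(s_k)\bigr) \;+\; \mathcal E(s),
\]
where $\mathcal E(s) = \sum_{k=i}^{i+M-1}\sum_{j=1}^{k-1} r^{k-j}\bigl(V_j(s_{k-1})-V_j(s_k)\bigr)$ collects the lower-index contributions, each with a geometric weight $r^{k-j}$. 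Evaluating the left side at a minimizer $s^{\ast}$ of the first sum on the right (which exists by compactness and whose value is $\cV_M^{-}(i,T)$) and using that $\cW_M(i,T)$ is the infimum of the left side over all $(s_k)$, we get $\cW_M(i,T)\le \cV_M^{-}(i,T)+\mathcal E(s^{\ast})$. By part \ref{it:lpp1}, for a suitable fixed small $\alpha_0$ one has $\cV_M^{-}(i,T)\le(-2+\alpha_0)\sqrt{MT}$ with probability at least $1-Ce^{-cM}$; so the whole matter reduces to showing that $\mathcal E(s^{\ast})\le (1-\alpha_0)\sqrt{MT}$ (say) with probability at least $1-\delta_0^{-1}e^{-\delta_0 M}$, which then yields $\cW_M(i,T)\le-\sqrt{MT}$. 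Here I would apply Abel summation in the inner sum of $\mathcal E$ and use the geometric decay to rewrite, for each $j$, the $j$-th block of $\mathcal E(s)$ as $r$ times a weighted difference of $V_j$ across consecutive staircase levels, whose weights essentially cancel, so that only increments of $V_j$ over short windows around $s_j^\ast$ survive; these are then controlled by Cauchy--Schwarz (using $\sum_k(s^\ast_{k-1}-s^\ast_k)=T$) and sub-Gaussian bounds for increments of the independent processes $V_j$, with a union bound over a fine discretization of the staircase to deal with the dependence of $s^{\ast}$ on the $V_j$'s (the geometric weights making the union bound harmless). A rescaling $t\mapsto t/T$ reduces the case of general $t\in(0,T]$ to $t=T$, and $M_0(T)$ is exactly the threshold that makes the leading $-2\sqrt{MT}$ term from part \ref{it:lpp1} dominate the $O(\sqrt{T})$-scale error terms.

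The step I expect to be the main obstacle is this last one: although $\mathcal E(s)$ is small for most individual staircases, the infimum is attained at the \emph{data-dependent} $s^{\ast}$, and a naive uniform bound over all staircases is far too lossy — it produces an error of order $M\sqrt{T}$ rather than $\sqrt{MT}$, coming from $\sum_j\sup_{[0,T]}|V_j|$. Extracting the cancellation in $\mathcal E(s^{\ast})$, i.e.\ ruling out a conspiracy between the last-passage optimizer and the low-index Brownian motions, is the delicate point and is where both the geometric factor $r<1$ and the sharp edge behaviour of $\cV_M^{-}$ supplied by part \ref{it:lpp1} are essential.
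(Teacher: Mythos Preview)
Your treatment of part \ref{it:lpp1} is correct and matches the paper's proof: reduce to zero drift (the drift contributes a deterministic error $\le |\bar g|_\infty T$), identify $\cV_M^+(i,T,\bar 0)$ with $\sqrt{T}\,\lambda_{\max}$ of an $M\times M$ GUE via Remark~\ref{rem:maxeig}, and invoke standard upper/lower tail bounds for the top eigenvalue.

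For part \ref{it:lpp2} your approach is genuinely different from the paper's, and the gap you flag is real and not closed by the sketch you give. The decomposition $\cW_M(i,T)\le \cV_M^-(i,T)+\mathcal E(s^\ast)$ is valid, but the ``cancellation'' you hope to extract by Abel summation does not materialize: for each fixed $j\ge i$ the $V_j$-contribution to $\mathcal E(s)$ is
\[
\sum_{k>j} r^{k-j}\bigl(V_j(s_{k-1})-V_j(s_k)\bigr)
= rV_j(s_j) - (1-r)\sum_{\ell\ge 1} r^{\ell} V_j(s_{j+\ell}),
\]
which involves \emph{values} of $V_j$ at the staircase points, not increments over short windows; nothing forces $s_{j}^\ast - s_{j+1}^\ast$ to be small. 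A direct Borell--TIS bound on $\sup_s \mathcal E(s)$ does not help either: the pointwise variance of $\mathcal E(s)$ is only $O(T)$, but the entropy of the staircase simplex is of order $M$, so chaining gives $\mathbb E\sup_s\mathcal E(s)$ of order $\sqrt{MT}$ with an uncontrolled constant, which need not be smaller than the gain $2\sqrt{MT}$ from $\cV_M^-$. In short, the comparison with $\cV_M^-$ buys nothing, because the error term is itself a last-passage-type functional of the same size.

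The paper sidesteps this entirely. It never compares $\cW_M$ with $\cV_M^-$; instead it applies Borell--TIS directly to the Gaussian field $\pi\mapsto L(\pi)=\sum_k(W_k(s_{k-1})-W_k(s_k))$, whose variance at every $\pi$ is at most $M/(1-r^2)$ when $T=M$, reducing the problem to a \emph{mean} estimate $\mathbb E[\inf_\pi L(\pi)]\le -\eta M$. That estimate is obtained by an explicit (random) test partition: set $s_{M-j}^\ast\in\{j,j+1\}$ according to which of the two unit increments $W_{M-j+1}(j+1)-W_{M-j+1}(j)$ or $W_{M-j}(j+1)-W_{M-j}(j)$ is smaller, so that $L(\pi^\ast)=W_M(1)+\sum_{j=1}^{M-1}\min(G_{1,j},G_{2,j})$ with $(G_{1,j},G_{2,j})$ a nondegenerate bivariate Gaussian; each minimum has strictly negative mean bounded away from zero uniformly in $j,M$ (the correlation is bounded away from $1$ because $r<1$), giving $\mathbb E L(\pi^\ast)\le -\eta(M-1)$. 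This two-step argument (concentration $+$ explicit test path) is both shorter and avoids the data-dependence problem altogether.
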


\begin{proof} In the following argument, $\mathcal{V}_M^+(i,T,\bar{g})$ denotes the quantity defined by \eqref{eq:VMplusdef} with drift parameters given by $\bar{g} = \{g_j\}_{j \in \mathbb{N}}$, and $\cV_M^-(i,T, \bar g)$ is defined similarly. To obtain (i), it suffices to prove \eqref{eq:vplpp}, noting that $\cV_M^+(i,T) = \cV_M^+(i,T, \bar g)$ is equal in distribution to $-\cV_M^-(i,T, -\bar g) $. Moreover, we may assume without loss of generality that $\bar{g} = \bar{0}$, where $\bar{0}$ is the vector of zeroes. Indeed,  setting $\bar{g} = \bar{0}$ in \eqref{eq:VMplusdef} amounts to the same thing as replacing each $V_j$ in the expression with $B_j$. Consequently, 
\begin{equation} \label{eq:vgv0comp}
\begin{split}
\cV_M^+(i,T,\bar{g}) & = \sup_{0 \leq t_i \leq \cdots \leq t_{i + M - 2} \leq T} \sum_{j = i}^{i+M-1} \left\{g_j(t_j - t_{j-1}) + (B_j(t_{j}) - B_j(t_{j-1})) \right\} \\
& \leq \sup_{0 \leq t_i \leq \cdots \leq t_{i + M - 2} \leq T} \sum_{j = i}^{i+M-1} \left\{|\bar{g}|_\infty(t_j - t_{j-1}) + (B_j(t_{j}) - B_j(t_{j-1})) \right\} \\
& = |\bar{g}|_\infty T + \cV_M^+(i,T,\bar{0}).
\end{split}
\end{equation}
A similar estimate in which we replace each $g_j$ with $-|\bar{g}|_\infty$ yields 
\[
\cV_M^+(i,T,\bar{g}) \geq -|\bar{g}|_\infty T  + \cV_M^+(i,T,\bar{0}).
\]
Combining the two estimates we see that 
\[
|\cV_M^+(i,T,\bar{g}) - \cV_M^+(i,T,\bar{0})| \leq |\bar{g}|_\infty T.
\]
Since the above quantity normalized by $\sqrt{Mt}$ goes to $0$ as $M\to \infty$, we see  that replacing $\cV_M^{\pm}(i,T) = \cV_M^{\pm}(i,T,\bar{g})$ by $\cV_M^{\pm}(i,T,\bar{0})$ on the left sides of \eqref{eq:vmlpp} and \eqref{eq:vplpp}
  has no effect on the bounds, other than potentially changing the constant $C$.

We now recall that $\cV^{+}_M(i,T,\bar{0})$ is equal in distribution to $\lambda_{\text{max}}(H_2)$, the largest eigenvalue of an $M \times M$ GUE matrix with complex $N(0,T)$ entries (see Remark \ref{rem:maxeig}). 
Consequently, we may apply large and small deviation bounds for $\lambda_{\text{max}}(H_2)$ given in \cite[Theorem 1 and Equation (1.4)]{smalldev2010} to obtain the following bound: there exists a universal constant $C \in (0,\infty)$ such that, for all $\alpha \in (0,\infty)$,
\begin{equation} \label{eq:VMplusbd}
\PP\left( \left| \frac{\cV_M^+(i,T,\bar{0})}{\sqrt{MT}} - 2 \right| \geq \alpha \right) \leq C \exp(-M(\alpha^{3/2} \wedge \alpha^3)/C).
\end{equation}
Indeed, for $\alpha \in (0,2]$, the exponent $\alpha^{3}$  arises from the left tail bound in 
\cite[Theorem 1]{smalldev2010} and the exponent $\alpha^{3/2}$ from the right tail bound in the same theorem. The case $\alpha>2$ follows on appealing to the right tail bound from Equation (1.4) in that paper (note that the left tail bound for $\alpha>2$ holds trivially in our case since $\cV_M^+(i,T,\bar{0})$ is nonnegative.)
This gives us the desired bound for $\cV_+(i,T,\bar{0})$, and part (i) is proved (the lower bound $\alpha_0$ on $\alpha$ and dependence of $C$ on $\alpha_0$ is required to translate the bound from $\cV_M^+(i,T,\bar{0})$ to $\cV_M^+(i,T,\bar{g})$).

To prove (ii), we may once again reduce the argument to the zero drift case. For $t \in (0,T]$, letting $\cW_M(i,t,\bar{g})$ denote the quantity defined by \eqref{eq:WMdef} with drift parameters $\bar{g} = \{g_j\}$, by a similar calculation to \eqref{eq:vgv0comp}, we obtain
\[
\begin{split}
\cW_M(i,t,\bar{g}) & \leq \inf_{0 \leq s_{i+M-2} \leq \cdots \leq s_i \leq t} \sum_{k = i}^{i+M-1} \sum_{j = 1}^k r^{k - j}\left\{|\bar{g}|_{\infty}(s_{k-1} - s_k) + (B_j(s_{j-1}) - B_j(s_j))\right\} \\
& \leq \frac{1}{1-r}|\bar{g}|_\infty T + \cW_M(i,t,\bar{0}),
\end{split}
\]
where the factor of $1/(1-r)$ in the second bound is obtained by summing the geometric series. We obtain a lower bound on $\cW_M(i,t,\bar{g})$ by replacing $|\bar{g}|_\infty$ with $-|\bar{g}|_\infty$ in estimate above, and we conclude that 
\[
|\cW_M(i,t,\bar{g}) - \cW_M(i,t,\bar{0})| \leq \frac{1}{1-r}|g|_{\infty}T.
\]
Since the above quantity normalized by $\sqrt{Mt}$ goes to $0$ as $M\to \infty$, we see  that replacing $\cW_M(i,t) = \cW_M(i,t,\bar{g})$ with $\cW_M(i,t,\bar{0})$ on the left side of \eqref{eq:modbmlpp}
  has no effect on the bounds, other than potentially changing  $\delta_0$ and $M_0(T)$.
  Thus, for the rest of the argument we may take $g_j = 0$ (and $V_j = B_j$) for all $j \in \mathbb{N}$.

For $M \in \mathbb{N}$, define the quantity 
\[
\mathcal{W}_M^* = \mathcal{W}_M(1, M) = \inf_{0 \leq s_{M-1} \leq \cdots \leq s_1 \leq M} \sum_{k = 1}^M (W_k(s_{k-1}) - W_k(s_k)),
\]
where $s_M = 0$ and $s_0 = M$. In view of Brownian scaling, it suffices to prove that there exist constants $c, \delta \in (0,\infty)$ such that for all $M \in \mathbb{N}$,
\begin{equation}
\mathbb{P}(\mathcal{W}_M^* \geq -\delta M) \leq 2 e^{-cM}.
\end{equation}
To this end, let $\Pi := \{(s
_M, s_{M-1},\dots,s_1, s_0) : 0 = s_M \leq s_{M-1} \leq \cdots \leq s_1 \leq s_0 = M\}$ be the topological space of partitions of $[0,M]$ with $M-1$ points. For each $\pi \in \Pi$, define the Gaussian random variable 
$$
L(\pi) = \sum_{i = 1}^M (W_i(s_{i-1}) - W_i(s_i)).
$$
Note that $\mathbb{E}[L(\pi)] = 0$, and 
\[
\begin{split}
\Var(L(\pi)) & = \Var\left( \sum_{i = 1}^M \sum_{j = 1}^i r^{i - j}(B_j(s_{i-1}) - B_j(s_i)) \right) \\
& = \sum_{i = 1}^M \sum_{j = 1}^i r^{2(i - j)}(s_{i-1} - s_i) \leq \frac{M}{1 - r^2}.
\end{split}
\]
Hence, by the Borell-TIS Inequality \cite[Theorem 2.1.1]{AdlerTaylor}, for $u \in (0,\infty)$,
\[
\mathbb{P}\left( \left| \inf_{\pi \in \Pi} L(\pi) - \mathbb{E}\left[ \inf_{\pi \in \Pi} L(\pi) \right] \right| > uM \right) \leq 2 \exp( -\frac{u^2(1 - r^2)M}{2} ).
\]
Thus, it suffices to show there exists $M_0 > 0, \eta > 0$ such that for all $M \geq M_0$,
\begin{equation} \label{eq:expbd}
\mathbb{E}\left[ \inf_{\pi \in \Pi} L(\pi) \right] \leq -\eta M.
\end{equation}

Consider the random partition 
$$\pi^* = \{0 = s_M^* \leq s^*_{M-1} \leq \cdots \leq s^*_1 \leq s_0^* = M\},$$
where for $j= 1, \ldots, M-1$, with $A_j = \{W_{M - j + 1}(j+1) - W_{M - j + 1}(j) \le W_{M - j}(j+1) - 
W_{M - j}(j)\}$,
$$s^*_{M-j} = (j+1) 1_{A_j} + j 1_{A_j^c}.$$ 
Then
\begin{equation} \label{eq:sumrep}
L(\pi^*) = W_M(1) + \sum_{j = 1}^{M - 1} \Psi_j,
\end{equation}
where
\[
\begin{split}
& \Psi_j = G_{1,j} \wedge G_{2,j}, \\
& G_{1,j} = W_{M - j + 1}(j+1) - W_{M - j + 1}(j), \\ 
& G_{2,j} = W_{M - j}(j+1) - W_{M - j}(j).
\end{split}
\]

We compute 
\[
\begin{split}
\Var(G_{1,j}) & = \Var\left( \sum_{ \ell = 1}^{M - j + 1} r^{M - j + 1 - \ell} (B_{\ell}(j+1) - B_{\ell}(j)) \right) \\
& = \sum_{ \ell = 1}^{M - j + 1} r^{2(M - j + 1 - \ell)} = \frac{1 - r^{2(M-j+1)}}{1 - r^2},
\end{split}
\]
and similarly
\[
\Var(G_{2,j}) = \frac{1 - r^{2(M-j)}}{1 - r^2}.
\]
And 
\[
\begin{split}
\text{Cov}(G_{1,j},G_{2,j}) & = \text{Cov}\left( \sum_{ \ell = 1}^{M - j + 1} r^{M - j + 1 - \ell} (B_{\ell}(j+1) - B_{\ell}(j)), \sum_{ \ell = 1}^{M - j} r^{M - j - \ell} (B_{\ell}(j+1) - B_{\ell}(j)) \right) \\
& = \sum_{\ell = 1}^{M - j} r \cdot r^{2(M - j - \ell)} = \frac{r(1 - r^{2(M-j)})}{1 - r^2}.
\end{split}
\]
Therefore, we can write 
\[
\begin{split}
& G_{1,j} = a_{1,j}U_{1,j} + b_j V, \\
& G_{2,j} = a_{2,j}U_{2,j} + b_j V,
\end{split}
\]
where $U_{1,j}, U_{2,j},$ and $V$ are i.i.d.\ $N(0,1)$, and 
\[
\begin{split}
& a_{1,j}^2 = \frac{1 - r^{2(M-j+1)} - r(1 - r^{2(M-j)})}{1 - r^2}, \\
& a_{2,j}^2 = \frac{1 - r^{2(M-j)} - r(1 - r^{2(M-j)})}{1 - r^2}, \\
& b^2_j = \frac{r(1 - r^{2(M-j)})}{1 - r^2}.
\end{split}
\]
Hence, for all $1 \leq j \leq M - 1$,
\[
\begin{split}
\mathbb{E}[\Psi_j] & = \mathbb{E}[G_{1,j} \wedge G_{2,j}] = \mathbb{E}\left[\frac{1}{2}(G_{1,j} + G_{2,j} - |G_{1,j} - G_{2,j}|)\right] \\
& = -\frac{1}{2}\mathbb{E}|G_{1,j} - G_{2,j}| = -\frac{1}{\sqrt{2\pi}}\sqrt{a_{1,j}^2 + a_{2,j}^2} \leq -\frac{1}{\sqrt{2\pi}} \sqrt{\frac{1-r}{1-r^2}}.
\end{split}
\]
Hence, by \eqref{eq:sumrep}, 
\[
\mathbb{E}\left[ \inf_{\pi \in \Pi} L(\pi) \right] \leq \mathbb{E}L(\pi^*)= \sum_{j = 1}^{M-1} \mathbb{E}[\Psi_j] \leq -\frac{1}{\sqrt{2\pi}} \sqrt{\frac{1-r}{1-r^2}}(M-1).
\]
This proves \eqref{eq:expbd} and completes the proof of part \ref{it:lpp2} of the lemma.
\end{proof}

\section{Proof of Theorems \ref{thm:unique1}, \ref{thm:unique2}, and \ref{thm:unique3}}\label{sec:pfthm2.6}

We will now prove our main results on pathwise uniqueness of solutions to \eqref{eq:loctimeeq0}. Throughout the section we assume that $\bar{g} \in \ell^\infty(\mathbb{N})$ and that $p \in [0,1)$.
{We combine the deterministic estimates from Section \ref{ssec:ptwise} with the
probability bounds from Section \ref{ssec:lpp} to prove the pathwise uniqueness
theorems, by exploiting the connection with chains of collisions described in Section \ref{sec:finprop}.}

\subsection{Proof of Theorems \ref{thm:unique1} and \ref{thm:unique2}}

For $0 \leq u < v < \infty$ and $i,M \in \mathbb{N}$, define the quantities
\begin{equation} \label{eq:shiftedlpp}
\cV_M^-(i,[u,v]) = \inf_{u \leq s_{i + M - 2} \leq \cdots \leq s_i \leq v} \sum_{j = i}^{i+M-1} (V_j(s_{j-1}) - V_j(s_j)),
\end{equation}
\begin{equation} \label{eq:shiftedlpp+}
\cV_M^+(i,[u,v]) = \sup_{u \leq t_i \leq \cdots \leq t_{i+M-2} \leq v} \sum_{j = i}^{i+M-1} (V_j(t_{j}) - V_j(t_{j-1})),
\end{equation}
and
\begin{equation} \label{eq:shiftedaltlpp}
\cW_M(i,[u,v]) = \inf_{u \leq s_{i + M - 2} \leq \cdots \leq s_i \leq v} \sum_{j = i}^{i+M-1} (W_j(s_{j-1}) - W_j(s_j)),
\end{equation}
where $s_{i + M - 1} = t_{i-1} := u$ and $s_{i-1} = t_{i + M - 1} := v$. Note 
\[
\begin{split}
\cV_M^{\pm}(i,[u,v]) = \theta_u \cV_M^{\pm}(i,v - u), \quad\quad \cW_M(i,[u,v]) = \theta_u \cW_M(i,v - u),
\end{split}
\]
where $\theta_u$ is the translation map defined by \eqref{eq:translate}. In fact, by translation invariance of Brownian motion, $\cV_M^{\pm}(i,[u,v])$ is equal in distribution to $\cV_M^{\pm}(i,v-u)$, $\cW_M(i,[u,v])$ is equal in distribution to $\cW_M(i,v-u)$, and they are both measurable with respect to the $\sigma$-field 
\begin{equation} \label{eq:futurefield}
\mathcal{F}_{[u,\infty)} := \sigma(B_j(s) - B_j(u) : u \leq s < \infty, j \in \mathbb{N}).
\end{equation} 
Let 
\begin{equation} \label{eq:Rstar_def}
R^*_M(i,T) = \sup_{0 \leq s \leq T} \left\{W_{i+M}(T) - W_{i+M}(s) - \sum_{j = 1}^{{i+M}-1} r^{{i+M} - j}\cV^-_{i+M - j + 1}(j,[s,T]) \right\}.
\end{equation}

\begin{lemma}\label{lem:rstar}
Fix 
 $T \in (0,\infty)$ and let $p < q$. Then there exists $C(T) \in (0,\infty)$, such that for all $\alpha > 0$, and $i,M \in \mathbb{N}$,
\begin{equation} \label{eq:Rstarprobbd}
\mathbb{P}\left( R_M^*(i,T) \geq \alpha \right) \leq C(T)e^{-\alpha}. 
\end{equation}
\end{lemma}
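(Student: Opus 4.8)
The plan is to bound $R_M^*(i,T)$ by decomposing it into two tractable pieces: a running supremum of a single process $W_{i+M}(T)-W_{i+M}(s)$, and the contribution of the nested last-passage infima $\cV^-_{i+M-j+1}(j,[s,T])$. First I would observe that each $W_k$ is a (drift-perturbed) Gaussian process whose variance grows linearly in $k$ but with a uniform bound: writing $W_k = \sum_{j=1}^k r^{k-j}V_j$ with $r=p/q<1$ (using $p<q$ here), the quadratic variation of the martingale part of $W_k$ over $[0,T]$ is $\sum_{j=1}^k r^{2(k-j)}T \le \frac{T}{1-r^2}$, \emph{uniformly in $k$}. This is the crucial structural fact: although $W_{i+M}$ involves $i+M$ Brownian motions, it behaves like a single diffusion with bounded variance. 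Hence $\sup_{0\le s\le T}\bigl(W_{i+M}(T)-W_{i+M}(s)\bigr)$ has Gaussian-type tails with a constant depending only on $T$ (and $r$, $|\bar g|_\infty$), uniformly in $i,M$ — this follows from the reflection principle / Borell-TIS together with Condition \ref{cn:c1a} to absorb the drift.

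Next I would handle the subtracted sum. Each term $r^{i+M-j}\cV^-_{i+M-j+1}(j,[s,T])$ is nonnegative in expectation-shifted sense: by Lemma \ref{lem:lpp}\ref{it:lpp1}, $\cV^-_m(j,[s,T]) \approx -2\sqrt{m(T-s)}$, so $-\cV^-_m$ is typically of order $+2\sqrt{mT}$, meaning each subtracted term typically \emph{helps} make $R_M^*$ small. So the only danger is the event that some $\cV^-_{i+M-j+1}(j,[s,T])$ is anomalously large (positive). The geometric weights $r^{i+M-j}$ are decaying as $j$ decreases from $i+M$, so the dominant indices are $j$ close to $i+M$, i.e. small $m = i+M-j+1$. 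I would bound
\[
R_M^*(i,T) \le \sup_{0\le s\le T}\bigl(W_{i+M}(T)-W_{i+M}(s)\bigr) + \sum_{j=1}^{i+M-1} r^{i+M-j}\Bigl(\sup_{0\le s\le T}\bigl(-\cV^-_{i+M-j+1}(j,[s,T])\bigr)\Bigr)^{-},
\]
but more carefully: since I want an upper tail bound on $R_M^*$, I only need to control the positive part of each $-r^{i+M-j}\cV^-_{\cdot}(j,[s,T])$, i.e.\ the event that $\cV^-_m(j,[s,T]) > 0$ for some $s$. By Lemma \ref{lem:lpp}\ref{it:lpp1} (applied with, say, $\alpha = 2$, or with a covering of $[0,T]$ by finitely many dyadic subintervals to pass from fixed $[s,T]$ to the supremum over $s$), $\PP(\sup_{s}\cV^-_m(j,[s,T]) \ge t)$ decays like $C\exp(-m(t/\sqrt{mT})^{3/2}/C)$ for large $t$, which is summable against the weights $r^{i+M-j} = r^{m-1}$.

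Finally I would assemble the estimate: write $R_M^*(i,T) \le A + \sum_{m=2}^{i+M} r^{m-1} B_m$, where $A = \sup_{0\le s\le T}(W_{i+M}(T)-W_{i+M}(s))$ has a Gaussian tail and each $B_m = \bigl(\sup_{0\le s\le T}\cV^-_m(\cdot,[s,T])\bigr)^+$ has a stretched-exponential tail with rate $\gtrsim m$, uniformly in the base index $j$ and in $i$. A union-bound / Chernoff argument then gives $\PP(R_M^*(i,T)\ge\alpha) \le \PP(A \ge \alpha/2) + \sum_{m\ge 2}\PP\bigl(r^{m-1}B_m \ge \alpha 2^{-m}\bigr)$ (or a similar allocation of the deficit $\alpha$ across scales), and since all these tails are at least exponentially decaying in $\alpha$ with constants uniform in $i,M$, one obtains $\PP(R_M^*(i,T)\ge\alpha) \le C(T)e^{-\alpha}$ as claimed. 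The reduction to zero drift can be done exactly as in the proof of Lemma \ref{lem:lpp}, absorbing $|\bar g|_\infty T/(1-r)$ into the constant.

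The main obstacle I anticipate is \textbf{passing from the fixed-interval estimate $\PP(\cV^-_m(j,[s,T])\ge t)$ to a bound on $\sup_{s\in[0,T]}\cV^-_m(j,[s,T])$} (and likewise for $W_{i+M}$), uniformly in $m$ and $j$ — a naive union bound over $s$ fails, so one needs either a chaining/metric-entropy argument for the Gaussian field $(s,\{s_\ell\})\mapsto \sum_j(V_j(s_{j-1})-V_j(s_j))$ or a monotonicity observation (the infimum defining $\cV^-_m$ over a longer interval $[s,T]$ is $\le$ that over $[s',T]$ for $s\le s'$, which could let one restrict to $s=0$ up to controlling the increment, or to a finite dyadic grid with a correction term that is itself a Gaussian increment with bounded variance). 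Getting the constants in all these steps to be uniform in $i$ and $M$ — which is the whole point of the lemma — is where the care is needed; the geometric weights $r^{m-1}$ are what ultimately make the $m$-sum (equivalently the $j$-sum) converge with an $i,M$-independent bound.
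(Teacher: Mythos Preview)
Your overall strategy is sound in spirit, but the paper takes a considerably more elementary route that completely sidesteps the $\sup_s$ obstacle you flag. Rather than invoking the LPP tail bounds of Lemma~\ref{lem:lpp}, the paper uses the crude estimate
\[
\inf_{0\le s\le T}\cV^-_{i+M-j+1}(j,[s,T]) \;\ge\; -(i+M-j+1)\,|\bar g|_\infty T \;+\; \sum_{k=j}^{i+M}\inf_{0\le u\le v\le T}\bigl(B_k(v)-B_k(u)\bigr),
\]
obtained simply by bounding each increment $V_k(s_{k-1})-V_k(s_k)$ below by the worst-case increment of $B_k$ over $[0,T]$; this holds uniformly in $s$, so no chaining, dyadic covering, or monotonicity argument is needed. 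Similarly $\sup_s(W_{i+M}(T)-W_{i+M}(s))$ is bounded by $\frac{|\bar g|_\infty T}{1-r}+\sum_j r^{i+M-j}\sup_s(B_j(T)-B_j(s))$. After reorganizing the resulting double sum via the geometric weights, one arrives at
\[
R_M^*(i,T) \;\le\; c_3\Bigl(|\bar g|_\infty T + \sum_{j=1}^{i+M} r^{i+M-j}\bigl(\sup_{[0,T]}B_j - \inf_{[0,T]}B_j\bigr)\Bigr),
\]
with $c_3$ depending only on $r$. The proof finishes by the exponential Markov inequality $\PP(R_M^*\ge\alpha)\le e^{-\alpha}\mathbb{E}[e^{R_M^*}]$, using independence of the $B_j$ together with Jensen's inequality (the map $u\mapsto u^{r^\ell}$ is concave for $r^\ell\le 1$) to get $\mathbb{E}[e^{R_M^*}]\le C(T)$ uniformly in $i,M$.

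Your approach could in principle be pushed through, but note a sign slip in your reasoning: since $\cV^-_m \approx -2\sqrt{mT}$, the terms $-r^{i+M-j}\cV^-_{\cdot}$ in the definition of $R_M^*$ are typically \emph{positive} and hence make $R_M^*$ \emph{larger}, not smaller; you must control their upper tails, not merely the event $\{\cV^-_m>0\}$. The salvageable point is that their typical total contribution $\sum_m r^{m-1}\cdot 2\sqrt{mT}$ is bounded by a constant in $r,T$, after which one controls deviations above this. The paper's argument buys a two-line reduction to i.i.d.\ Brownian ranges with no random-matrix input at all; your route, if completed, would presumably yield sharper (stretched-exponential or sub-Gaussian) tails, but that refinement is not needed for the lemma as stated.
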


\begin{proof}
Note that
\begin{equation} \label{eq:Rstar_bd1}
R_{M}^*(i,T) \leq \sup_{0 \leq s \leq T} \left( W_{i+M}(T) - W_{i+M}(s) \right) - \sum_{j = 1}^{{i+M} - 1} r^{{i+M} - j} \inf_{0 \leq s \leq T} \cV^-_{i+M - j + 1}(j,[s,T]).
\end{equation}
Furthermore, in view of \eqref{eq:altBMdef},
\begin{equation} \label{eq:modBM_bd1}
\begin{split}
\sup_{0 \leq s \leq T} \left( W_{i+M}(T) - W_{i+M}(s) \right) & \leq \sum_{j = 1}^{i+M} r^{{i+M} - j}\sup_{0 \leq s \leq T} (V_j(T) - V_j(s)) \\
& \leq \frac{|\bar{g}|_\infty T}{1 - r} + \sum_{j = 1}^{i+M} r^{{i+M} - j}\sup_{0 \leq s \leq T} (B_j(T) - B_j(s)).
\end{split}
\end{equation}
And, in view of \eqref{eq:shiftedlpp},
\begin{equation} \label{eq:VM_bd1}
\begin{split}
\inf_{0 \leq s \leq T} \cV^-_{i+M - j + 1}(j,[s,T]) & \geq \sum_{k = j}^{{i+M}} \inf_{0 \leq s \leq t \leq T} \left( V_k(t) - V_k(s) \right) \\
& \geq - (i + M - j+1)|\bar{g}|_\infty T + \sum_{k = j}^{{i+M}} \inf_{0 \leq s \leq t \leq T} \left( B_k(t) - B_k(s) \right).
\end{split}
\end{equation}
From \eqref{eq:Rstar_bd1}, \eqref{eq:modBM_bd1}, and \eqref{eq:VM_bd1}, we obtain
\begin{equation} \label{eq:Rstar_bd2}
\begin{split}
R_{M}^*(i,T) & \leq \frac{|\bar{g}|_\infty T}{1 - r} + \sum_{j = 1}^{i+M} r^{{i+M} - j}\sup_{0 \leq s \leq T} (B_j(T) - B_j(s)) \\
& \hspace{0.5in} + \sum_{j = 1}^{{i+M}-1} r^{{i+M} - j} (i + M - j+1)|\bar{g}|_\infty T \\
& \hspace{1in} - \sum_{j = 1}^{{i+M}-1} r^{{i+M} - j} \sum_{k = j}^{{i+M}} \inf_{0 \leq s \leq t \leq T} \left( B_k(t) - B_k(s) \right) \\
& \leq c_1 |\bar{g}|_\infty T + \sum_{j = 1}^{i+M} r^{{i+M} - j}\sup_{0 \leq s \leq T} (B_j(T) - B_j(s)) \\
& \hspace{0.5in} - c_2 \sum_{k = 1}^{{i+M}} r^{{i+M} - k} \inf_{0 \leq s \leq t \leq T} \left( B_k(t) - B_k(s) \right) \\
& \leq c_3 \left( |\bar{g}|_\infty T + \sum_{j = 1}^{i+M} r^{{i+M} - j}\Big(\sup_{0 \leq s \leq T} B_j(s) - \inf_{0 \leq s \leq T} B_j(s) \Big) \right),
\end{split}
\end{equation}
where $c_1, c_2, c_3 \in (0,\infty)$ are constants that depend only on $r$. For $j \in \mathbb{N}$ and a standard Brownian motion $B$, let
$$
C_0(T) := \mathbb{E} \exp( c_3\Big( \sup_{0 \leq s \leq T} B(s) - \inf_{0 \leq s \leq T} B(s)\Big) ).
$$
Note that $C_0(T)<\infty$. Hence, using \eqref{eq:Rstar_bd2} and independence of the $B_j$'s, for $\alpha > 0$, 
\[
\begin{split}
& \mathbb{P}(R_M^*(i,T) \geq \alpha) = \mathbb{P}\left( e^{R_M^*(i,T)} \geq e^\alpha \right) \leq e^{-\alpha}\mathbb{E}\left[ e^{R_M^*(i,T)} \right] \\
& \hspace{0.6in} \leq e^{-\alpha} \exp(c_3|\bar{g}|_\infty T) \prod_{j = 1}^{i+M} \mathbb{E} \exp( c_3 r^{{i+M} - j} \Big(\sup_{0 \leq s \leq T} B_j(s) - \inf_{0 \leq s \leq T} B_j(s) \Big) ) \\
& \hspace{0.6in} \leq e^{-\alpha} \exp(c_3|\bar{g}|_\infty T) \prod_{j = 1}^{i+M} C_0(T)^{r^{{i+M} - j}} \\
& \hspace{0.6in} = e^{-\alpha} \exp(c_3|\bar{g}|_\infty T) C_0(T)^{ \sum_{j = 1}^{i+M} r^{{i+M} - j}} \\
& \hspace{0.6in} \leq C(T)e^{-\alpha},
\end{split}
\]
as desired. Here the third line follows by Jensen's inequality, noting that $u \mapsto u^{r^{{i+M} - j}}$ is a concave function for $1 \leq j \leq i+M$.
\end{proof}

We now consider a solution to \eqref{eq:loctimeeq}.
 Fix parameters $p \in [0,1)$, $\bar{g}$ as before and let a filtered probability space, Brownian motions $\{B_j\}$ and sequence $\bar x = \{x_j\}$ be as at the beginning of Section \ref{sec:sepu}.
Let $\{X_j(t), j \in \mathbb{N}, t \in [0,\infty)\}$ be a solution to \eqref{eq:loctimeeq}, with $N = \infty$, with driving noises $\{B_j\}$ and initial condition $\bar x$. Associated local times are denoted as $\{L_{(j,j+1)}\}$.
Recall that for $T, M>0$ and $i \in \NN$, $I^*_M(i,T)$ is defined in \eqref{eq:defIstar}.
\begin{lemma} \label{lem:biginf2}
Assume $p < q$. For $i \in \mathbb{N}$ and $T \in (0,\infty)$, 
$$
\liminf_{M \to \infty} \frac{I^*_M(i,T)}{\sqrt{M}} = \infty \text{ a.s.}
$$
Furthermore, $\liminf_{M \to \infty} x_M/\sqrt{M} = \infty$ a.s.
\end{lemma}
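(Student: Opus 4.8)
The plan is to prove the two assertions in the reverse of the order stated: first the ``consequently'' claim $\liminf_M x_M/\sqrt{M}=\infty$, then upgrade it to the statement about $I^*_M$.

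\textbf{Step 1 (the initial data).} Apply Lemma~\ref{lem:Xbds_plessq}\ref{lem:Xbds_plessq-iv} with $i=1$. From \eqref{eq:Jdef} one checks directly that $\mathcal{J}(1,T)=0$ (the sum there is empty and $\cV_1^-(1,s)=V_1(s)=W_1(s)$), so the lemma gives, for all $M\in\NN$ and $T\in(0,\infty)$,
\[
\inf_{0\le s\le T}X_1(s)\ \le\ x_M+\alpha_M^{-1}\,\cW_M(1,T).
\]
Since $X_1$ has continuous paths, $m_1(T):=\inf_{0\le s\le T}X_1(s)>-\infty$. Now invoke Lemma~\ref{lem:lpp}\ref{it:lpp2} --- the only place $p<q$ is used --- and Borel--Cantelli: a.s.\ there is a finite $M_1(T)$ with $\cW_M(1,T)\le-\delta_0\sqrt{MT}$ for all $M\ge M_1(T)$. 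Using $\alpha_M^{-1}=(1-r)/(1-r^M)\ge 1-r$ (with $r=p/q$) and rearranging, $x_M\ge m_1(T)+(1-r)\delta_0\sqrt{MT}$ for $M\ge M_1(T)$, so $\liminf_M x_M/\sqrt{M}\ge(1-r)\delta_0\sqrt{T}$ a.s. Intersecting these events over $T\in\NN$ (note $\delta_0$ and $r$ do not depend on $T$) gives $\liminf_M x_M/\sqrt{M}=\infty$ a.s.

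\textbf{Step 2 (from $x_M$ to $I^*_M$).} Since $I^*_M(i,T)=I^*_{M+i-1}(1,T)$ it suffices to take $i=1$, and since $T\mapsto I^*_M(1,T)$ is nonincreasing it is enough to show that for each fixed $T$, $\liminf_M I^*_M(1,T)/\sqrt{M}\ge c(p)\sqrt{T}$ a.s.\ for some $c(p)>0$, and then let $T\to\infty$. On the event $\{K^*(1+M,T)\le N\}$, Lemma~\ref{lem:finK-unique} identifies $X_{1+M}$ on $[0,T]$ with the $(1+M)$-th coordinate $X^{1+M+N}_{1+M}$ of the finite $(1+M+N)$-particle system. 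For that finite system, comparing its top block of $N+1$ particles with a genuinely packed system started at $x_{1+M}$ (using that the push from the particle below only raises things, via the monotonicity underlying Lemma~\ref{lem:bd_by_fin}) and then applying Lemma~\ref{lem:lppcontrol} yields
\[
X_{1+M}(s)\ \ge\ x_{1+M}+\cV^-_{N+1}(1+M,T)\qquad\text{for all }s\le T,
\]
on $\{K^*(1+M,T)\le N\}$. Taking $N=M$, combining with $\cV^-_{M+1}(1+M,T)\ge-C\sqrt{MT}$ (Lemma~\ref{lem:lpp}\ref{it:lpp1} and Borel--Cantelli, uniformly in the starting index) and with Step~1 ($x_{1+M}/\sqrt{M}\to\infty$) gives $I^*_M(1,T)\ge\tfrac12 x_{1+M}\gg\sqrt{M}$ on $\{K^*(1+M,T)\le M\}$ for all large $M$. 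It then remains to know that $\{K^*(1+M,T)\le M\}$ holds for all large $M$, a.s.; heuristically this should follow because, by Step~1, the particles near rank $M$ have spread out over distances $\gg\sqrt{M}\gg\sqrt{T}$, whereas no particle can travel more than $O(\sqrt{T})$ on $[0,T]$ once the local-time contributions are controlled, so a collision chain above rank $1+M$ cannot have length as large as $M$.

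\textbf{Main obstacle.} The genuinely hard input is the last sentence: bounding the length of collision chains above a high rank, equivalently showing that the influence of the far-away high-ranked particles on particle $1+M$ over $[0,T]$ propagates only $o(M)$ steps. This is precisely the ``finiteness of $K^*$'' phenomenon that is the technical heart of Section~\ref{sec:pfthm2.6}, where the Brownian last-passage-percolation estimates of Section~\ref{ssec:lpp} and the pointwise bounds of Section~\ref{ssec:ptwise} do the real work --- in particular the trajectory upper bounds needed to control the local-time terms, so that a particle's displacement on $[0,T]$ really is $O(\sqrt{T})$. Everything else above is bookkeeping with the comparison lemmas and Borel--Cantelli; indeed, if a bound of the form $K^*(n,T)=O(n)$ (or even $K^*(n,T)\le n$ for large $n$) has already been established in this section, then Step~2 becomes routine.
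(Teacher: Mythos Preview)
Your Step 1 is correct and is essentially the paper's own argument specialized to the case where one looks forward from time $0$: applying Lemma~\ref{lem:Xbds_plessq}\ref{lem:Xbds_plessq-iv} with $i=1$ makes $\mathcal{J}(1,T)$ vanish, and then Lemma~\ref{lem:lpp}\ref{it:lpp2} plus Borel--Cantelli gives $x_M\ge m_1(T)+(1-r)\delta_0\sqrt{MT}$ eventually. Since $\delta_0$ does not depend on $T$, letting $T\to\infty$ along integers yields the ``consequently'' claim. This is a clean self-contained proof of that part.

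The genuine gap is in Step 2. Your argument there rests on the event $\{K^*(1+M,T)\le M\}$ holding for all large $M$, but finiteness (let alone a quantitative bound) of $K^*$ is precisely what is \emph{deduced from} Lemma~\ref{lem:biginf2} in the paper, not an input to it: look at the proof of Theorem~\ref{thm:unique}\ref{it:unique1}--\ref{it:unique2}, where \eqref{eq:Istarlb2} invokes the conclusion of Lemma~\ref{lem:biginf2} in order to show $K^*(i,[t,t+\Delta])<\infty$. So your Step~2 is circular as written. You acknowledge this in your ``Main obstacle'' paragraph, but that means the proposal does not actually prove the main assertion of the lemma.

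The paper avoids this circularity entirely. Instead of reducing to a $K^*$ bound, it bootstraps in time: set $\rho^*=0\vee\liminf_M I^*_M(i,T)/\sqrt{M}$, assume $\rho^*<\infty$, and apply Lemma~\ref{lem:Xbds_plessq}\ref{lem:Xbds_plessq-iv} on a \emph{future} window $[T,T'\wedge K]$ with $T'$ chosen large in terms of $\rho^*$. The key new ingredient is the control $X_{i+M}(T)-I^*_M(i,T)\le R^*_M(i,T)$ coming from Lemma~\ref{lem:Xbds_plessq}\ref{lem:Xbds_plessq-iii}, together with the tail bound of Lemma~\ref{lem:rstar} showing $R^*_M(i,T)/\sqrt{M}\to 0$. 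This replaces your appeal to $K^*$: it lets the paper relate $X_{i+M}(T)$ (the ``initial value'' for the future window) back to $I^*_M(i,T)$ directly, without ever identifying the infinite system with a finite one. Combined with the $\cW$-estimate on $[T,T'\wedge K]$, finiteness of $\inf_{T\le s\le T'\wedge K}X_i(s)$ then forces $\rho^*=\infty$. If you want to repair your Step~2, the missing idea is exactly this $R^*_M$ device.
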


\begin{remark} \normalfont
Note that we have made no \textit{a priori} assumption on the initial data $\{x_i\}_{i \in \mathbb{N}}$ in this section, other than their being drawn from a distribution $\gamma \in \clp_0(\mathbb{R}^\infty)$. Thus the second statement of Lemma \ref{lem:biginf2} gives us a necessary condition on the initial data for a solution to \eqref{eq:loctimeeq0} to exist when $p < q$ and $\bar{g} \in \ell^\infty(\mathbb{N})$.
\end{remark}

\begin{proof}
The second statement follows from the first upon observing that, by definition, $I^*_M(1,T) \leq X_{M+1}(0) = x_{M + 1}$ for all $T > 0$ and $M \in \NN$. To prove the first statement, fix $i, T$ and $p$ as in the statement of the lemma. By Lemma \ref{lem:Xbds_plessq}\ref{lem:Xbds_plessq-iv} and translation invariance (Lemma \ref{lem:translate}), for all $0 \leq u \leq v < \infty$,
\begin{equation} \label{eq:xiub2}
\inf_{u \leq s \leq v} X_i(v) \leq X_{i+M}(u) + \alpha_{i + M}^{-1} \cW_{i+M}(1,[u,v]) + \sqrt{v - u} \mathcal{J}(i,[u,v]),
\end{equation}
where  
$$
\mathcal{J}(i,[u,v]) := \frac{1}{\sqrt{v - u}} \sup_{u \leq s \leq v} \left(-\cV^-_{i}(1,[u,s]) + W_i(s) - W_i(u) - \sum_{j = 1}^{i-1} r^{i - j} \cV^-_{i - j + 1}(j,[u,s]) \right).
$$

Let $\tau^* \in [0,T]$ be the first time at which the infimum in the definition \eqref{eq:defIstar} of $I^*_M(i,T)$ is attained.  By Lemmas \ref{lem:translate} and  \ref{lem:Xbds_plessq}\ref{lem:Xbds_plessq-iii},
\begin{equation} \label{eq:diff2bd}
X_{i+M}(T) - X_{i + M}(\tau^*) = \sup_{0 \leq s \leq T} X_{i+M}(T) - X_{i + M}(s) \leq R^*_M(i,T),
\end{equation}
where $R^*_M(i,T)$ is defined as in \eqref{eq:Rstar_def}.

For $K \in (T, \infty)$, choose $\delta_0$ and $M_1(K) = M_0(K-T)$ as in Lemma \ref{lem:lpp}(ii) with $T$ there replaced by $K-T$. Define 
$$
\rho^* = 0 \vee \liminf_{M \to \infty} \frac{I^*_M(i,T)}{\sqrt{M}},
$$
$$
T' = [\delta_0^{-1}(1-r)^{-1}(\rho^* + 2)]^2 + T,
$$ 
Let $F$ be the event that $\rho^* < \infty$ (which is equivalent to $T' < \infty$). We want to show that $\mathbb{P}(F) = 0$. 
By Lemma \ref{lem:translate}, Lemma \ref{lem:Xbds_plessq}\ref{lem:Xbds_plessq-iv}, and \eqref{eq:diff2bd}, 
\begin{equation} \label{eq:infXibd3}
\begin{split}
& \inf_{T \leq s \leq T' \wedge K} X_i(s) = ( \inf_{T \leq s \leq T' \wedge K} X_i(s) - X_{i+M}(T)) + (X_{i + M}(T) - X_{i + M}(\tau^*)) + I_M^*(i,T) \\
& \quad\quad \leq (1-r)\cW_{i+M}(1,[T,T' \wedge K]) + Q(i,T,T',K) + R^*_M(i,T) + I_M^*(i,T),
\end{split}
\end{equation}
where 
$$
Q(i,T,T',K) := \sqrt{T' \wedge K - T} \mathcal{J}(i,[T,T' \wedge K]).
$$
Here  we appeal to the fact that  $\rho^*$ and $T'$ are independent of the $\sigma$-field $\mathcal{F}_{[T,\infty)}$, and we can apply Lemma \ref{lem:translate} and Lemma \ref{lem:Xbds_plessq}\ref{lem:Xbds_plessq-iv} after a suitable conditioning.

Since the left-hand side of \eqref{eq:infXibd3} and the quantity $Q(i,T,T',K)$ are almost surely finite and do not depend on $M$, we see that 
\begin{equation} \label{eq:AKiszero2}
\mathbb{P}(A_K) = 0,
\end{equation}
where 
$$
A_K := \left\{\liminf_{M \to \infty } ( (1-r)\cW_{M+i}(1,[T,T' \wedge K]) + R^*_M(i,T) + I_M^*(i,T) ) = -\infty \right\}.
$$
Next, consider that 
\begin{equation} \label{eq:contains3_2}
\begin{split}
A_K \cap F & \supset \left\{ \liminf_{M \to \infty} \frac{(1-r)\cW_{M+i}(1,[T,T' \wedge K]) + R^*_M(i,T) + I_M^*(i,T)}{\sqrt{M+i}} < 0 \right\} \cap F \\
& \supset \left\{ \rho^* + \limsup_{M \to \infty} \frac{(1-r)\cW_{M+i}(1,[T,T' \wedge K])}{\sqrt{M + i}} + \limsup_{M \to \infty} \frac{R^*_M(i,T)}{\sqrt{M + i}} < 0 \right\} \cap F \\
& \supset B_K \cap C_K \cap D_K,
\end{split}
\end{equation}
where 
\[
\begin{split}
& B_K := \left\{ \rho^* - \delta_0(1-r)\sqrt{T' \wedge K - T} + 1 < 0 \right\} \cap F, \\
& C_K := \left\{ \limsup_{M \to \infty} \frac{\cW_{i+M}(1,[T,T' \wedge K])}{\sqrt{(M+i)(T' \wedge K - T)}} \leq -\delta_0 \right\}, \\
& D_K := \left\{ \limsup_{M \to \infty} \frac{R^*_M(i,T)}{\sqrt{M+i}} \leq 1 \right\}.
\end{split}
\]
Observe that 
\begin{equation} \label{eq:BKlim2}
\mathbb{P}(B_K) = \mathbb{P}\left( \left\{ \rho^* - \delta_0(1-r)\sqrt{T' \wedge K - T} + 1 < 0 \right\} \cap F\right) \to \mathbb{P}(F) \text{ as } K \to \infty.
\end{equation}
Furthermore, for $M^* \in \mathbb{N}$,  with $M^* > M_1(K)$,
\begin{equation} \label{eq:Ckcbd2}
\begin{split}
\mathbb{P}(C_K^c) & \leq \mathbb{P}\left(\exists M \geq M^*, \frac{\cW_{i+M}(1,[T,T' \wedge K])}{\sqrt{(M+i)(T' \wedge K - T)}} > -\delta_0\right) \\
& \leq \sum_{M = M^*}^\infty \mathbb{P}\left( \frac{\cW_{i + M}(1,[T,T' \wedge K])}{\sqrt{(M+i)(T' \wedge K - T)}} > -\delta_0 \right) \\
& \leq \sum_{M = M^*}^\infty \delta_0^{-1} e^{-\delta_0 M},
\end{split}
\end{equation}
where the last line uses Lemma \ref{lem:lpp}\ref{it:lpp2}. Note that although $T'$ is a random variable, we are allowed to apply the lemma since $T'$ is independent of the $\sigma$-field $\mathcal{F}_{[T,\infty)}$, as noted above. Also, \begin{equation} \label{eq:Dkcbd2}
\begin{split}
\mathbb{P}(D_K^c) & \leq \mathbb{P}\left(\exists M \geq M^*, \frac{R^*_M(i,T)}{\sqrt{M+i}} > 1 \right) \\
& \leq \sum_{M = M^*}^\infty \mathbb{P}\left(  R^*_M(i,T) > \sqrt{M+i} \right) \leq \sum_{M = M^*}^\infty C(T) e^{-\sqrt{M+i}},
\end{split}
\end{equation}
where the last inequality uses Lemma \ref{lem:rstar}.
Since the bounds \eqref{eq:Ckcbd2} and \eqref{eq:Dkcbd2} converge to zero as $M^* \to \infty$, this shows that $\mathbb{P}(C_K) = \mathbb{P}(D_K) = 1$. Hence, in view of \eqref{eq:AKiszero2}, \eqref{eq:contains3_2}, and \eqref{eq:BKlim2}, 
$$
0 = \limsup_{K \to \infty} \mathbb{P}(A_K \cap F) \geq \limsup_{K \to \infty} \mathbb{P}(B_K) = \mathbb{P}(F),
$$
and the result follows.
\end{proof}

\begin{lemma} \label{lem:biginf4}
Assume that the condition \eqref{eq:cncstar} holds for the solution $\{X_j(t), j \in \mathbb{N}, t \in [0,\infty)\}$ with local times $\{L_{(j,j+1)}\}$, introduced above Lemma \ref{lem:biginf2}. Assume that the initial conditions $\{x_j\}$ satisfy \eqref{eq:init_cond1}. Then for any $T \in (0,\infty)$, 
\begin{equation}
\liminf_{M \to \infty} \frac{I^*_M(i,T)}{\sqrt{M}} \geq \liminf_{M \to \infty}\frac{x_M}{\sqrt{M}} \text{ a.s.}
\end{equation}
\end{lemma}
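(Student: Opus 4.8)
The plan is to estimate $I^*_M(i,T)=\inf_{0\le s\le T}X_{i+M}(s)$ from below directly from the defining equation \eqref{eq:loctimeeq}, isolating the local-time contribution so that Condition \ref{cn:cstar} applies to it. First I would write \eqref{eq:loctimeeq} at index $j=i+M$ and use $X_{i+M}(0)=x_{i+M}$ to obtain
\[
X_{i+M}(s)=x_{i+M}+V_{i+M}(s)-\bigl(qL_{(i+M,i+M+1)}(s)-pL_{(i+M-1,i+M)}(s)\bigr),
\]
hence, taking the infimum over $s\in[0,T]$,
\[
I^*_M(i,T)\ge x_{i+M}+\inf_{0\le s\le T}V_{i+M}(s)-\sup_{0\le s\le T}\bigl(qL_{(i+M,i+M+1)}(s)-pL_{(i+M-1,i+M)}(s)\bigr).
\]
Condition \ref{cn:cstar} (recall $X_M(0)=x_M$) says that, almost surely, for every $\varepsilon>0$ there is a random $M_\varepsilon$ such that $\sup_{0\le s\le T}\bigl(qL_{(M,M+1)}(s)-pL_{(M-1,M)}(s)\bigr)\le\varepsilon(1\vee x_M)$ for all $M\ge M_\varepsilon$; applying this with $M$ replaced by $i+M$ and dividing by $\sqrt M$ gives, for all sufficiently large $M$,
\[
\frac{I^*_M(i,T)}{\sqrt M}\ge\frac{x_{i+M}}{\sqrt M}-\varepsilon\,\frac{1\vee x_{i+M}}{\sqrt M}+\frac{\inf_{0\le s\le T}V_{i+M}(s)}{\sqrt M}.
\]

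Next I would control the Brownian term by a Borel--Cantelli argument. Since $|g_j|\le|\bar g|_\infty<\infty$ by Condition \ref{cn:c1a}, we have $\inf_{0\le s\le T}V_{i+M}(s)\ge-|\bar g|_\infty T+\inf_{0\le s\le T}B_{i+M}(s)$, and by the reflection principle $\PP\bigl(\inf_{0\le s\le T}B_{i+M}(s)<-\varepsilon\sqrt M\bigr)=2\,\PP\bigl(B_{i+M}(T)<-\varepsilon\sqrt M\bigr)\le e^{-\varepsilon^2M/(2T)}$, which is summable in $M$. Borel--Cantelli applied along $\varepsilon=1/n$, followed by intersecting the resulting full-measure events, yields $\liminf_{M\to\infty}\inf_{0\le s\le T}V_{i+M}(s)/\sqrt M\ge0$ almost surely.

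To conclude I would invoke hypothesis \eqref{eq:init_cond1}: it forces $x_M\to\infty$, so eventually $1\vee x_{i+M}=x_{i+M}$; moreover $1/\sqrt M\ge1/\sqrt{i+M}$ and $x_{i+M}\ge0$ for large $M$, whence $\liminf_{M\to\infty}x_{i+M}/\sqrt M\ge\liminf_{M\to\infty}x_M/\sqrt M=:L$. Taking $\liminf_{M\to\infty}$ in the last displayed inequality, and using superadditivity of $\liminf$ together with the bound from the previous paragraph, gives $\liminf_{M\to\infty}I^*_M(i,T)/\sqrt M\ge(1-\varepsilon)L$ on a full-measure event; letting $\varepsilon\downarrow0$ through a countable sequence finishes the proof, the case $L=+\infty$ being absorbed automatically.

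I do not expect a substantial obstacle here: the argument is a direct estimate from \eqref{eq:loctimeeq} together with a standard Gaussian tail bound. The points needing care are purely bookkeeping ones --- tracking the normalization by $1\vee x_M$ in Condition \ref{cn:cstar} and the index shift $M\mapsto i+M$, handling the possibility that $L=\liminf_M x_M/\sqrt M$ is infinite, and checking that the $\liminf$ manipulations (extracting a term tending to $0$, superadditivity) are legitimate because no quantity involved equals $-\infty$.
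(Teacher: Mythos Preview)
Your proof is correct and follows essentially the same approach as the paper: both start from the defining equation \eqref{eq:loctimeeq} at index $i+M$, isolate the local-time contribution so that Condition \ref{cn:cstar} controls it, dispatch the Brownian term $\inf_{s\in[0,T]}V_{i+M}(s)/\sqrt{M}$ via Gaussian tail bounds, and invoke \eqref{eq:init_cond1} for the initial-condition term. The only cosmetic difference is that the paper first factors out $x_{i+M}\vee 1$ and analyzes the resulting bracketed expression, whereas you keep the terms additively separated and pass to $(1-\varepsilon)L$ before letting $\varepsilon\downarrow 0$; the substance is identical.
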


\begin{proof}
From \eqref{eq:loctimeeq}, we may obtain for all $i,M \in \mathbb{N}$ and $T \in [0,\infty)$,
\begin{equation} \label{eq:Istarlb}
\begin{split}
\frac{I^*_M(i,T)}{\sqrt{M}} & \geq \frac{x_{i+M} \vee 1}{\sqrt{M}} \Bigg\{\frac{x_{i+M}}{x_{i+M} \vee 1} + \frac{\inf_{s \in [0,T]}V_{i+M}(s)}{x_{i+M} \vee 1} \\
& \hspace{1in} - \sup_{s \in [0,T]} \left( \frac{q L_{(i+M,i+M+1)}(s) - p L_{(i+M-1,i+M)}(s)}{x_{i+M} \vee 1} \right) \Bigg\}.
\end{split}
\end{equation}
The first term inside the braces converges to 1 as $M \to \infty$ by the assumption \eqref{eq:init_cond1} on the initial conditions, and the second converges to zero almost surely by standard Gaussian tail bounds. The $\liminf$, as $M \to \infty$, of the third term in the braces is non-negative by \eqref{eq:cncstar}. Thus the liminf of the overall quantity in the braces can be bounded below by $1$. We thus obtain the result by taking the limit infimum as $M \to \infty$ of both sides of \eqref{eq:Istarlb}.
\end{proof}

\begin{proof}[Proof of Theorems \ref{thm:unique1} and \ref{thm:unique2}] Consider a strong solution $\{X_j\}$ of \eqref{eq:loctimeeq}, with $N = \infty$, with driving noises $\{B_j\}$ and initial distribution $\gamma$. We denote $\{X_j(0)\} = \{x_j\} = \bar x$ and assume that $\bar{g} \in \ell^\infty(\mathbb{N})$. Associated local times are denoted as $\{L_{(j,j+1)}\}$. Assume the hypothesis of either Theorem \ref{thm:unique1} or of Theorem \ref{thm:unique2} holds.

We claim that under the hypothesis of Theorem \ref{thm:unique1} or Theorem \ref{thm:unique2}, we may choose some
$\clf_0$-measurable random variable $\Delta \in (0,1)$
such that 
\begin{equation} \label{eq:Istarlb2}
\textit{for any $T \in [0,\infty)$}, \quad \liminf_{M \to \infty} \frac{I^*_M(i,T)}{\sqrt{M}} > (2 + \Delta)\sqrt{\Delta} \text{ a.s.}
\end{equation}
In the first case this follows by Lemma \ref{lem:biginf2}, while in the second, this follows by Lemma \ref{lem:biginf4} and the assumption on the initial conditions in \eqref{eq:init_cond1}. By Lemma \ref{lem:unique}, 
we will be done if we can show that, for all $i \in \mathbb{N}$ and $t \in (0,\infty)$, $K^*(i,[t, t + \Delta], \{X_j\}) < \infty$ a.s. 

To this end, fix $t \in (0,\infty)$, and let $\hat{X}_j = \theta_t X_j$, $j \in \mathbb{N}$. Recall by Lemma \ref{lem:translate} that $\{\hat{X}_j\}$ solves \eqref{eq:loctimeeq} with Brownian motions $\hat{B}_j := \hat{\theta}_t B_j$, initial conditions $\hat{x}_j := X_j(t)$, and the same parameters $p,\bar{g}$ as before. For $T \in (0,\infty)$, let $\hat{I}_M^*(i,T)$ and $\hat{\mathcal{V}}_M^-(i,T)$ be defined as in \eqref{eq:defIstar} and \eqref{eq:VMdef}, but with $\hat{x}_j, \hat{X}_j, \hat{B}_j$ replacing $x_j,X_j,B_j$, respectively. We also denote $\hat{K}^*(i,\Delta) = K^*(i,[t,t+\Delta], \{X_j\})$. 

By translation invariance of Brownian motion, $\hat{\mathcal{V}}_M^-(i,T)$ is equal in distribution to $\mathcal{V}_M^-(i,T)$. Moreover, the condition \eqref{eq:Istarlb2} 
clearly holds if  $I_M^*(i,T)$ is replaced by $\hat{I}_M^*(i,T)$. Since $T \mapsto \hat{I}^*_M(i,T)$ is non-increasing, and $\Delta < 1$, we have 
\begin{equation} \label{eq:Istarlb3}
\liminf_{M \to \infty} \frac{\hat{I}^*_M(i,\Delta)}{\sqrt{M}} \geq \liminf_{M \to \infty} \frac{\hat{I}^*_M(i,1)}{\sqrt{M}} > (2 + \Delta)\sqrt{\Delta} \text{ a.s.}
\end{equation}

Lemma \ref{lem:Xbds_plessq}\ref{lem:Xbds_plessq-ii} implies that if $\hat{K}^*(i,\Delta) = \infty$, then for all $M \in \mathbb{N}$, $\hat{X}_i(\Delta) \geq \hat{I}_M^*(i,\Delta) + \inf_{s \in [t,t+\Delta]} V_{i+M-1}(s) + \hat{\cV}^-_M(i,\Delta)$. Since $\hat{X}_i(\Delta)$ is finite and does not depend on $M$, 
\begin{equation} \label{eq:0geG}
\begin{split}
0 & = \mathbb{P}\left( \left\{ \limsup_{M \to \infty} (\hat{I}^*_M(i,\Delta) + \inf_{s \in [t,t+\Delta]} V_{i+M-1}(s) + \hat{\cV}^-_M(i,\Delta)) = \infty \right\} \cap \{\hat{K}^*(i,\Delta) = \infty\} \right) \\
& \geq \mathbb{P}\left(\left\{ \limsup_{M \to \infty} \frac{\hat{I}^*_M(i,\Delta) + \inf_{s \in [t,t+\Delta]} V_{i+M-1}(s) + \hat{\cV}^-_M(i,\Delta)}{\sqrt{M}} > 0 \right\} \cap \{\hat{K}^*(i,\Delta) = \infty\} \right) \\
& \geq \mathbb{P}\left( \left\{ \limsup_{M \to \infty} \frac{\hat{\cV}^-_M(i,\Delta)}{\sqrt{M}} > -(2 + \Delta) \sqrt{\Delta} \right\} \cap \{\hat{K}^*(i,\Delta) = \infty\} \right),
\end{split}
\end{equation}
where the last line follows by \eqref{eq:Istarlb3}, and the fact that  $\inf_{s \in [t,t+\Delta]} V_{i+M-1}(s)/\sqrt{M} \to 0 \text{ a.s.}$ as $M \to \infty$, which follows by straightforward argument using Gaussian tail bounds.
Denote the conditional distribution given $\clf_0$ as $\PP_{\clf_0}$. Recall that $\clf_0$ is independent of the Brownian motions $\{B_j\}$, and observe that for all $M_0 \in \mathbb{N}$, a.s.,
\[
\begin{split}
\mathbb{P}_{\clf_0}\left( \limsup_{M \to \infty} \frac{\hat{\cV}^-_M(i,\Delta)}{\sqrt{M}} > -(2 + \Delta) \sqrt{\Delta} \right) &\geq 1 - \mathbb{P}_{\clf_0}\left(\exists M \geq M_0, \frac{\hat{\cV}^-_M(i,\Delta)}{\sqrt{M\Delta}} \leq -2 - \Delta \right) \\
& \geq 1 - \sum_{M = M_0}^\infty \mathbb{P}_{\clf_0}\left( \frac{\hat{\cV}^-_M(i,\Delta)}{\sqrt{M\Delta}} \leq -2 - \Delta \right) \\
& \geq 1 - \sum_{M = M_0}^\infty C \exp(-M (\Delta^{3/2} \wedge \Delta^3)/C),
\end{split}
\]
applying Lemma \ref{lem:lpp}\ref{it:lpp1} in the last line with the constant $C= C(\Delta,\Delta)$ as in the statement of the lemma. Since the quantity above converges to $1$ as $M_0 \to \infty$, this shows that $\mathbb{P}(\hat{K}^*(i,\Delta) = \infty) = 0$, as desired. 
\end{proof}

\subsection{Proof of Theorem \ref{thm:unique3}}

Throughout this subsection, we assume $p > q$. We let 
$$
\sigma = r^{-1} = \frac{q}{p} \in (0,1).
$$
Consider now a solution $\{X_j(t), j \in \mathbb{N}, t \in [0,\infty)\}$ of \eqref{eq:loctimeeq} with driving noises $\{B_j\}$, initial condition $\bar x$ and associated local times  $\{L_{(j,j+1)}\}$.

Fix $i,M \in \mathbb{N}$ and $T \in (0,\infty)$, and define a sequence of stopping times $\tau_{i+M} \leq \tau_{i+M-1} \leq \cdots \leq \tau_i$, as follows: $\tau_{i+M} := 0$, and for $i \leq k \leq i+M-1$,  
\begin{equation} \label{eq:nextcol}
\tau_k := \inf\{t \geq \tau_{k+1} : X_{k+1}(t) = X_k(t)\} \wedge T.
\end{equation}
Also, let $\tau_{i-1} := T$. 

Recall the definitions \eqref{eq:shiftedlpp}-\eqref{eq:shiftedlpp+} of $\cV_M^-(i,[u,v]), \cV_M^+(i,[u,v])$, respectively.

Let $D_{M,j}(i,T)$ denote the set of all doubly indexed sequences 
$$
\mathbf{t} = \{t_\ell^{(k)} : i \leq k \leq i+M, k \leq \ell \leq k + j\} \in \mathbb{R}^{(j+1)(M+1)},
$$
satisfying
$$
t_\ell^{(k)} \leq t_{\ell'}^{(k')} \text{ whenever $k \geq k'$, or $k = k'$ and $\ell \leq \ell'$.}, \quad t^{(i+M)}_{i+M} = 0, \quad t^{(i)}_{i+j} = T.
$$
namely
\begin{align*}
0 &= t_{i+M}^{(i+M)} \le t_{i+M+1}^{(i+M)} \le \cdots \le t_{i+M+j}^{(i+M)}
\le t_{i+M-1}^{(i+M-1)}\\
&\le \cdots \le t_{i+M-1+j}^{(i+M-1)} \le t_{i+M-2}^{(i+M-2)} \le \cdots \le t^{(i)}_i \le \cdots \le t_{i+j-1}^{(i)} \le t_{i+j}^{(i)} = T.
\end{align*}

\begin{lemma} \label{lem:localtimebds}
Assume the condition \eqref{eq:cncstar2} holds. 
\begin{enumerate}[label = (\roman*)]
\item \label{it:lbds1} Almost surely, for all $k \in \mathbb{N}$ and $t \in [0,\infty)$,
\begin{equation} \label{eq:ltk_bd2}
L_{(k,k+1)}(t) \leq q^{-1}\sum_{j = 1}^\infty \sigma^j\left\{\cV^+_{j+k}(1,t) - V_{j+k}(t)\right\}.
\end{equation}
\item \label{it:lbds2} Let $i \leq k \leq i + M$. Almost surely, for $t \in [\tau_k,\tau_{k-1}]$,
\begin{equation} \label{eq:bdaftertau}
L_{(k,k+1)}(t) - L_{(k,k+1)}(\tau_k) \leq q^{-1}\sum_{j = 1}^\infty \sigma^j\left\{\cV^+_{j+1}(k,[\tau_k,t]) - V_{j+k}(t) + V_{j+k}(\tau_k)\right\}.
\end{equation}
\item \label{it:lbds3} Almost surely, the following bound holds:
\begin{equation}
\begin{split}
\sum_{k = i}^{i+M} & (L_{(k,k+1)}(\tau_{k-1}) - L_{(k,k+1)}(\tau_k)) \\
& \leq q^{-1}\sum_{j = 1}^\infty \sigma^j \left\{|\bar g|_\infty T + \cU(j,M,T) - \cV_{M+1}^-(j+i,T) \right\},
\end{split}
\end{equation}
where
\begin{equation} \label{eq:udef}
\cU(j,M,T) := \sup_{\{t_\ell^{(k)}\} \in D_{M,j}(i,T)} \sum_{k = i}^{i+M} \ \sum_{\ell = k}^{j+k} \ B_{\ell}(t_\ell^{(k)}) - B_{\ell}(t_{\ell - 1}^{(k)}).
\end{equation}
\end{enumerate}
\end{lemma}

\begin{proof}
(i) By equation \eqref{eq:loctimeeq}, for all $k,M \in \mathbb{N}$, with $k \leq M$, and $t \in [0,\infty)$, 
\begin{equation} \label{eq:collapse}
\begin{split}
\sum_{j = 1}^M \sigma^j \{X_{j+k}(t) - x_{j+k}\} & = \sum_{j = 1}^M \sigma^j V_{j+k}(t) + \sum_{j = 1}^M \{\sigma^{j-1} q L_{(j+k-1,j+k)}(t) - \sigma^j q L_{(j+k,j+k+1)}(t)\} \\
& = \sum_{j = 1}^M \sigma^j V_{j+k}(t) + qL_{(k,k+1)}(t) - \sigma^M q L_{(M+k,M+k+1)}(t).
\end{split}
\end{equation}
Here in the first line we make the substitution $p = \sigma^{-1}q$ to obtain a telescoping sum, which we then evaluate in the second line. Letting $M \to \infty$, almost surely, the sum in the last line converges absolutely and uniformly on compact time intervals (follows by standard Gaussian tail bounds applied to the quantities $\sup_{s \in [0,T]}|V_{j+k}(s)|$), and the last term $\sigma^M q L_{(M+k,M+k+1)}(t)$ converges uniformly on compact sets to zero by \eqref{eq:cncstar2} and monotonicity. Thus, the following equality holds, almost surely, for all $t \in [0,\infty)$,
\begin{equation} \label{eq:locrep}
q L_{(k,k+1)}(t) = \sum_{j = 1}^\infty \sigma^j \{X_{j+k}(t) - x_{j+k}\} - \sum_{j = 1}^\infty \sigma^j V_{j+k}(t).
\end{equation}
For $M \in \mathbb{N}$, let $\{X_j^M\}_{1 \leq j \leq M}$ be defined as in the paragraph preceding Lemma \ref{lem:bd_by_fin}. By Lemma \ref{lem:bd_by_fin}\ref{it:bf2} and Lemma \ref{lem:lppcontrol}, we have 
\begin{equation} \label{eq:xjk_bd}
X_{j+k}(t) \leq X_{j+k}^{j+k}(t) \leq x_{j+k} + \cV_{j+k}^+(1,t), \quad t \in [0,\infty).
\end{equation}
Applying the bound \eqref{eq:xjk_bd} to each term of the first sum in \ref{eq:locrep}, we obtain, almost surely, for all $t \in [0,\infty)$,
\begin{equation} \label{eq:finalloc_bd}
qL_{(k,k+1)}(t) \leq \sum_{j = 1}^\infty \sigma^j\{\mathcal{V}^+_{j+k}(1,t) - V_{j+k}(t)\},
\end{equation}
and this proves \ref{it:lbds1}.

(ii) We obtain \ref{it:lbds2} as a consequence of the previous result as follows. Fix $k \in \mathbb{N}$, and recall the collision times $\tau_k$, defined by \eqref{eq:nextcol}, and the translation maps $\utr_t$ and $\ntr_t$, defined by \eqref{eq:translate}. For $j \in \mathbb{N}$, $t \in [0,\infty)$, define
\begin{equation} \label{eq:spacetimeshift}
\begin{split}
x_j' & = X_{j + k - 1}(\tau_k), \\
g_j' & = g_{j + k - 1}, \\
X_j'(t) & = \utr_{\tau_k} X_{j+k-1}(t) = X_{j + k - 1}(t + \tau_k), \\
B_j'(t) & = \ntr_{\tau_k}B_{j + k - 1}(t) = B_{j + k - 1}(t +\tau_k) - B_{j + k - 1}(\tau_k), \\
L_{(j,j+1)}'(t) & = \ntr_{\tau_k}L_{(j + k - 1,j + k)}(t) = L_{(j + k - 1,j + k)}(t + \tau_k) - L_{(j + k - 1,j + k)}(\tau_k), \\
L_{(0,1)}'(t) & = 0.
\end{split}
\end{equation}
Also, define
\begin{equation} \label{eq:shiftedVs}
\begin{split}
V'_j(t) & = g_j' t + B_j'(t) = V_{j+k-1}(t + \tau_k) - V_{j + k-1}(\tau_k) = \ntr_{\tau_k}V_{j+k-1}(t). 
\end{split}
\end{equation}
Note that on the time interval $[\tau_k,\tau_{k-1})$, the particle $X_k$ never collides with $X_{k-1}$, and thus 
$$
L_{(k-1,k)}(t) - L_{(k-1,k)}(\tau_k) = 0 \text{ for all } t \in [\tau_k,\tau_{k-1}]
$$
(we obtain the equality at the endpoint $t = \tau_{k-1}$ by continuity). Using this, one may show that, on the interval $[0,\tau_{k-1} - \tau_k]$, $\{X_j', j \in \mathbb{N}\}$ is an infinite system of competing Brownian particles with parameters $p,\{g_j'\}$, with initial data $\{x_j'\}$. Moreover, the associated collection of local times is $\{L_{(j-1,j)}', j \in \mathbb{N}\}$, and this collection satisfies the condition \eqref{eq:cncstar2}. To see this, note that since $\tau_k \leq T$, and the local times are non-negative and non-decreasing, for any $t \in [0,T)$,
$$
L'_{(j,j+1)}(t) = L_{(j+k-1,j+k)}(t + \tau_k) - L_{(j+k-1,j+k)}(\tau_k) \leq L_{j+k-1}(T + t),
$$
and $\sigma^M L_{M-1+k}(T + t) \to 0$ as $M \to \infty$ by \eqref{eq:cncstar2}. Consequently, we may apply the result \ref{it:lbds1} to the new system to obtain, almost surely, for $t \in [0, \tau_{k-1} - \tau_k]$, 
$$
L_{(k,k+1)}(t + \tau_k) - L_{(k,k+1)}(\tau_k) = L'_{(1,2)}(t) \leq q^{-1}\sum_{j = 1}^\infty \sigma^j\left\{\cV^+_{j+1}(k,[\tau_k,t+\tau_k]) - V'_{j+1}(t)\right\}.
$$
The last inequality follows on observing that, in the definition \eqref{eq:VMplusdef} of $\cV_M^+(i,t)$, if we replace each $V_j$ with $V_j'$, we obtain $\cV_M^+(i+k-1,[\tau_k,t + \tau_k])$. The above bound is equivalent to \eqref{eq:bdaftertau}, proving \ref{it:lbds2}.

(iii) By the result \ref{it:lbds2}, almost surely,
\begin{equation} \label{eq:sumdiff_lbd}
\begin{split}
\sum_{k = i}^{i+M} (L_{(k,k+1)}(\tau_{k-1}) - L_{(k,k+1)}(\tau_k)) & \leq q^{-1} \sum_{j = 1}^\infty \sigma^j \Bigg\{ \sum_{k = i}^{i+M}  \cV^+_{j+1}(k,[\tau_k,\tau_{k-1}]) \\
& \hspace{0.7in} - \sum_{k = i}^{i+M} \left( V_{j+k}(\tau_{k-1}) - V_{j+k}(\tau_k) \right) \Bigg\}.
\end{split}
\end{equation}
The right-hand side of \eqref{eq:sumdiff_lbd} is bounded above by 
\[
\begin{split}
q^{-1} \sum_{j = 1}^\infty \sigma^j \Bigg\{\sup_{\substack{0 = s_{i+M} \leq s_{i+M-1} \\
 \quad \leq \cdots \leq s_i \leq s_{i-1} = T}} & \sum_{k = i}^{i+M}  \cV^+_{j+1}(k,[s_k,s_{k-1}]) \\
& - \inf_{\substack{0 = s_{i+M} \leq s_{i+M-1} \\
 \quad \leq \cdots \leq s_i \leq s_{i-1} = T}} \sum_{k = i}^{i+M} \left( V_{j+k}(s_{k-1}) - V_{j+k}(s_k) \right)\Bigg\}.
\end{split}
\]
Notice that the second quantity appearing in the braces is by definition $\cV^-_{M+1}(j+i,T)$. It remains to show that the first quantity in the braces is bounded by $|\bar{g}|_\infty + \cU(j,M,T)$. Expanding the definition of $\cV^+_{j+1}(k,[s_k,s_{k-1}])$ gives us
\[
\begin{split}
\sup_{\substack{0 = s_{i+M} \leq s_{i+M-1} \\
 \quad \leq \cdots \leq s_i \leq s_{i-1} = T}} & \sum_{k = i}^{i+M}  \cV^+_{j+1}(k,[s_k,s_{k-1}]) \\
& = \sup_{\substack{0 = s_{i+M} \leq s_{i+M-1} \\
 \quad \leq \cdots \leq s_i \leq s_{i-1} = T}} \sum_{k = i}^{i+M} \ \sup_{\substack{s_k = t_{k-1}^{(k)} \leq t_k^{(k)}
 \leq \cdots \\ \quad\quad \leq t_{j+k-1}^{(k)} \leq t_{j+k}^{(k)} = s_{k-1}}} \sum_{\ell = k}^{j+k} \ (V_{\ell}(t_\ell^{(k)}) - V_{\ell}(t_{\ell - 1}^{(k)})) \\
& = \sup_{\{t_\ell^{(k)}\} \in D_{M,j}(i,T)} \sum_{k = i}^{i+M} \ \sum_{\ell = k}^{j+k} \ (V_{\ell}(t_\ell^{(k)}) - V_{\ell}(t_{\ell - 1}^{(k)})).
\end{split}
\]
Thus
\[
\begin{split}
\sup_{\substack{0 = s_{i+M} \leq s_{i+M-1} \\
 \quad \leq \cdots \leq s_i \leq s_{i-1} = T}} & \sum_{k = i}^{i+M}  \cV^+_{j+1}(k,[s_k,s_{k-1}]) \\
& \leq \sup_{\{t_\ell^{(k)}\} \in D_{M,j}(i,T)} \sum_{k = i}^{i+M} \ \sum_{\ell = k}^{j+k} \ \left\{|\bar g|_\infty(t_{\ell}^{(k)} - t_{\ell - 1}^{(k)}) + (B_{\ell}(t_{\ell}^{(k)}) - B_{\ell}(t_{\ell-1}^{(k)})) \right\} \\
& = |\bar g|_\infty T + \sup_{\{t_\ell^{(k)}\} \in D_{M,j}(i,T)} \sum_{k = i}^{i+M} \ \sum_{\ell = k}^{j+k} \ B_{\ell}(t_{\ell - 1}^{(k)}) - B_{\ell}(t_{\ell}^{(k)}) \\
& = |\bar g|_\infty T + \cU(j,M,T),
\end{split}
\]
as desired.
\end{proof}

\begin{lemma} \label{lem:Uconc}
There exists a constant $C$ such that, for any $j,M \in \mathbb{N}$, $T \in (0,\infty)$, and $\delta \in (0,\infty)$, 
\[
\mathbb{P}\left( \frac{\cU(j,M,T)}{\sqrt{(j+1)(M+1)T}} \geq 2 + \delta \right) \leq  C \exp( -(j+1)(M+1)(\delta^{3/2} \wedge \delta^3)/C).
\]
\end{lemma}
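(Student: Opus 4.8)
The plan is to view $\cU(j,M,T)$ as the supremum of a centered Gaussian process of constant variance and to combine the Borell--TIS inequality with a Sudakov--Fernique comparison against a zero-drift Brownian last passage value. Put $N := (j+1)(M+1)$ and, for $\mathbf{t} = \{t_\ell^{(k)}\} \in D_{M,j}(i,T)$, write
\[
L(\mathbf{t}) := \sum_{k=i}^{i+M}\ \sum_{\ell=k}^{k+j}\ \bigl(B_\ell(t_\ell^{(k)}) - B_\ell(t_{\ell-1}^{(k)})\bigr),
\]
so that $\cU(j,M,T) = \sup_{\mathbf{t}\in D_{M,j}(i,T)} L(\mathbf{t})$ over the compact set $D_{M,j}(i,T)$. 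First I would record that the order constraints defining $D_{M,j}(i,T)$ force the time points entering $L(\mathbf{t})$ to form a single non-decreasing chain running from $0$ to $T$, with the blocks concatenated end to end; consequently, for each fixed $\ell$ the increments of $B_\ell$ occurring in $L(\mathbf{t})$ are over pairwise disjoint subintervals of $[0,T]$, whence $\Var(L(\mathbf{t})) = \sum_{k,\ell}(t_\ell^{(k)} - t_{\ell-1}^{(k)}) = T$ for every $\mathbf{t} \in D_{M,j}(i,T)$. The Borell--TIS inequality (\cite[Theorem 2.1.1]{AdlerTaylor}) then gives
\begin{equation} \label{eq:Uborell}
\mathbb{P}\bigl( \cU(j,M,T) \geq \mathbb{E}[\cU(j,M,T)] + u \bigr) \leq \exp\bigl(-u^2/(2T)\bigr), \qquad u > 0.
\end{equation}

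The main work is to show $\mathbb{E}[\cU(j,M,T)] \leq \bigl(2 + c_0 N^{-1/3}\bigr)\sqrt{NT}$ for a universal constant $c_0$. For this I would introduce a family $\{\hat B_\ell^{(k)} : i \leq k \leq i+M,\ k \leq \ell \leq k+j\}$ of $N$ mutually independent standard Brownian motions (one per increment) and the auxiliary centered Gaussian process $\tilde L(\mathbf{t}) := \sum_{k=i}^{i+M}\sum_{\ell=k}^{k+j}\bigl(\hat B_\ell^{(k)}(t_\ell^{(k)}) - \hat B_\ell^{(k)}(t_{\ell-1}^{(k)})\bigr)$ on $D_{M,j}(i,T)$, and then establish the increment comparison $\mathbb{E}[(L(\mathbf{t})-L(\mathbf{t}'))^2] \leq \mathbb{E}[(\tilde L(\mathbf{t})-\tilde L(\mathbf{t}'))^2]$ for all $\mathbf{t},\mathbf{t}' \in D_{M,j}(i,T)$. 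Grouping the terms of $L(\mathbf{t}) - L(\mathbf{t}')$ by the shared index $\ell$, one writes $L(\mathbf{t}) - L(\mathbf{t}') = \sum_\ell \sum_k Z_k^{(\ell)}$ with $Z_k^{(\ell)} = B_\ell(I_k) - B_\ell(I_k')$ the difference of the increments of $B_\ell$ over the block-$k$ subintervals $I_k, I_k' \subseteq [0,T]$ prescribed by $\mathbf{t},\mathbf{t}'$. Since under either partition the subintervals associated with distinct blocks are disjoint, a direct expansion gives, for $k \neq k'$, $\text{Cov}(Z_k^{(\ell)}, Z_{k'}^{(\ell)}) = -\,|I_k \cap I_{k'}'| - |I_k' \cap I_{k'}| \leq 0$, so $\Var\bigl(\sum_k Z_k^{(\ell)}\bigr) \leq \sum_k \Var(Z_k^{(\ell)})$; summing over $\ell$ and using independence of the $B_\ell$'s yields $\mathbb{E}[(L(\mathbf{t})-L(\mathbf{t}'))^2] = \sum_\ell \Var\bigl(\sum_k Z_k^{(\ell)}\bigr) \leq \sum_{\ell,k}\Var(Z_k^{(\ell)}) = \mathbb{E}[(\tilde L(\mathbf{t})-\tilde L(\mathbf{t}'))^2]$, the last step since the $\hat B_\ell^{(k)}$ are all independent. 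By the Sudakov--Fernique inequality (see, e.g., \cite{AdlerTaylor}), $\mathbb{E}[\cU(j,M,T)] = \mathbb{E}[\sup_{\mathbf{t}} L(\mathbf{t})] \leq \mathbb{E}[\sup_{\mathbf{t}} \tilde L(\mathbf{t})]$. Since $D_{M,j}(i,T)$ parametrizes a family of non-decreasing partitions of $[0,T]$ into at most $N$ subintervals, and $\tilde L$ attaches an independent Brownian motion to each subinterval, $\sup_{\mathbf{t}}\tilde L(\mathbf{t})$ is stochastically dominated by the zero-drift last passage value $\cV_N^+(1,T,\bar{0})$; by Remark \ref{rem:maxeig} this equals in law the largest eigenvalue $\lambda_{\mathrm{max}}$ of an $N\times N$ GUE matrix with $N(0,T)$ entries, and integrating the GUE large/small deviation bounds \cite{smalldev2010} (as in the proof of Lemma \ref{lem:lpp}\ref{it:lpp1} with $\bar{g} = \bar{0}$) yields $\mathbb{E}[\cV_N^+(1,T,\bar{0})] \leq \bigl(2 + c_0 N^{-1/3}\bigr)\sqrt{NT}$.

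To conclude, fix $\delta > 0$. If $\delta \leq 2c_0 N^{-1/3}$, then $N(\delta^{3/2}\wedge\delta^3) \leq N\delta^3 \leq (2c_0)^3$, so the right-hand side of the claimed inequality exceeds a positive constant and the bound holds once $C$ is taken large enough. If $\delta > 2c_0 N^{-1/3}$, then $\mathbb{E}[\cU(j,M,T)] \leq (2 + \tfrac{\delta}{2})\sqrt{NT}$, so \eqref{eq:Uborell} with $u = \tfrac{\delta}{2}\sqrt{NT}$ gives $\mathbb{P}\bigl(\cU(j,M,T) \geq (2+\delta)\sqrt{NT}\bigr) \leq \exp(-N\delta^2/8)$; since $\delta^2 \geq \delta^{3/2}\wedge\delta^3$ for all $\delta>0$, the right-hand side is at most $\exp\bigl(-N(\delta^{3/2}\wedge\delta^3)/8\bigr)$, and a single sufficiently large $C$ handles both cases. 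I expect the main obstacle to be the Sudakov--Fernique step, specifically checking that replacing the shared driving motions $B_\ell$ by independent copies $\hat B_\ell^{(k)}$ does not decrease the canonical metric of the process; the key point is the elementary but somewhat delicate identity $\text{Cov}(Z_k^{(\ell)}, Z_{k'}^{(\ell)}) = -\,|I_k \cap I_{k'}'| - |I_k' \cap I_{k'}|$ for $k \neq k'$, which relies on the disjointness of the per-block subintervals under a fixed partition. The constant-variance computation, the identification with a GUE eigenvalue, and the tail integration are routine.
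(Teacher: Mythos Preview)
Your argument is correct and shares the same core idea as the paper's proof: compare $\cU(j,M,T)=\sup_{\mathbf t}G(\mathbf t)$ to the supremum of the ``decoupled'' process $H(\mathbf t)$ built from independent Brownian motions $\tilde B_\ell^{(k)}$, via the increment inequality $\mathbb E[(G(\mathbf s)-G(\mathbf t))^2]\le \mathbb E[(H(\mathbf s)-H(\mathbf t))^2]$, whose proof (the identity $\mathrm{Cov}(Z_k^{(\ell)},Z_{k'}^{(\ell)})=-|I_k\cap I_{k'}'|-|I_k'\cap I_{k'}|\le 0$ for $k\neq k'$) is exactly the paper's computation. The difference is in how the comparison is exploited: the paper additionally checks $\Var G(\mathbf t)=\Var H(\mathbf t)$ and applies \emph{Slepian's inequality}, obtaining the full tail comparison $\mathbb P(\sup G\ge u)\le \mathbb P(\sup H\ge u)$ and then invoking the GUE tail bound \eqref{eq:VMplusbd} directly. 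You instead apply \emph{Sudakov--Fernique} to get only $\mathbb E[\sup G]\le \mathbb E[\sup H]\le(2+c_0N^{-1/3})\sqrt{NT}$, and recover the tail via Borell--TIS plus a small/large-$\delta$ split. The paper's route is shorter and avoids the case analysis; your route in fact yields the sharper Gaussian exponent $N\delta^2$ (which you then relax to $N(\delta^{3/2}\wedge\delta^3)$), and does not rely on the equal-variance check, so it would survive in settings where the pointwise variances only satisfy an upper bound.
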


\begin{proof}
Fix $i,j,M \in \mathbb{N}$ and $T \in (0,\infty)$. We define centered Gaussian processes $G$ and $H$ on $D_{M,j}(i,T)$ as follows: For $\mathbf{t} = \{t_\ell^{(k)}\} \in D_{M,j}(i,T)$, 
\[
G(\mathbf{t}) := \sum_{k = i}^{i+M} \ \sum_{\ell = k}^{j+k} \ B_{\ell}(t_\ell^{(k)}) - B_{\ell}(t_{\ell - 1}^{(k)}),
\]
and
\[
H(\mathbf{t}) := \sum_{k = i}^{i+M} \ \sum_{\ell = k}^{j+k} \ \tilde B_{\ell}^{(k)}(t_\ell^{(k)}) - \tilde B_{\ell}^{(k)}(t_{\ell - 1}^{(k)}),
\]
where $\{\tilde B_\ell^{(k)}, i \leq k \leq i+M, k \leq \ell \leq j + k\}$ is a collection of i.i.d.\ standard Brownian motions, independent of the Brownian motions $\{B_\ell\}$.

We will prove the following inequality: For all real $u$,
\begin{equation} \label{eq:dist_comp}
\mathbb{P}\left( \sup_{\mathbf{t} \in D_{M,j}(i,T)} G(\mathbf{t}) \geq u \right) \leq \mathbb{P}\left( \sup_{\mathbf{t} \in D_{M,j}(i,T)} H(\mathbf{t}) \geq u \right).
\end{equation}
To see how the lemma may be obtained from this result, observe that 
\begin{equation} \label{eq:supG}
\sup_{\mathbf{t} \in D_{M,j}(i,T)} G(\mathbf{t}) = \cU(j,M,T).
\end{equation}
Furthermore,
\begin{equation} \label{eq:suptG}
\sup_{\mathbf{t} \in D_{M,j}(i,T)} H(\mathbf{t}) \stackrel{\mbox{\tiny{dist}}}{=} \cV^+_{(j+1)(M+1)}(i,T,\bar{0}).
\end{equation}
Here $\cV_M^+(i,T,\bar{0})$ denotes the quantity \eqref{eq:VMplusdef} with $\bar{g} = \bar{0}$ (equivalently, with $V_j = B_j$ for $j \in \mathbb{N}$). The inequality \eqref{eq:dist_comp} therefore implies that, for all $\delta > 0$,
\begin{equation} \label{eq:dist_comp2}
\mathbb{P}\left( \frac{\cU(j,M,T)}{\sqrt{(j+1)(M+1)T}} \geq 2 + \delta \right) \leq \mathbb{P}\left( \frac{\cV^+_{(j+1)(M+1)}(i,T,\bar{0})}{\sqrt{(j+1)(M+1)T}} \geq 2 + \delta \right).
\end{equation}
By the proof of Lemma \ref{lem:lpp}\ref{it:lpp1} (see \eqref{eq:VMplusbd}), we also have the following bound:
\begin{equation} \label{eq:cheby}
\mathbb{P}\left( \left| \frac{\cV^+_{(j+1)(M+1)}(i,T,\bar{0})}{\sqrt{(j+1)(M+1)T}} - 2\right| \geq \delta \right) \leq c_1 \exp( -(j+1)(M+1)(\delta^{3/2} \wedge \delta^3)/c_1),
\end{equation}
where $c_1 \in (0,\infty)$ is a universal constant. The result then follows by combining the bounds \eqref{eq:dist_comp2} and \eqref{eq:cheby}.

We now set out to prove \eqref{eq:dist_comp}. By Slepian's Inequality \cite[Theorem 2.2.1]{AdlerTaylor}, it suffices to show that for all $\mathbf{s} = \{s_\ell^{(k)}\}$ and $\mathbf{t} = \{t_\ell^{(k)}\}$ in $D_{M,j}(i,T)$,  
\begin{equation} \label{eq:samevar}
\mathbb{E}[G(\mathbf{t})^2] = \mathbb{E}[H(\mathbf{t})^2],
\end{equation}
and 
\begin{equation} \label{eq:L2_comp}
\mathbb{E}[(G(\mathbf{s}) - G(\mathbf{t}))^2] \leq \mathbb{E}[(H(\mathbf{s}) - H(\mathbf{t}))^2].
\end{equation}
We compute 
\begin{equation} \label{eq:sigmasq}
\begin{split}
\mathbb{E}[G(\mathbf{t})^2]
& = \mathbb{E}\left[\left( \sum_{k = i}^{i+M} \sum_{\ell = k}^{j + k} \ B_{\ell}(t_\ell^{(k)}) - B_{\ell}(t_{\ell - 1}^{(k)})\right)^2\right] \\
& = \sum_{k = i}^{i+M} \sum_{\ell = k}^{j + k} \mathbb{E}\left[\left( B_{\ell}(t_\ell^{(k)}) - B_{\ell}(t_{\ell - 1}^{(k)})\right)^2\right] \\
&= \sum_{k = i}^{i+M} \sum_{\ell = k}^{j + k} \mathbb{E}\left[\left( \tilde B^{(k)}_{\ell}(t_\ell^{(k)}) - \tilde B^{(k)}_{\ell}(t_{\ell - 1}^{(k)})\right)^2\right]\\
&= \mathbb{E}[H(\mathbf{t})^2],
\end{split}
\end{equation}
where the last two lines follow on recalling the properties of the collection $\{\tilde B_\ell^{(k)}, i \leq k \leq i+M, k \leq \ell \leq j + k\}$. This proves \eqref{eq:samevar}.
We also compute
\begin{equation} \label{eq:Hidff}
\begin{split}
\mathbb{E}[(H(\mathbf{s}) & - H(\mathbf{t}))^2] \\
& = \mathbb{E}\left[ \left( \sum_{k = i}^{i+M} \ \sum_{\ell = k}^{j+k} \ \tilde B_{\ell}^{(k)}(s_\ell^{(k)}) - \tilde B_{\ell}^{(k)}(s_{\ell - 1}^{(k)}) - \tilde B_{\ell}^{(k)}(t_\ell^{(k)}) + \tilde B_{\ell}^{(k)}(t_{\ell - 1}^{(k)}) \right)^2 \right] \\
& = \sum_{k = i}^{i+M} \ \sum_{\ell = k}^{j+k} \mathbb{E}\left[ \left( \tilde B_{\ell}^{(k)}(s_\ell^{(k)}) - \tilde B_{\ell}^{(k)}(s_{\ell - 1}^{(k)}) - \tilde B_{\ell}^{(k)}(t_\ell^{(k)}) + \tilde B_{\ell}^{(k)}(t_{\ell - 1}^{(k)}) \right)^2 \right] \\
& = \sum_{k = i}^{i+M} \ \sum_{\ell = k}^{j+k} (s_\ell^{(k)} - s_{\ell - 1}^{(k)}) + (t_\ell^{(k)} - t_{\ell - 1}^{(k)}) - 2|[s_{\ell-1}^{(k)},s_\ell^{(k)}] \cap [t_{\ell-1}^{(k)},t_\ell^{(k)}]| \\
& = \sum_{k = i}^{i+M} \ \sum_{\ell = k}^{j+k} |[s_{\ell-1}^{(k)},s_\ell^{(k)}] \triangle [t_{\ell-1}^{(k)},t_\ell^{(k)}]|,
\end{split}
\end{equation}
where $\triangle$ denotes symmetric difference. The corresponding calculation for $G(\mathbf{t})$ is more complicated due to the additional dependencies among the terms. Interchanging the order of summation in the definition of $G(\mathbf{t})$, we may write
\[
G(\mathbf{t}) = \sum_{\ell = i}^{j+i+M} \ \sum_{k = (\ell - j) \vee i}^{\ell \wedge (i+M)} \ (B_{\ell}(t_\ell^{(k)}) - B_{\ell}(t_{\ell - 1}^{(k)})).
\]
Hence, with $\mathbf{u} \in D_{M,j}(i,T)$, denoting 
$\Delta^{(k)}_{\ell}(\mathbf{u}) = \ B_{\ell}(u_\ell^{(k)}) - B_{\ell}(u_{\ell - 1}^{(k)})$,
\begingroup
\allowdisplaybreaks
\begin{align*}
&\mathbb{E}[(G(\mathbf{s}) - G(\mathbf{t}))^2] = \mathbb{E}\left[\left( \sum_{\ell = i}^{j+i+M} \ \sum_{k = (\ell - j) \vee i}^{\ell \wedge (i+M)} \ (\Delta^{(k)}_{\ell}(\mathbf{s}) -\Delta^{(k)}_{\ell}(\mathbf{t}))\right)^2\right] \\
& = \sum_{\ell = i}^{j+i+M} \mathbb{E}\left[\left( \sum_{k = (\ell - j) \vee i}^{\ell \wedge (i+M)} \ (\Delta^{(k)}_{\ell}(\mathbf{s}) -\Delta^{(k)}_{\ell}(\mathbf{t}))\right)^2 \right] \\
& = \sum_{\ell = i}^{j+i+M} \Bigg\{ \sum_{k = (\ell - j) \vee i}^{\ell \wedge (i+M)} \mathbb{E}\left[\left( \Delta^{(k)}_{\ell}(\mathbf{s}) -\Delta^{(k)}_{\ell}(\mathbf{t}) \right)^2\right] \\
& \hspace{0.4in} + 2\cdot \sum_{(\ell - j) \vee i \leq k < m \leq \ell \wedge (i+M)} \mathbb{E}\left[ \left( \Delta^{(k)}_{\ell}(\mathbf{s}) -\Delta^{(k)}_{\ell}(\mathbf{t}) \right)  \left( \Delta^{(m)}_{\ell}(\mathbf{s}) -\Delta^{(m)}_{\ell}(\mathbf{t}) \right) \right] \Bigg\} \\
& = \sum_{\ell = i}^{j+i+M} \Bigg\{ \sum_{k = (\ell - j) \vee i}^{\ell \wedge (i+M)} |[s_{\ell-1}^{(k)},s_\ell^{(k)}] \triangle [t_{\ell-1}^{(k)},t_\ell^{(k)}]| \\
& \hspace{0.4in}+ 2 \cdot \sum_{(\ell - j) \vee i \leq k < m \leq \ell \wedge (i+M)} \left( \mathbb{E}\left[ -\Delta^{(k)}_{\ell}(\mathbf{s})\Delta^{(m)}_{\ell}(\mathbf{t}) \right]  + \mathbb{E}\left[ -\Delta^{(m)}_{\ell}(\mathbf{s})\Delta^{(k)}_{\ell}(\mathbf{t}) \right]  \right) \Bigg\} \\
& = \sum_{\ell = i}^{j+i+M} \Bigg\{ \sum_{k = (\ell - j) \vee i}^{\ell \wedge (i+M)} |[s_{\ell-1}^{(k)},s_\ell^{(k)}] \triangle [t_{\ell-1}^{(k)},t_\ell^{(k)}]| \\
& \hspace{0.4in} - 2 \cdot \sum_{(\ell - j) \vee i \leq k < m \leq \ell \wedge (i+M)} \left(|[s_{\ell-1}^{(k)},s_{\ell}^{(k)}] \cap [t_{\ell-1}^{(m)},t_{\ell}^{(m)}]| + |[s_{\ell-1}^{(m)},s_{\ell}^{(m)}] \cap [t_{\ell-1}^{(k)},t_{\ell}^{(k)}]|\right) \Bigg\}.
\end{align*}
\endgroup

Noting that the second quantity in the last expression is nonpositive, we obtain the bound
\begin{equation} \label{eq:Gdiffbd}
\begin{split}
\mathbb{E}[(G(\mathbf{s}) - G(\mathbf{t}))^2] & \leq \sum_{\ell = i}^{j+i+M} \sum_{k = (\ell - j) \vee i}^{\ell \wedge (i+M)} |[s_{\ell-1}^{(k)},s_\ell^{(k)}] \triangle [t_{\ell-1}^{(k)},t_\ell^{(k)}]| \\
& = \sum_{k = i}^{i+M} \sum_{\ell = k}^{j + k} |[s_{\ell-1}^{(k)},s_\ell^{(k)}] \triangle [t_{\ell-1}^{(k)},t_\ell^{(k)}]|.
\end{split}
\end{equation}
The equality \eqref{eq:Hidff} and the bound \eqref{eq:Gdiffbd} imply \eqref{eq:L2_comp}. This completes the proof of \eqref{eq:dist_comp} and the lemma.
\end{proof}

\begin{proof}[Proof of Theorem \ref{thm:unique3}]
Let $p>q$. Consider a strong solution $\{X_j\}$ of \eqref{eq:loctimeeq}, with $N = \infty$, with driving noises $\{B_j\}$ and initial distribution $\gamma$. We denote $\{X_j(0)\} = \{x_j\} = \bar x$ and suppose that \eqref{eq:init_cond2} holds and suppose that $\bar{g} \in \ell^\infty(\mathbb{N})$. Associated local times are denoted as $\{L_{(j,j+1)}\}$ which are assumed to satisfy the condition \eqref{eq:cncstar2}.
Fix $T>0$. In view of Lemma \ref{lem:unique}, it suffices to show that for all $i \in \mathbb{N}$  $K^*(i,T,\{X_j\}) < \infty$ a.s.

 Consider a sequence $t_{i+M} = 0 \leq t_{i +M - 1} \leq t_{i+M-2} \leq \cdots \leq t_i \leq t_{i-1} = T$. By \eqref{eq:loctimeeq}, for $i \leq k \leq i + M - 1$, 
\begin{equation} \label{eq:xdiffbd}
\begin{split}
X_k(t_{k-1}) - X_k(t_k) & = V_k(t_{k-1}) - V_k(t_k) + p(L_{(k-1,k)}(t_{k-1}) - L_{(k-1,k)}(t_k)) \\
& \hspace{1in} - q(L_{(k,k+1)}(t_{k-1}) - L_{(k,k+1)}(t_k)) \\
& \geq V_k(t_{k-1}) - V_k(t_k) - q(L_{(k,k+1)}(t_{k-1}) - L_{(k,k+1)}(t_k)),
\end{split}
\end{equation}
here noting that $L_{(k-1,k)}$ is increasing to obtain the inequality. Summing over both sides of \eqref{eq:xdiffbd} and rearranging terms yields
\begin{equation} \label{eq:sumdiffbd}
\begin{split}
& q \sum_{k = i}^{i + M} \left( L_{(k,k+1)}(t_{k-1}) - L_{(k,k+1)}(t_k) \right) \geq - \sum_{k = i}^{i + M} \left(X_k(t_{k-1}) - X_k(t_k)\right) \\
& \hspace{2.8in} + \sum_{k = i}^{i + M} \left( V_k(t_{k-1}) - V_k(t_k) \right) \\
& \hspace{1in} \geq -X_i(T) + x_{i + M - 1} - \sum_{k = i}^{i+M-1} \left( X_{k+1}(t_k) - X_k(t_k) \right) + \cV_{M+1}^-(i,T).
\end{split}
\end{equation}
where to obtain the second line, we rearrange terms in the first sum, and we bound the second sum  by the quantity \eqref{eq:VMdef}.

Recall the quantity $\cU(j,M,T)$ from Lemma \ref{lem:localtimebds}\ref{it:lbds3}. We will next prove the following claim:

\vspace{0.1in}

\noindent\textbf{Claim:} For all $M \in \mathbb{N}$, on the event $K^*(i,T) \geq M$, the following bound holds almost surely:
\begin{equation} \label{eq:init_ubd}
x_{i+M-1} \leq X_i(T) - \cV^-_{M+1}(i,T) + \sum_{j = 1}^\infty \sigma^j \left\{|\bar{g}|_\infty T + \cU(j,M,T) - \cV_{M+1}^-(j+i,T) \right\}.
\end{equation}

To prove the claim, assume that $K^*(i,T) \geq M$. We must have $X_{k+1}(\tau_k) = X_k(\tau_k)$ for $i \leq k \leq i + M - 1$ (i.e.\ the infimum appearing in the right-hand side of \eqref{eq:nextcol} is bounded above by $T$). Indeed, if $K^*(i,T) \geq M$, then there exists a sequence $0 \leq s_{i+M-1} \leq \cdots \leq s_i \leq T$ such that $X_k(s_k) = X_{k+1}(s_k)$ for $i \leq k \leq i + M - 1$; consequently, the infimum in \eqref{eq:nextcol} is bounded above by $s_k \leq T$ for $i \leq k \leq i + M - 1$. Setting $t_k = \tau_k$ for $i-1 \leq k \leq i + M$, we see that the  sum in the last line of \eqref{eq:sumdiffbd} vanishes, and thus
\begin{equation} \label{eq:sumdiffbd2}
q \sum_{k = i}^{i + M} \left( L_{(k,k+1)}(\tau_{k-1}) - L_{(k,k+1)}(\tau_k) \right)  \geq -X_i(T) + x_{i + M - 1} + \cV^-_{M+1}(i,T).  
\end{equation}
Applying the bound from Lemma \ref{lem:localtimebds}\ref{it:lbds3} to the left-hand side and rearranging terms yields \eqref{eq:init_ubd}, proving the claim.

By Lemma \ref{lem:lpp}\ref{it:lpp1}, for any $\varepsilon_0 \in (0,\infty)$, there exists $C = \tilde C(\varepsilon_0,T) \in (0,\infty)$ such that the following bound holds: For all $\varepsilon \in [\varepsilon_0,\infty)$, and $i, j,M \in \mathbb{N}$,
\begin{equation} \label{eq:probbds1}
\mathbb{P}\left( \left| \frac{\cV^-_{M+1}(i+j,T)}{\sqrt{(M+1)T}} + 2 \right| \geq \varepsilon \right) \leq C \exp(-(M+1)(\varepsilon^{3/2} \wedge \varepsilon^3)/C).
\end{equation}
Furthermore, by Lemma \ref{lem:Uconc}, there exists $\tilde{C} \in (0,\infty)$ such that for all  $j,M \in \mathbb{N}$,
\begin{equation} \label{eq:probbds2}
\mathbb{P}\left( \frac{|\cU(j,M,T)|}{\sqrt{(j+1)(M+1)T}} \geq 2 + \varepsilon \right) \leq  \tilde C \exp( -(j+1)(M+1)(\varepsilon^{3/2} \wedge \varepsilon^3)/\tilde C).
\end{equation}
To understand the second bound, recall \eqref{eq:supG}. Note that $G(\mathbf{t}) \stackrel{\text{dist}}{=} -G(\mathbf{t})$, and consequently $\cU(j,M,T) = \sup_{\mathbf{t}} G(\mathbf{t})$ stochastically dominates $-\cU(j,M,T)$. Therefore, the same bound from Lemma \ref{lem:Uconc} holds if we replace $\cU(j,M,T)$ with $-\cU(j,M,T)$, and we then obtain \eqref{eq:probbds2} through a union bound.

In \eqref{eq:probbds1} taking $\varepsilon = \varepsilon_0(j+1)$ for some fixed $\varepsilon_0>0$, and using the Borel-Cantelli lemma we see that, for a.e. $\omega$, there exist $M_0, j_0 \in \NN$ such that for all $M \ge M_0$ and $j \ge j_0$
$$\frac{|\cV_{M+1}^-(j+i,T)|}{(M+1)^{1/2}} \le \sqrt{T}(2+\varepsilon_0(j+1)).
$$
Similarly, applying the Borel-Cantelli lemma in \eqref{eq:probbds2}, we see that for a.e. $\omega$, there exist $M_1, j_1 \in \NN$ such that for all $M \ge M_1$ and $j \ge j_1$
$$\frac{|\cU(j,M,T)|}{(M+1)^{1/2}} \le \sqrt{T}(2+\varepsilon_0(j+1)).
$$
A similar use of the Borel-Cantelli lemma also show that for all fixed $i,j\in \NN$,
$$\limsup_{M\to \infty} \frac{|\cU(j,M,T)|+|\cV_{M+1}^-(j+i,T)|}{(M+1)^{1/2}}  <\infty.$$
Consequently, we have, a.s., 
\begin{equation} \label{eq:RHSlim}
\begin{split}
& \limsup_{M \to \infty} \left\{ - \frac{\cV^-_{M+1}(i,T)}{\sqrt{M+1}} + \sum_{j = 1}^\infty \sigma^j \frac{1}{\sqrt{M+1}}\left\{|\bar{g}|_\infty T + \cU(j,M,T) - \cV_{M+1}^-(j+i,T) \right\}\right\} \\
&< \infty.
\end{split}
\end{equation}

\noindent Combining the claim and \eqref{eq:RHSlim}, we obtain
\[
\begin{split}
\mathbb{P}\left( K^*(i,T) = \infty \right) & \leq \mathbb{P}\left( \text{$\forall M \in \mathbb{N}$}, \frac{x_{i+M-1}}{\sqrt{M}} \leq \frac{1}{\sqrt{M}} \cdot \text{RHS of \eqref{eq:init_ubd}} \right) \\
& \leq \mathbb{P}\left( \limsup_{M \to \infty} \frac{x_{i+M - 1}}{\sqrt{M}} < \infty \right) = 0,
\end{split}
\]
where the last equality follows by the assumption \eqref{eq:init_cond2} on the initial data. This concludes the proof.
\end{proof}

\section{Approximative solutions} \label{ssec:approximative}

Recall that Theorems \ref{thm:unique2} and \ref{thm:unique3} establish pathwise uniqueness under different conditions on the local times, namely \eqref{eq:cncstar} and \eqref{eq:cncstar2}, respectively. The goal of this section is to show that these conditions are satisfied for the most commonly investigated type of strong solution of \eqref{eq:loctimeeq}, known as the {\em (strong) approximative version}. 
Specifically, we will verify that the approximative version of the system \eqref{eq:loctimeeq} satisfies \eqref{eq:cncstar} if $p \ge  q$ and \eqref{eq:init_cond1} holds,  and it satisfies \eqref{eq:cncstar2} if $p > q$. We begin by  recalling from \cite{sarantsev2017infinite} the definition of an approximative version of an infinite system of competing Brownian particles. Fix parameters $p, \bar g$, $p \ge q$, and let a filtered probability space, Brownian motions $\{B_j\}_{j \in \mathbb{N}}$, and initial data  $\bar{x} = \{x_j\}_{j \in \mathbb{N}}$ be as at the beginning of Section \ref{sec:sepu}.

Given $M \in \mathbb{N}$, let $\bar{x}^M = \{x_j\}_{1 \leq j \leq M}$, $\bar{g}^M = \{g_j\}_{1 \leq j \leq M}$, and $\bar{B}^M = \{B_j\}_{1 \leq j \leq M}$. We let $\bar{X}^M = \{X_j^M\}_{1 \leq j \leq M}$ denote the solution to the system \eqref{eq:loctimeeq} with $N = M$ particles, relative to the initial conditions $\bar{x}^M$, drifts $\bar{g}^M$, and Brownian motions $\bar{B}^M$. We say that a solution to the infinite system $\bar{X} = \{X_j\}_{j \in \mathbb{N}}$ is a \textit{(strong) approximative version} if, for all $T \in (0,\infty)$ and $j, M_0 \in \mathbb{N}$,
\begin{equation} \label{eq:uniform}
\lim_{M \geq M_0, M \to \infty} \sup_{s \in [0,T]} |X_j(s) - X_j^M(s)| = 0 \text{ a.s.}
\end{equation}
Note that given the initial conditions, drifts, and Brownian motions, there is at most one approximative version by uniqueness of solutions to the finite-dimensional system. The paper \cite[Theorem 3.7]{sarantsev2017infinite} proves that approximative versions exist when $p \geq q$, the initial data $\bar x$ satisfy \eqref{eq:scon}, and the drift parameters satisfy $\inf_{n \in \mathbb{N}} g_n > -\infty$.

Let $\bar{L} = \{L_{(j-1,j)}\}_{j \in \mathbb{N}}$ and $\bar{L}^M = \{L_{(j-1,j)}^M\}_{1 \leq j \leq M+1}$ denote the collections of local times for the infinite approximative version solution $\bar{X}$ and the finite system $\bar{X}^M$, respectively. Note that by \eqref{eq:loctimeeq}, if \eqref{eq:uniform} holds, then the for each $j$, $L_{(j-1,j)}^M$ converges uniformly on compact intervals to $L_{(j-1,j)}$ almost surely.

The following proposition shows that the local times associated with approximative version solutions satisfy the growth conditions in Section \ref{sec:sepu}. The proofs for the two parts of the proposition require somewhat different arguments.

\begin{proposition} \label{prop:lcond}
Let $\bar{X} = \{X_j\}$ be an approximative version of the system \eqref{eq:loctimeeq}, and with initial conditions, drift parameters, and Brownian motions as above. 

\begin{enumerate}[label = (\roman*)]

\item \label{it:lcond1} If $p \ge q$, and the initial conditions $\bar x$ satisfy \eqref{eq:init_cond1}, then the collection of local times $\bar{L}$ satisfies condition \eqref{eq:cncstar}.

\item \label{it:lcond2} If $p > q$, then the collection of local times $\bar{L}$ satisfies the condition \eqref{eq:cncstar2}.

\end{enumerate}
\end{proposition}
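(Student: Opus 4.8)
The plan is to derive both parts from a single pointwise upper bound on the collision local times of the approximative version that uses neither growth hypothesis, and only afterwards split into parts (i) and (ii). For the common input, fix $p\in[0,1)$ with $p\ge q$, set $\sigma=q/p\in(0,1]$, and recall the finite $N$-particle truncations $\bar X^N$ from the definition of the approximative version. Since $L^N_{(N,N+1)}\equiv0$, the telescoping computation leading to \eqref{eq:collapse}, applied to $\bar X^N$ with the role of $M$ there played by $N-k$, gives
\[
qL^N_{(k,k+1)}(t)=\sum_{j=1}^{N-k}\sigma^j\bigl(X^N_{j+k}(t)-x_{j+k}\bigr)-\sum_{j=1}^{N-k}\sigma^jV_{j+k}(t),\qquad k<N,\ t\ge0.
\]
Bounding each term via $X^N_{j+k}\le X^{j+k}_{j+k}$ (Lemma \ref{lem:bd_by_fin}\ref{it:bf1}), $X^{j+k}_{j+k}(t)\le x_{j+k}+\tilde X^{j+k}_{j+k}(t)$ (Lemma \ref{lem:bd_by_fin}\ref{it:bf3}), and $\tilde X^{j+k}_{j+k}(t)\le\cV^+_{j+k}(1,t)$ (Lemma \ref{lem:lppcontrol}), and then letting $N\to\infty$ using the a.s.\ uniform-on-compacts convergence $L^N_{(k,k+1)}\to L_{(k,k+1)}$, I obtain
\begin{equation}\label{eq:planstar}
qL_{(k,k+1)}(t)\le\sum_{j=1}^{\infty}\sigma^j\bigl(\cV^+_{j+k}(1,t)+|V_{j+k}(t)|\bigr),\qquad k\in\NN,\ t\ge0,\ \text{a.s.}
\end{equation}
(the series on the right converges a.s.\ exactly when $\sigma<1$).

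For part (ii) we have $p>q$, so $\sigma<1$, and the argument is short and needs no assumption on $\bar x$. By Lemma \ref{lem:lpp}\ref{it:lpp1} with $\alpha=1$, $\mathbb{P}(\cV^+_m(1,T)\ge3\sqrt{mT})\le Ce^{-m/C}$, and by a Gaussian tail bound $\mathbb{P}(|V_m(T)|\ge m)\le2e^{-(m-|\bar g|_\infty T)^2/(2T)}$ for $m$ large; Borel--Cantelli then gives, a.s., $\cV^+_m(1,T)\le3\sqrt{mT}$ and $|V_m(T)|\le m$ for all large $m$, whence $\sum_{m\ge1}\sigma^m(\cV^+_m(1,T)+|V_m(T)|)<\infty$ a.s. Applying \eqref{eq:planstar} with $k=M$ yields $\sigma^MqL_{(M,M+1)}(T)\le\sum_{m>M}\sigma^m(\cV^+_m(1,T)+|V_m(T)|)\to0$ a.s., which is Condition \ref{cn:cstar2}; this matches the remark on the elementary nature of part (ii) in Remark \ref{rem:approxnote}.

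For part (i), I would first rewrite \eqref{eq:loctimeeq} as $qL_{(M,M+1)}(s)-pL_{(M-1,M)}(s)=x_M+V_M(s)-X_M(s)$, so that Condition \ref{cn:cstar} is equivalent to $\limsup_{M\to\infty}\sup_{s\in[0,T]}\frac{x_M+V_M(s)-X_M(s)}{1\vee x_M}\le0$ a.s. Since \eqref{eq:init_cond1} forces $x_M\to\infty$, while $\sup_{s\le T}|V_M(s)|/\sqrt M\to0$ a.s.\ by Gaussian tails and Borel--Cantelli, this reduces to proving $\liminf_{M\to\infty}\inf_{s\in[0,T]}X_M(s)/x_M\ge1$ a.s.; informally, particle $M$ must not fall appreciably below its starting height on $[0,T]$. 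When $p>q$ I would get this by comparison: \eqref{eq:init_cond1} implies \eqref{eq:scon}, so the $p=q=\tfrac12$ approximative version $X^{[1/2]}$ with the same $\bar x,\bar g,\{B_j\}$ exists, and the monotonicity of competing particle systems in the collision parameter (\cite{AS2,sarantsev2017infinite}) gives $X_M(s)\ge X^{[1/2]}_M(s)$, reducing the claim to $p=q=\tfrac12$. In that case the needed estimate is supplied by the trajectory and local-time control for approximative versions of rank-based diffusions in \cite{sarantsev2017infinite} --- the local-finiteness and comparison results there, e.g.\ Lemma 3.8 and Corollaries 3.10--3.12 --- which, under \eqref{eq:init_cond1}, yield $\inf_{s\le T}X_M(s)\ge x_M-o(x_M)$ a.s.\ as $M\to\infty$; this is the one place the growth hypothesis \eqref{eq:init_cond1} is genuinely used.

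The main obstacle is exactly this last step. The crude bound \eqref{eq:planstar} is inadequate here: when $p=q$ the series diverges ($\sigma=1$), and even when $p>q$ it only gives $L_{(M,M+1)}(T)=O(\sqrt M)$, which is $O(x_M)$ rather than $o(x_M)$ when $x_M\asymp\sqrt M$. The real difficulty is the circular dependence --- bounding $X_M$ from below requires bounding $L_{(M,M+1)}$ from above, which requires controlling the neighbouring particles, and so on up the system --- and breaking this circularity seems to require importing the rank-based-diffusion estimates rather than the last-passage-percolation bounds developed in this paper. Everything else is routine bookkeeping: the identity $qL_{(M,M+1)}(s)-pL_{(M-1,M)}(s)=x_M+V_M(s)-X_M(s)$, the Gaussian concentration of $V_M$, the implication \eqref{eq:init_cond1}$\Rightarrow$\eqref{eq:scon}, and the $N\to\infty$ passage in the telescoping identity.
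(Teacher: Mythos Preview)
Your treatment of part (ii) is correct and essentially identical to the paper's argument: both derive the telescoping identity for the finite $N$-particle system, bound $X^N_{j+k}-x_{j+k}$ via Lemmas \ref{lem:bd_by_fin} and \ref{lem:lppcontrol}, pass to the limit $N\to\infty$, and observe that the tail $\sum_{m>M}\sigma^m(\cV^+_m(1,T)-V_m(T))\to0$.

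For part (i), your overall strategy is also the paper's --- rewriting \ref{cn:cstar} as the lower bound $\liminf_M\inf_{s\le T}X_M(s)/x_M\ge1$ and reducing to $p=q=\tfrac12$ via monotonicity in the collision parameter. (The paper performs the comparison at the finite-$N$ level, $\tilde X^N_M\le X^N_M$ via \cite[Theorem 3.2]{AS2}, rather than between infinite approximative versions, but this is a minor organizational difference.) The genuine gap is precisely the step you flag: you appeal to ``Lemma 3.8 and Corollaries 3.10--3.12'' of \cite{sarantsev2017infinite} for the estimate $\inf_{s\le T}X_M(s)\ge x_M-o(x_M)$, but those are comparison and local-finiteness statements, not quantitative lower bounds on individual trajectories, and you do not explain how to extract the needed estimate from them.

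The paper fills this gap by a device you do not mention: the distributional correspondence between the finite $p=q=\tfrac12$ ranked system and the \emph{labeled} rank-based diffusion \eqref{eq:rbd}. In the labeled picture each coordinate is explicitly $Y_j(t)=x_j+\int_0^t g_{\pi(s)}\,ds+W_j(t)$, so the $M$-th ranked particle satisfies
\[
\hat X_M^N(t)\ \ge\ \min_{M\le j\le N}Y_j(t)\ \ge\ \min_{M\le j\le N}\bigl(x_j-|\bar g|_\infty t+W_j(t)\bigr).
\]
This converts the problem into a union bound over independent Gaussian tails, which is summable in $M$ under \eqref{eq:init_cond1} and uniform in $N$, giving \eqref{eq:TPfinsum} directly. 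This is the ``rank-based-diffusion estimate'' you gesture toward, but it is a short self-contained computation rather than a citation; the key move --- passing to the labeled system to obtain an explicit, $N$-uniform lower bound --- is what your proposal is missing.
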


\begin{proof}[Proof of Proposition \ref{prop:lcond}\ref{it:lcond1}]
Assume the conditions in the statement of part (i). Since the driving Brownian motions are independent of the initial conditions $\{x_j\}_{j \in \mathbb{N}}$, we may assume without loss of generality that the initial conditions are deterministic. Note that by \eqref{eq:loctimeeq}, the condition \eqref{eq:cncstar} is equivalent to the following: For every $T \in (0,\infty)$
$$
\limsup_{M \to \infty} \sup_{s \in [0,T]} \frac{x_M + B_M(s) - X_M(s)}{x_M} \leq 0 \text{ a.s.}
$$
In fact, since $x_M$ satisfies Condition \eqref{eq:init_cond1}, standard Gaussian tail bounds show that 
$$
\limsup_{M \to \infty} \sup_{s \in [0,T]} B_M(s)/x_M = 0 \text{ a.s.,}
$$ 
and it is enough to check that 
$$
\limsup_{M \to \infty} \sup_{s \in [0,T]} \frac{x_M - X_M(s)}{x_M} \leq 0 \text{ a.s.}
$$
By the Borel-Cantelli Lemma, it is sufficient to show that for all $\varepsilon > 0$, there exists $M_0 \in \mathbb{N}$ such that
\begin{equation} \label{eq:TPfinsum}
\sum_{M \geq M_0} \mathbb{P}\left( \sup_{s \in [0,T]} \frac{x_M - X_M(s)}{x_M} \geq \varepsilon \right) < \infty.
\end{equation}
Since $\bar{X}$ is an approximative version, we have 
\begin{equation}\label{eq:1250}
\mathbb{P}\left( \sup_{s \in [0,T]} \frac{x_M - X_M(s)}{x_M} \geq \varepsilon \right)  = \lim_{N \to \infty} \mathbb{P}\left( \sup_{s \in [0,T]} \frac{x_M - X_M^N(s)}{x_M} \geq \varepsilon \right).
\end{equation}
Denote by $\{\tilde X_j^M\}_{1 \leq j \leq M}$  the solution to the system \eqref{eq:loctimeeq} with $N = M$ particles, relative to the initial conditions $\bar{x}^M$, drifts $\bar{g}^M$,  Brownian motions $\bar{B}^M$ and collision parameters $p=q=1/2$. From \cite[Theorem 3.2]{AS2}, almost surely, for all $N \in \NN$, $1 \le j \le N$, and $t \ge 0$, $\tilde X^N_j(t) \le X^N_j(t)$. Thus, for $N \ge M$,
\begin{equation}\label{eq:1251}
\mathbb{P}\left( \sup_{s \in [0,T]} \frac{x_M - X_M^N(s)}{x_M} \geq \varepsilon \right) \le \mathbb{P}\left( \sup_{s \in [0,T]} \frac{x_M - \tilde X_M^N(s)}{x_M} \geq \varepsilon \right).
\end{equation}
We now briefly review some facts about (finite) rank-based diffusions, in connection with the system \eqref{eq:loctimeeq} with finitely many particles. Let $\bar{W} = \{W_j\}_{j \in \mathbb{N}}$ be an i.i.d.\ collection of Brownian motions, and suppose that, for some $M \in \NN$, $\bar{Y}^M = \{Y_j\}_{1 \leq j \leq M}$ is a collection of continuous processes which satisfy the system of equations
\begin{equation} \label{eq:rbd}
Y_j(t) = x_j + \int_0^t g_{\pi_{\bar{Y}^M(s)}(j)} \dd s + W_j(t), \quad 1 \leq j \leq M, \quad t \in [0,\infty).
\end{equation}
Here, for each $\bar{y} \in \mathbb{R}^M$, $\pi_{\bar{y}} : [M] \to [M]$ is the unique permutation such that $y_{\pi_{\bar{y}}(i)} \leq y_{\pi_{\bar{y}}(j)}$ whenever $1 \leq i < j \leq M$, and ties are broken by lexicographical ordering. The existence of a strong solution to \eqref{eq:rbd} and pathwise uniqueness of solutions follows from \cite[Theorem 2]{ichiba2013strong}. Furthermore, by Proposition 3 in the same paper, the ranked trajectories 
$$
\hat{X}_k(t) := Y_{\pi_{\bar{Y}^M(t)}(k)}(t), \quad \quad 1 \leq k \leq M, \quad t \in [0,\infty),
$$
solve \eqref{eq:loctimeeq} with $N = M$, initial conditions $\bar{x}^M$, drifts $\bar{g}^M$, and Brownian motions given as follows:
$$
B_j(t) = \int_0^t \sum_{k = 1}^M \mathbf{1}\{\pi_{\bar{Y}^M(s)}(j) = k\} \dd W_k(s), \quad \quad 1 \leq j \leq M, \quad t \in [0,\infty).
$$
By uniqueness in distribution of solutions to \eqref{eq:loctimeeq} in the finite-dimensional case, we have 
\begin{equation} \label{eq:ed_rtraj}
\{\tilde{X}_k\}_{1 \leq k \leq M} \stackrel{d}{=} \{\hat{X}_k\}_{1 \leq k \leq M}.
\end{equation}
Combining this fact with \eqref{eq:1250} and \eqref{eq:1251},
we have 
\begin{equation}\label{eq:103}
\mathbb{P}\left( \sup_{s \in [0,T]} \frac{x_M - X_M(s)}{x_M} \geq \varepsilon \right) 
 \le \lim_{N \to \infty} \mathbb{P}\left( \sup_{s \in [0,T]} \frac{x_M - \hat{X}_M^N(s)}{x_M} \geq \varepsilon \right).
\end{equation}
We bound the last quantity as follows. First, observe that for $1 \leq M \leq N < \infty$,
\[
\hat{X}_M^N(t) \geq \min_{M \leq j \leq N} Y_j(t) \geq \min_{M \leq j \leq N} \left( x_j - |\bar{g}|_\infty t + W_j(t) \right).
\]
Consequently, for $M$ sufficiently large, 
\[
\begin{split}
\mathbb{P}\left( \sup_{s \in [0,T]} \frac{x_M - \hat{X}_M^N(s)}{x_M} \geq \varepsilon \right) & \leq \mathbb{P}\left( \frac{|\bar{g}|_\infty T}{x_M} + \max_{M \leq j \leq N} \frac{x_M - x_j - W^*_j(T)}{x_M} \geq \varepsilon \right) \\
& \leq \mathbb{P}\left( \max_{M \leq j \leq N} \frac{x_M - x_j - W^*_j(T)}{x_M} \geq \varepsilon' \right),
\end{split}
\]
where $W_j^*(T) = \inf_{s \in [0,T]} W_j(s)$ and $\varepsilon' = \varepsilon/2$. The last quantity above is equal to
\[
\begin{split}
& 1 - \mathbb{P}\left( \max_{M \leq j \leq N} \frac{x_M - x_j - W^*_j(T)}{x_M} < \varepsilon' \right) \\
& \leq 1 - \prod_{M \leq j \leq N} \mathbb{P}\left( \frac{x_M - x_j - W^*_j(T)}{x_M} < \varepsilon' \right) \\
& = 1 - \prod_{M \leq j \leq N} \mathbb{P}\left( \frac{-W^*_j(T)}{\sqrt{T}} < \frac{x_j - (1 - \varepsilon')x_M}{\sqrt{T}} \right) \\
& \leq 1 - \prod_{M \leq j \leq N} \left( 1 - c_1 \exp( -c_2 T^{-1}(x_j - (1 - \varepsilon')x_M)^2 ) \right) \\
& \leq 1 - \prod_{M \leq j \leq N} \left( 1 - c_1 \exp( -c_3 T^{-1} \varepsilon^2 x_j^2 ) \right),
\end{split}
\]
where $c_1, c_2, c_3$ are positive constants, the second to last line follows by standard Gaussian tail bounds, and the last line follows by noting that $x_j \geq x_M$. By the assumption on the initial conditions, there exist $c_4>0$ and  $M_0 \in \NN$ such that for $j \geq M_0$, $x_j \geq c_4 \sqrt{j}$. Hence, for $M$ sufficiently large, we obtain the following upper bound for the quantity in the last line above: 
\[
\begin{split}
& 1 - \prod_{M \leq j \leq N} \left( 1 - c_1 \exp(-c_5 T^{-1}\varepsilon^2 j) \right) \\
& = 1 - \exp( \sum_{M \leq j \leq N} \log(1 - c_1 \exp(-c_5 T^{-1}\varepsilon^2 j)) ) \\
& \leq 1 - \exp( \sum_{M \leq j \leq N} - 2c_1\exp(-c_5 T^{-1}\varepsilon^2 j)) \\
& \leq 1 - \exp\left( -c_6 \exp(-c_5 T^{-1}\varepsilon^2 M) \right) \\
& \leq c_6 \exp(-c_5 T^{-1}\varepsilon^2 M).
\end{split}
\]
Here the first inequality follows because $\log(1 - a) \geq -2a$ for $0 \leq a \leq 1/2$, and the last inequality follows since $1 - e^{-a} \leq a$ for $a > 0$. As the last quantity is summable in $M$, this, in view of \eqref{eq:103}, proves \eqref{eq:TPfinsum} and completes the proof of the first part of Proposition \ref{prop:lcond}.
\end{proof}

\begin{proof}[Proof of Proposition \ref{prop:lcond}\ref{it:lcond2}] 
Let $\bar{X}^M = \{X_j^M\}_{1 \leq j \leq M}$ denote the approximating sequence as in the definition \eqref{eq:uniform} for the approximative version $\bar{X}$, and let $\bar{L}^M = \{L_{(j,j+1)}^M\}_{1 \leq j \leq M-1}$ denote the associated collection of local times. Recall that as $M \to \infty$, almost surely these local times will converge uniformly on compact intervals to the corresponding local times for the infinite system $\bar{X}$. By equation \eqref{eq:loctimeeq}, for all $k,M \in \mathbb{N}$, with $k \leq M$ and $t \in \mathbb{N}$, 
\begin{equation} \label{eq:collapse2}
\begin{split}
\sum_{j = k+1}^M \sigma^j \{X_j^M(t) - x_j\} & = \sum_{j = k+1}^M \sigma^j V_j(t) + \sum_{j = k+1}^M \{\sigma^{j-1} q L_{(j-1,j)}^M(t) - \sigma^j q L_{(j,j+1)}^M(t)\} \\
& = \sum_{j = k+1}^M \sigma^j V_j(t) + \sigma^k qL_{(k,k+1)}^M(t),
\end{split}
\end{equation}
here recalling that $L_{(M,M+1)} \equiv 0$. Note that by the bounds in Lemma \ref{lem:bd_by_fin}\ref{it:bf1} and \ref{it:bf3} and in Lemma \ref{lem:lppcontrol}, for each $j \leq M$,
\begin{equation} \label{eq:pushup_bd}
X_j^M(t) \leq X_j^j(t) \leq x_j + \tilde{X}_j^j(t) \leq x_j + \cV_j^+(1,t).
\end{equation}
Rearranging terms in \eqref{eq:collapse2} and applying the bound \eqref{eq:pushup_bd} yields
\begin{equation} \label{eq:finltbd}
\sigma^k q L_{(k,k+1)}^M(t) \leq \sum_{j = k+1}^M \sigma^j \{\cV_j^+(1,t) - V_j(t)\}.
\end{equation}
With probability 1, the sum
$$
\sum_{j = 1}^\infty \sigma^j \{\cV_j^+(1,t) - V_j(t)\}
$$
converges absolutely. This fact may easily be obtained from the probability bounds in Lemma \ref{lem:lpp} for $\cV_j^+(1,t)$ and standard Gaussian tail bounds applied to the Brownian motion $V_j(t)$. (Alternatively, one can use the following more elementary bound for $\cV_j^+(1,t)$:
$
|\cV_j^+(1,T)| \leq \sum_{k = 1}^j \sup_{t \in [0,T]} 2|B_k(t)|,
$
and then appeal to Gaussian tail bounds to control the terms.) Taking the limit as $M \to \infty$ in \eqref{eq:finltbd}, we obtain
\[
\sigma^k q L_{(k,k+1)}(t) \leq \sum_{j = k+1}^\infty \sigma^j \{\cV_j^+(1,t) - V_j(t)\}.
\]
Since the right-hand side converges to zero as $k \to \infty$, the proposition follows from this bound.
\end{proof}

\section{Proof of Theorem \ref{thm:exist}}\label{proofexist}

To establish the existence result Theorem \ref{thm:exist}, we will first prove the theorem for when the drift parameters $g_j$ are equal to zero (or equivalently $V_j = B_j$) for all $j \in \mathbb{N}$. To obtain the general case we will apply Girsanov's Theorem.

Fix a collection of independent standard Brownian motions $\{B_j(t), t \in [0,\infty), j \in \mathbb{N}\}$ given on a filtered probability space $(\Omega, \clf, \{\clf_t\}_{t\ge 0}, \PP)$, satisfying the assumptions stated in \S\ref{sec:sepu}. Also fix $\clf_0$-measurable initial data $\bar{x} = \{x_j\}_{j \in \mathbb{N}}$. The following explicit solution to the infinite system in the case $(p,q) = (0,1)$ is presented in \cite[\S2.3.1]{WFSbook}. For $n,k \in \mathbb{N}$ with $k \geq n$, let 
\[
\tilde{Y}_{n,k}(t) = \inf_{0 \leq s_{k-1} \leq s_{k-2} \leq \cdots \leq s_n \leq t} \sum_{j = n}^{k} (B_j(s_{j-1}) - B_j(s_j)),
\]
where, in the sum, $s_k := 0$ and $s_{n-1} := t$. Let 
\begin{equation} \label{eq:varform}
X_n^{0,\infty}(t) = \inf_{n \leq k < \infty} \left(x_k + \tilde{Y}_{n,k}(t) \right).
\end{equation}
By Proposition 2.4 in \cite{WFSbook}, the infimum in \eqref{eq:varform} is achieved at some finite $k \in \mathbb{N}$ almost surely, provided that the distribution of the initial condition satisfies \eqref{eq:admissible}. (Our notation differs slightly from that of \cite{WFSbook} insofar as they consider a system in which there are infinitely many particles to the \textit{left} and particles reflect off of the trajectories to their left rather than to their right, but the two systems are easily seen to be equivalent, up to a sign change and replacing infimums with supremums.) Furthermore, one may show that, for each $n \in \mathbb{N}$, the following Skorokhod representation holds:
\[
X_n^{0,\infty}(t) = x_n + B_n(t) - L_{(n,n+1)}^{0,\infty}(t),
\]
where 
\[
L_{(n,n+1)}^{0,\infty}(t) = - \inf_{0 \leq s \leq t} \left( X_{n+1}^{0,\infty}(s) - x_n - B_n(s) \right) \wedge 0.
\]
In particular, $\{X_{n+1}^{0,\infty}\}_{n \in \mathbb{N}}$ is a system of competing Brownian particles, in the sense of Definition \ref{def:cbp}, with $p = 0$, and $\bar{g} = \bar{0}$. 

\begin{proof}[Proof of Theorem \ref{thm:exist}(i)]
\textbf{Step 1.} Suppose that $g_j=0$ for all $j \in \NN$. Fix $p \in [0,1)$, $q = 1-p$, and let the initial data $\{x_j\}$, the Brownian motions $\{B_j\}$, and the filtration $\{\clf_t\}$ be as in Section \ref{sec:sepu}. Assume that the distribution $\gamma$ of the initial data satisfies \eqref{eq:admissible}. We will show that there exists a strong solution to \eqref{eq:loctimeeq} driven by Brownian motions $\{B_j\}$ and initial data $\bar x$. We may assume $p > 0$, since when $p = 0$ such a solution is given by \eqref{eq:varform}.
The following argument is adapted from the proof of  \cite[Theorem 3.7]{AS2}.
 
For each $M \geq 2$, let $\{X_j^M(t), t \in [0,\infty), 1 \leq j \leq M\}$ be the unique solution to \eqref{eq:loctimeeq} when $N = M$, with respect to the initial data $\{x_j\}_{1 \leq j \leq M}$, Brownian motions $\{B_j\}_{1 \leq j \leq M}$, collision parameters $p$ and $q$, and drifts $g_j = 0$ for $1 \leq j \leq M$. Denote the associated collection of local times $\{L^M_{(j,j+1)}\}_{0 \leq j \leq M}$.

Similarly, let $\{X_j^{0,M}(t), t \in [0,\infty), 1 \leq j \leq M\}$ denote the unique solution to \eqref{eq:loctimeeq} when $N = M$ with respect to these same initial data and Brownian motions and drifts, but with collision parameters $p = 0$ and $q = 1$.

Recall the solution \eqref{eq:varform} to the infinite system in the $p = 0, q = 1$ case. We claim that, for all $t \geq 0$, $M \geq 2$, and $1 \leq k \leq M$,
\begin{equation} \label{eq:doubbd}
X_k^{0,\infty}(t) \leq X_k^{0,M}(t) \leq X_k^M(t) \leq X_k^{M+1}(t).
\end{equation}
Here the first third inequalities follow from Lemma \ref{lem:bd_by_fin}\ref{it:bf1} and \ref{it:bf2}. To obtain the middle inequality, we proceed by downward induction on $k$. The base case $k = M$ is clear since, by \eqref{eq:loctimeeq}, for all $t \in [0,\infty)$,
$$
X_M^{0,M}(t) = x_M + B_M(t) \leq x_M + B_M(t) + p L_{(M-1,M)}^M(t) = X_M^M(t).
$$
The induction step $k+1 \to k$ proceeds as follows. We may write 
\begin{equation} \label{eq:2to1sided}
X_k^M(t) = x_k + U_k(t) - q L_{(k,k+1)}^M(t), 
\end{equation}
where 
$$
U_k(t) := B_k(t) + p L_{(k-1,k)}^M(t).
$$
By the properties of the local time $L_{(k,k+1)}^M(t)$, $X_k^M(t)$ is the Skorokhod reflection from below of the trajectory $x_k + U_k(t)$ from $X_{k+1}^M(t)$, and by Skorokhod representation, we may write 
\begin{equation} \label{eq:pskor_rep}
q L^M_{(k,k+1)}(t) = -\inf_{0 \leq s \leq t} \left( X_{k+1}^{M}(s) - x_k - U_k(s) \right) \wedge 0.
\end{equation}
On the other hand, by the Skorokhod representation for $X_k^{0,M}$ and the induction assumption, 
\begin{equation} \label{eq:0system_lb}
\begin{split}
X_k^{0,M}(t) & = x_k + B_k(t) + \inf_{0 \leq s \leq t} \left( X_{k+1}^{0,M}(s) - x_k - B_k(s) \right) \wedge 0 \\
& \leq  x_k + B_k(t) + \inf_{0 \leq s \leq t} \left( X_{k+1}^{M}(s) - x_k - B_k(s) \right) \wedge 0 \\
& \leq x_k + U_k(t) + \inf_{0 \leq s \leq t} \left( X_{k+1}^{M}(s) - x_k - U_k(s) \right) \wedge 0 \\
& = x_k + U_k(t) - q L_{(k,k+1)}^M(t) = X_k^M(t).
\end{split}
\end{equation}
Here the third line follows from the fact that the local time $L_{(k-1,k)}^M$ in the definition of $U_k$ is nondecreasing, and we use \eqref{eq:pskor_rep} and \eqref{eq:2to1sided} in the last line. This proves \eqref{eq:doubbd}.

We see from \eqref{eq:doubbd} that for each fixed $t$ and $k$, the sequence $\{X_k^M(t)\}_{M \geq 2}$ is decreasing and bounded from below, and therefore the limit 
\begin{equation} \label{eq:ptwiselim}
X_k(t) := \lim_{M \to \infty} X_k^M(t)
\end{equation}
exists and is finite. From equation \eqref{eq:loctimeeq}, for $t \geq 0$, $M \geq 2$, and $1 \leq k \leq M$, we have 
\begin{equation} \label{eq:Mloceq}
X_k^M(t) = x_k + B_k(t) + pL_{(k-1,k)}^M(t) - qL_{(k,k+1)}^M(t).
\end{equation}
Since $L_{(0,1)}^M \equiv 0$ by definition, taking $k = 1$ in \eqref{eq:Mloceq} and applying the result \eqref{eq:ptwiselim} yields that $\lim_{M \to \infty} L^M_{(1,2)}(t)$ exists and is finite for each fixed $t$. An induction argument then shows that, for each fixed $k$ and $t$, the limit
\begin{equation} \label{eq:L_ptw_lim}
L_{(k,k+1)}(t) := \lim_{M \to \infty} L_{(k,k+1)}^M(t) 
\end{equation}
exists and is finite. We define $L_{(0,1)} \equiv 0$.

We will next show that the limit \eqref{eq:L_ptw_lim} holds uniformly on compact intervals $[0,T]$. By \cite[Corollary 3.9]{AS2}, for any $2 \leq M \leq M'$, $1 \leq k \leq M$, and $0 < s \leq t$,
\begin{equation} \label{eq:comp1}
L_{(k,k+1)}^{M}(t) - L_{(k,k+1)}^{M}(s) \leq L_{(k,k+1)}^{M'}(t) - L_{(k,k+1)}^{M'}(s).
\end{equation}
Letting $M' \to \infty$, we obtain 
\begin{equation} \label{eq:comp2}
L_{(k,k+1)}^{M}(t) - L_{(k,k+1)}^{M}(s) \leq L_{(k,k+1)}(t) - L_{(k,k+1)}(s).
\end{equation}
Taking $s = 0$ in \eqref{eq:comp1} shows that 
$0 \le L_{(k,k+1)}(t) - L_{(k,k+1)}^M(t)$ for all $M \in \NN$, $1\le k \le M$ and $t\ge 0$.
This observation, combined with \eqref{eq:comp2}, yields
\begin{equation} \label{eq:comp3}
0 \leq L_{(k,k+1)}(s) - L_{(k,k+1)}^{M}(s) \leq L_{(k,k+1)}(t) - L_{(k,k+1)}^{M}(t).
\end{equation}
Since the right-hand side converges to zero, this bound shows that the limit \eqref{eq:L_ptw_lim} is uniform on compact intervals $[0,T]$. Note that by \eqref{eq:Mloceq}, this implies that the convergence \eqref{eq:ptwiselim} is uniform on compact time intervals as well.

We will now show that the process $\{X_k\}_{k \in \mathbb{N}}$, with associated local times $\{L_{(k,k+1)}\}_{k \in \mathbb{N}}$, is an infinite system of competing Brownian particles, in the sense of Definition \ref{def:cbp}. Since the limits \eqref{eq:ptwiselim} and \eqref{eq:L_ptw_lim} hold uniformly on compact intervals, continuity follows from continuity of the prelimiting processes $\{X_k^M\}$ and $\{L_{(k,k+1)}^M\}$. The fact that these processes are adapted to $\{\clf_t\}$ and that Properties \ref{pr:p1}-\ref{pr:p2} hold are also easy consequences of the fact that this limit holds and these properties hold for the prelimiting processes. 

The only part of Property \ref{pr:p3} which needs discussion is that the local times $L_{(k,k+1)}$ can only increase at times $t \in [0,\infty)$ when $X_{k+1}(t) = X_k(t)$. 
Fix $\omega \in \Omega$ for which the convergence of $X^M_k(\omega)$, $X^M_{k+1}(\omega)$ and $L^M_{(k,k+1)}(\omega)$ to limit values $X_k(\omega)$, $X_{k+1}(\omega)$ and $L_{(k,k+1)}(\omega)$ holds uniformly on compacts. We suppress $\omega$ from the notation.
Suppose that for some $0 \leq t_1 < t_2 < \infty$, $X_{k+1}(t) > X_k(t)$ for all $t \in [t_1,t_2]$. It suffices to show that $L_{(k,k+1)}(t) = 0$ for all $t \in [t_1,t_2]$.
By continuity, there exists $\varepsilon > 0$ such that $X_{k+1}(t) - X_k(t) \geq \varepsilon$ for all $t \in [t_1,t_2]$, and by uniform convergence, there exists $M_0 \in \mathbb{N}$ such that for all $M \geq M_0$,
$$
X_{k+1}^M(t) - X_k^M(t) \geq \varepsilon/2 \text{ for all } t \in [t_1,t_2].
$$
Then $L^M_{(k,k+1)}(t) = 0$ for all $t \in [t_1,t_2]$ and $M \geq M_0$. Taking the limit as $M \to \infty$ shows that $L_{(k,k+1)}(t) = 0$ for all $t \in [t_1,t_2]$, as desired. This completes Step 1.

\textbf{Step 2.} We will now extend this existence result to the case where the drift parameters $\bar{g} = \{g_j\}_{j \in \mathbb{N}}$ are allowed to be non-zero and lie in $\ell^2(\mathbb{N})$. It suffices to prove existence of solutions over compact time intervals $[0,T]$ for each $T>0$. Fix $T>0$ and define for $0\le t \le T$,
\[
Z(t) = \exp( \sum_{j = 1}^\infty g_j B_j(t) - \frac{1}{2}\sum_{j = 1}^\infty g_j^2 t).
\]
Since $\bar g \in \ell^2$, the first sum in the exponential  converges a.s.\ and $\{Z(t), 0 \le t \le T\}$ is a $\clf_t$-martingale with mean $1$.
Define a probability measure $\tilde \PP_T$ on $(\Omega,\mathcal{F}_T)$ by 
\[
\tilde \PP_T(A) = \mathbb{E}\left[ Z(T)\mathbf{1}_A \right], \;  A \in \clf_T.
\]
Note that, since $\bar x$ is $\clf_0$-measurable, $\PP \circ(\bar x)^{-1} = \tilde \PP_T\circ(\bar x)^{-1}$.
By Girsanov's theorem, under $\tilde \PP_T$,  $\{V_j\}_{j \in \mathbb{N}}$ is a sequence of i.i.d standard Brownian motions on $(\Omega,\mathcal{F}_T)$ and $\{V_i(t+s)- V_i(t), i \in \NN, 0 \le s \le T-t\}$ is independent of $\clf_t$ for all $t\in [0,T]$.
By  Step 1, there exists a strong solution $\{X_j\}_{j \in \mathbb{N}}$ to \eqref{eq:loctimeeq} (over $[0,T]$) with $g_j=0$, $j \in \NN$, driving Brownian motions $\{V_j\}$, and initial data $\bar x$, on the filtered probability space 
$(\Omega,\mathcal{F}_T, \{\clf_t\}_{0\le t \le T}, \tilde \PP_T)$. Since $\tilde \PP_T$ and $\PP$ (restricted to $\clf_T$) are mutually absolutely continuous, \eqref{eq:loctimeeq} is  also satisfied $\PP$-a.s. and we see that under $\PP$, $\{X_j\}_{j \in \mathbb{N}}$ defines a solution of \eqref{eq:loctimeeq} on $(\Omega,\mathcal{F}_T, \{\clf_t\}_{0\le t \le T}, \PP)$ with the given drift sequence $\bar g$, initial condition $\bar x$ and driving Brownian motions $\{B_j\}$. The result follows.
\end{proof}

\begin{acks}[Acknowledgments]
Banerjee was partially supported by NSF-CAREER award DMS-2141621.  Budhiraja was partially supported by NSF DMS-2152577 and DMS-2506010. Banerjee, Budhiraja and Rudzis were partially funded by NSF RTG grant DMS-2134107. Part of this work was carried out while Rudzis was a postdoctoral fellow at the University of North Carolina at Chapel Hill. Budhiraja would also like to thank the Isaac Newton Institute for Mathematical Sciences, Cambridge, for support and hospitality during the programme Stochastic systems for anomalous diffusion, where part of the work on this paper was undertaken. This  was supported by EPSRC grant EP/Z000580/1.
\end{acks}

\bibliographystyle{imsart-number} 
\bibliography{asym_ref}

\end{document}